\newtheorem{theorem}{Theorem}[section] 
\newtheorem{lemma}[theorem]{Lemma}
\newtheorem{corollary}[theorem]{Corollary}
\newtheorem{proposition}[theorem]{Proposition}
\newtheorem{algorithm}[theorem]{Algorithm}
\theoremstyle{definition}
\newtheorem{example}{Example}
\newtheorem{definition}{Definition}
\theoremstyle{remark}
\newtheorem{remark}{Remark}
\newtheorem*{ack}{Acknowledgements}
\begin{document}
\title[Branching laws and unitary GGP relevance]{Quotient branching laws and unitary Gan-Gross-Prasad relevance for general linear groups}
\author{Basudev Pattanayak}
\address{Department of Mathematics, Technion-Israel Institute of Technology, Haifa, Israel.}
\email{pbasudev93@gmail.com \& basudev@campus.technion.ac.il}
\subjclass{22E50}
\keywords{Branching laws, GGP relevance, derivative, integral, unitary representations}
\date{}
\begin{abstract}
This article addresses the quotient branching problem for the pair $(\operatorname{GL}_{n+1}(F), \operatorname{GL}_n(F))$ over a non-archimedean local field $F$. We present two primary contributions. First, we explicate Chan's recent general solution by providing a practical, step-by-step combinatorial algorithm to determine whether the space $\operatorname{Hom}_{\operatorname{GL}_n(F)}(\pi, \pi')$ is non-zero for any irreducible smooth representations $\pi$ and $\pi'$. This algorithm is designed to be verifiable by hand, given the representations' Langlands or Zelevinsky parameters. Second, we specialize to the unitary case, where we establish a unifying equivalence: a pair of irreducible unitary representations satisfies Chan's generalized GGP relevance criterion if and only if it satisfies a natural extension of the original Gan–Gross–Prasad relevance condition. This result resolves a previous inconsistency in the literature, corrects and extends prior work by Gurevich, and provides a complete, explicit criterion for unitary branching laws.
\end{abstract}
\maketitle
\tableofcontents
\section{Introduction}\label{intro}
Let $F$ be a non-archimedean local field. A fundamental problem in the representation theory of reductive groups over such fields is the branching law: given an irreducible smooth representation $\pi$ of a larger group $G$ and a subgroup $H$, how does $\pi$ decompose when restricted to $H$? Let $\pi$ be a smooth irreducible representation of the group $\mathrm{GL_{n+1}(F)}$. We consider $\mathrm{GL_{n}(F)}$ as a subgroup of $\mathrm{GL_{n+1}(F)}$, where the natural embedding 
\[
\mathrm{GL_{n}(F)} \hookrightarrow  \mathrm{GL_{n+1}(F)} \text{ is defined by } g \mapsto \left( \begin{array}{cc} g &  \\  & 1 \end{array} \right).
\]

The study of quotient branching laws predicts whether an irreducible smooth representation $\pi'$ of $\mathrm{GL_{n}(F)}$ appears as a quotient of the restriction $\pi|_{\mathrm{GL_{n}(F)}}$. This involves understanding the space $\mathrm{Hom_{GL_{n}(F)}}(\pi,\pi')$, where $\pi$ is viewed as a $\mathrm{GL_{n}(F)}$-representation via restriction. A landmark result by Aizenbud, Gourevitch, Rallis, and Schiffmann \cite{AGRS} established a strong multiplicity-one theorem:
\[
\mathrm{dim_{\mathbb{C}}~~Hom_{GL_{n}(F)}}(\pi,\pi') \leq 1,
\]
for all irreducible  smooth representations $\pi $ of $\mathrm{GL_{n+1}(F)}$, and $\pi' $ of $ \mathrm{GL_n(F)}$. Then, we have the fundamental question: when is $\mathrm{Hom_{GL_{n}(F)}}(\pi,\pi')$ non-zero? This question has been studied widely \cites{BZ2, JPSS, Prasad_duke, GGP1, GGP2, CS, Gur, Cha_crelle, Cha_qbl}. Further, there are some recent interesting studies \cites{CS, Cha_mz, Saad_imrn, WZ, CQ} of Ext-branching laws for $\mathrm{GL_{n}(F)}$ following the work \cite{Pra_icm} of Prasad.

While the general solution to this problem has recently been provided by Chan \cite{Cha_qbl} through the notion of a "generalized GGP relevant pair," the resulting criterion is intricate and not easily verifiable in practice. This article has two primary objectives. First, we aim to demystify this criterion by presenting a step-by-step, combinatorial algorithm that allows one to determine, in a finite number of steps, whether $\mathrm{Hom_{GL_{n}(F)}}(\pi,\pi') \ne 0$ for any given pair of irreducible smooth representations. Second, we specialize to the important case of unitary representations, where we establish a direct and elegant equivalence between Chan's general criterion and a natural extension of the original Gan–Gross–Prasad (GGP) relevance condition \cite{GGP2}. This resolves a previous inconsistency in the literature and provides a complete, unified theory for unitary branching laws.

\subsection{Extended Gan–Gross–Prasad relevance for unitary representations} In the mid-1980s, Tadi\'c \cite{Tad} provided a complete classification of the unitary dual of general linear groups. An irreducible unitary representation can be expressed as a product of unitary Speh and complementary series representations. For the branching problem, a natural conjecture, extending the original work of Gan, Gross, and Prasad \cite{GGP2} for Arthur-type representations, would be a simple criterion in terms of the parameters defining these unitary building blocks. Gurevich \cite{Gur} made significant progress in this direction, giving necessary conditions for the Hom space to be non-zero.

However, the original necessary conditions in \cite[Theorem 5.7]{Gur} are insufficient, as demonstrated by the following counterexample (detailed in Appendix \ref{app:gur}).

\begin{example}\label{ex}
Let $\rho$ and $\rho'$ are unitary cuspidal representations of $\mathrm{GL_{k}(F)}$ and $\mathrm{GL_{k-1}(F)}$ respectively. For real number $0 <\alpha <\frac{1}{2}$,  consider the unitary representations:
    \[\pi = \mathbb{1}_2\cdot\nu^{\frac{1}{2}-\alpha}  \times \mathbb{1}_2\cdot\nu^{-\frac{1}{2}+\alpha} \times \rho \text{ and }\pi' =  \mathbb{1}_2\cdot\nu^{\alpha} \times \mathbb{1}_2\cdot\nu^{-\alpha} \times \rho'.\]
Here, $\mathbb{1}_2$ is the trivial representation of 
$\mathrm{GL_{2}(F)}$. Using Chan's generalized GGP relevant criterion (see \cite[Definition 2.4 and Theorem 4.1]{Cha_qbl}), one can verify (taking $\mathfrak{m}=\nu^\frac{1}{2}\rho + \left[\frac{3}{2}-\alpha\right]$ and $\mathfrak{n}=\rho' + \left[-\frac{1}{2}-\alpha\right]$) that $\mathrm{Hom_{GL_{k+3}(F)}}(\pi,\pi') \neq 0$. Crucially, when $\alpha \ne \frac{1}{4}$, this pair does not satisfy the necessary conditions of \cite[Theorem 5.7]{Gur}. \qed
\end{example}

This example reveals that the original Gan-Gross-Prasad relevance \cites{GGP2, Gur, Cha_crelle} for Arthur-type representations needed to be extended to the full unitary picture. We propose such an extension, which directly modifies the conditions in \cite[Theorem 5.7]{Gur} by incorporating an additional, previously missing case.

Let $\pi, \pi'$ be two irreducible unitary representations of some general linear group. Following Tadić's classification, we can write them uniquely (up to ordering) as
\begin{equation}\label{eq:unitary}
\pi \cong   \prod\limits_{i=1}^r \pi_{\rho_i}(u_i, v_i)(\alpha_i)  \text{ and } 
    \pi' \cong \prod\limits_{j=1}^l \pi_{\rho_j'}(u_j', v_j')(\beta_j) \tag{$\star$}
\end{equation}
for some unitary Speh representations $\pi_{\rho_i}(u_i, v_i), \pi_{\rho_j'}(u_j', v_j')$, real numbers $0 \le \alpha_i, \beta_j <\frac{1}{2}$ and unitary cuspidal representations $\rho_i, \rho'_j$. Here, for an irreducible representation $\tau$ of $\mathrm{GL_{n}(F)}$ and a real number $0 \le \alpha <\frac{1}{2}$, we write
\[ \tau(\alpha)= 
\begin{cases}
     \tau\nu^\alpha\times \tau\nu^{-\alpha} &\mbox{ for } 0 < \alpha <\frac{1}{2}\\
     \tau &\mbox{ for } \alpha =0.
\end{cases}
\]

\begin{definition}[Gan–Gross–Prasad relevance]\label{def:extended_relevance}
The unitary representations $\pi $ and $ \pi'$ (as in \eqref{eq:unitary})  are called Gan–Gross–Prasad relevant if there are disjoint partitions
  \[\{1,...,r\}=I_1 \sqcup I_2 \sqcup I_3 \sqcup I_4 ,~~~\{1,...,l\}=J_1 \sqcup J_2 \sqcup J_3 \sqcup J_4,\] and three bijections $\lambda_1: I_1 \longrightarrow J_1$, $\lambda_2: I_2 \longrightarrow J_2$, and $\lambda_3: I_3 \longrightarrow J_3$ such that the following relations hold:
  \begin{align}
      \left(u'_{\lambda_1(i)},v'_{\lambda_1(i)}\right)&=\left(u_i,v_i+1 \right), && \beta_{\lambda_1(i)}=\alpha_i ,  &&\rho'_{\lambda_1(i)} \cong \rho_i~~ \text{ for all } i \in I_1\tag{R1}\\
      \left(u'_{\lambda_2(i)},v'_{\lambda_2(i)}\right)&=\left(u_i,v_i-1 \right), && \beta_{\lambda_2(i)}=\alpha_i,  &&\rho'_{\lambda_2(i)} \cong \rho_i ~~ \text{ for all } i \in I_2 \tag{R2}\\ 
      \left(u'_{\lambda_3(i)},v'_{\lambda_3(i)}\right)&=\left(u_i,v_i \right), && \beta_{\lambda_3(i)}=\frac{1}{2}-\alpha_i ,  &&\rho'_{\lambda_3(i)} \cong \rho_i~~ \text{ for all } i \in I_3 \tag{R3}
  \end{align}
  \begin{equation}
    v_i = 1 \text{ for all } i \in I_4, \text{ and }  v_j' = 1 \text{ for all } j \in J_4. \tag{R4}  
  \end{equation}\qed
\end{definition}
The key new condition is (R3), which allows a factor $\pi_\rho(u,v)(\alpha)$ from $\pi$ to be matched with a factor $\pi_\rho(u,v)(\frac{1}{2}-\alpha)$ from $\pi'$. This new case is precisely what accounts for the phenomenon in Example \ref{ex}. It is easy to observe that when both $\pi$ and $\pi'$ are Arthur-type representations, Definition \ref{def:extended_relevance} coincides with the original definition of Gan–Gross–Prasad relevance in \cite[Section 3]{GGP2} (see \cite[Definition 5.1]{Gur} for more details). Recently, Chen-Chen \cite{CC} established a necessary criterion for Gan–Gross–Prasad relevance for general linear groups over Archimedean local fields. The formulation of Definition \ref{def:extended_relevance} in the non-Archimedean setting is inspired by their work. We now state one of our main results for quotient branching of unitary representations in the non-Archimedean case.

\begin{theorem}[Unitary Branching Laws]\label{main_thm:unitary}
 Let $\pi$ and $\pi'$ be the irreducible unitary representations of $\mathrm{GL}_{n+1}(F)$ and $\mathrm{GL}_{n}(F)$ respectively. Then, \[\mathrm{Hom_{GL_{n}(F)}}(\pi, \pi') \neq 0\] if and only if  $\pi \text{ and }\pi'$  are Gan–Gross–Prasad relevant (Definition \ref{def:extended_relevance}).   
\end{theorem}
This result not only provides a necessary and sufficient condition but also corrects and extends the earlier work of Gurevich \cite{Gur}. In particular, it gives an alternative proof of the local non-tempered Gan–Gross–Prasad conjecture \cite[Conjecture 5.1]{GGP2} for $p$-adic general linear groups.

\subsection{Notion of generalized GGP relevant} 
To analyze the branching problem, Chan's work \cite{Cha_qbl} relies on an interplay between two operations: {\it derivatives}, which extract smaller representations by taking Jacquet-module along a unipotent subgroup, and {\it integrals}, which build larger representations via parabolic induction. The $\eta$-invariant packages the outcome of successive derivatives.

Consider an irreducible smooth representation $\pi$ of $\mathrm{GL}_n(F)$ and an essentially square integrable representation $\sigma$ (a generalized Steinberg representation $\mathrm{St}(\Delta)$ associated to a segment $\Delta$) of $\mathrm{GL}_\ell(F)$ for $\ell < n$. Let $N_{n-\ell, \ell} \subset \mathrm{GL}_n(F)$ be the unipotent radical of the standard parabolic subgroup corresponding to the partition $(n-\ell, \ell)$ of $n$.

\subsubsection{Derivatives:}By \cite{LM16}, there exists at most one irreducible representation $\tau$ of $\mathrm{GL}_{n-\ell}(F)$ such that 
\begin{equation*}\label{def:right_der}
	\tau \otimes \mathrm{St}(\Delta)   \hookrightarrow \mathrm{Jac}_{N_{n-\ell, \ell}}(\pi),   
\end{equation*}
where $\mathrm{Jac}_{N_{n-\ell, \ell}}(\pi)$ is the normalized Jacquet module of $\pi$ associated to $N_{n-\ell, \ell}$. If such $\tau$ exists, we call $\tau$ is the {\it right derivative} $\mathrm{D}^\mathrm{R}_{\Delta}(\pi)$ of $\pi$ under $\mathrm{St}(\Delta)$. If no such $\tau$ exists, we set $\mathrm{D}^\mathrm{R}_{\Delta}(\pi)=0$. Similarly, there exists at most one irreducible smooth representation $\tau^\prime$ of $\mathrm{GL}_{n-\ell}(F)$ such that $\mathrm{St}(\Delta) \otimes \tau^\prime   \hookrightarrow \mathrm{Jac}_{N_{\ell, n-\ell}}(\pi)$.  The {\it left derivative} $\mathrm{D}^\mathrm{L}_{\Delta}(\pi)$ is defined similarly. One can iterate this process and define an invariant $\varepsilon^\mathrm{R}_\Delta(\pi)$ to be the largest non-negative integer $k$ such that $\left(\mathrm{D}^\mathrm{R}_{\Delta}\right)^k(\pi)\neq 0$.

\subsubsection{$\eta$-invariant:} For a segment $[a,b]_\rho$, we define the $\eta$-invariant for $\pi$ by the following tuple of non-negative integers:
\[\eta_{[a,b]_\rho}(\pi)=\left( \varepsilon^\mathrm{R}_{[a,b]_\rho}(\pi), \varepsilon^\mathrm{R}_{[a+1,b]_\rho}(\pi),...,\varepsilon^\mathrm{R}_{[b,b]_\rho}(\pi)\right).\]

\subsubsection{Integrals:}Let $\pi$ be any irreducible representation and $\sigma$ be a square-irreducible representation. Then, the normalized parabolic induction $\pi \times \sigma$  (resp. $\sigma \times \pi$) has a unique simple submodule (see \cites{LM16, LM18} for more details). We denote $\mathrm{I}^\mathrm{R}_{\Delta}(\pi)$ (resp. $\mathrm{I}^\mathrm{L}_{\Delta}(\pi)$ for the unique simple submodule of $\pi \times \mathrm{St}(\Delta)$ (resp. $\mathrm{St}(\Delta) \times \pi$), and called the right (resp. left) integral of $\pi$ under $\mathrm{St}(\Delta)$.

\subsubsection{RdLi-commutativity:} 
The central notion of a strongly RdLi-commutative triple—where RdLi stands for right derivative and left integral—ensures that the operations of taking right derivatives and left integrals can be performed in any order without affecting the final result. We avoid defining the original definition of a tuple $(\Delta, \Delta', \pi)$ to be a {\it strongly RdLi-commutative triple}. We refer the reader to see \cite[Definition 2.2]{Cha_qbl} or \cite[Definition 1.1]{Cha_duality}. Applying \cite[Theorem 1.5]{Cha_duality}, its combinatorial formulation is as follows:
\begin{definition}[Combinatorially RdLi-commutative triple]\label{def:RdLi_comb}
  Let $\Delta, \Delta'$ be two segments and $\pi$ be an irreducible smooth representation of $\mathrm{GL}_n(F)$. The tuple $(\Delta, \Delta', \pi)$ is called a {\it combinatorially RdLi-commutative triple} if 
\[\mathrm{D}^\mathrm{R}_\Delta(\pi) \neq 0  \text{ and } \eta_\Delta\left(\mathrm{I}^\mathrm{L}_{\Delta'}(\pi)\right)=\eta_\Delta(\pi).\]
\end{definition}

For multisegments $\mathfrak{m}$ and $ \mathfrak{n}$, we define strongly RdLi-commutative triple for $(\mathfrak{m}, \mathfrak{n}, \pi)$ using the combinatorially RdLi-commutative triple and \cite[Theorem 1.5]{Cha_duality}. For the multisegment $\mathfrak{m}=\Delta_1+\cdots+\Delta_r$, we write the segments in $\mathfrak{m}$ in an ascending order by $\Delta_j \nprec \Delta_i$ for $i <j$. We define
\begin{align*}
  \mathrm{D}^\mathrm{R}_\mathfrak{m}(\pi) = \mathrm{D}^\mathrm{R}_{\Delta_r}\circ...\circ \mathrm{D}^\mathrm{R}_{\Delta_1}(\pi), &\text{ and } \mathrm{D}^\mathrm{L}_\mathfrak{m}(\pi) = \mathrm{D}^\mathrm{L}_{\Delta_1}\circ...\circ \mathrm{D}^\mathrm{L}_{\Delta_r}(\pi),\\
  \mathrm{I}^\mathrm{L}_\mathfrak{m}(\pi) = \mathrm{I}^\mathrm{L}_{\Delta_r}\circ...\circ \mathrm{I}^\mathrm{L}_{\Delta_1}(\pi), &\text{ and } \mathrm{I}^\mathrm{R}_\mathfrak{m}(\pi) = \mathrm{I}^\mathrm{R}_{\Delta_1}\circ...\circ \mathrm{I}^\mathrm{R}_{\Delta_r}(\pi).
\end{align*}

\begin{definition}[Strongly RdLi-commutative triple]\label{def:RdLi}
  Let $\mathfrak{m}=\Delta_1+\cdots+\Delta_r$ and $\mathfrak{n}=\Delta_1'+\cdots+\Delta_s'$  be two multisegments in ascending order. Write $\mathfrak{m}_i=\Delta_1+\cdots+\Delta_i$ and $\mathfrak{n}_j=\Delta_1'+\cdots+\Delta_j'$. Then, the tuple $(\mathfrak{m}, \mathfrak{n}, \pi)$ is called a {\it strongly RdLi-commutative triple} if the tuple $\left(\Delta_{i+1},\Delta_{j+1}', \mathrm{I}^\mathrm{L}_{\mathfrak{n}_j} \circ \mathrm{D}^\mathrm{R}_{\mathfrak{m}_i}(\pi)\right)$ is a combinatorially RdLi-commutative triple for any $0 \le i \le r-1$ and $0 \le j \le s-1$. \qed
\end{definition}
\subsubsection{Generalized GGP relevant:}We now recall the notion of a generalized GGP relevant pair as defined in \cite{Cha_qbl}. This notion plays the central role in general quotient branching. Let $\nu$ be the character $|\cdot|_F \circ \mathrm{det}$ of general linear groups, where $|\cdot|_F$ denotes the non-archimedean absolute value.
\begin{definition}[Generalized GGP relevant pair]\label{def:relevant}
Let $\pi$ and $\pi'$ be irreducible smooth complex representations of $\mathrm{GL}_{k}(F)$ and $\mathrm{GL}_{k'}(F)$ respectively. The pair $(\pi, \pi')$ is called a generalized GGP relevant pair if there exist multisegments $\mathfrak{m}$ and $\mathfrak{n}$ such that:
\begin{enumerate}
    \item Derivative Matching: $\mathrm{D}^\mathrm{R}_\mathfrak{m}(\nu^{1/2}\pi) \cong \mathrm{D}^\mathrm{L}_\mathfrak{n}(\pi')$,
    \item Commutativity: $\left(\mathfrak{m},\mathfrak{n} ,\nu^{1/2}\pi\right)$ is a strongly RdLi-commutative triple.
\end{enumerate} 
\end{definition}
The main result \cite[Theorem 4.1]{Cha_qbl} of Chan \cite{Cha_qbl} says that if $\pi$ and $\pi'$ are the irreducible smooth complex representations of $\mathrm{GL}_{n+1}(F)$ and $\mathrm{GL}_{n}(F)$ respectively, then $\mathrm{Hom_{GL_{n}(F)}}(\pi, \pi') \neq 0$  if and only if $(\pi,\pi')$  is a generalized GGP relevant pair. This provides a complete theoretical answer, but the definition itself is non-constructive, leaving open the question of how to find such $\mathfrak{m,n}$ and verify the conditions for a given pair.

\subsection{An algorithm for general branching laws}
The central contributions of this paper are twofold. First, we provide a practical solution to the general branching problem by demonstrating an easy (can be computed by hand) algorithm (see Algorithm \ref{alg:relevant}) to determine the generalized relevant pairs and hence the quotient branching laws in $\mathrm{GL}_n(F)$. 

Now we describe the main idea of the algorithm for determining the generalized GGP relevant pairs. It is a few-step procedure. Roughly, the algorithm (see Algorithm \ref{alg:relevant} for explicit description) proceeds as follows: {\it for a given pair $(\pi, \pi')$ of irreducible (non-generic) smooth representations,
\begin{enumerate}
    \item (Reduction): we either reduce it to a pair with smaller cuspidal support by applying a right derivative to $\pi$ or a left derivative to $\pi'$, or we swap the pair if necessary for reduction.
    \item (Generic base case): Repeating the above reduction or interchange-cum-reduction process eventually yields a pair $(\pi_*, \pi_*')$ of generic representations. For such a pair, the conditions for a generalized GGP relevant pair are automatically satisfied, and we can explicitly compute the minimal multisegments $\mathfrak{m}_*$ and $\mathfrak{n}_*$ by inspection.
    \item (Reverse Admissibility): Working backwards through the reduction steps, we check a specific non-vanishing condition (admissibility) for the relevant derivatives. If this condition holds at every step, then the original pair is relevant.
\end{enumerate}
}
\begin{theorem}[see Theorem \ref{thm:main}]
    The pair $(\pi, \pi')$ is a generalized GGP relevant pair if and only if the admissibility condition holds in each reduction step of the above algorithm (Algorithm \ref{alg:relevant}). 
\end{theorem}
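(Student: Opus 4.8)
The plan is to establish the equivalence by unwinding the definition of generalized GGP relevance (Definition \ref{def:relevant}) and showing that the existence of witnessing multisegments $\mathfrak{m}, \mathfrak{n}$ can be tracked through the reduction procedure. First, I would set up the two directions cleanly. For the ``only if'' direction, suppose $(\pi, \pi')$ is a generalized relevant pair, so there exist $\mathfrak{m}, \mathfrak{n}$ with $\mathrm{D}^\mathrm{R}_\mathfrak{m}(\nu^{1/2}\pi) \cong \mathrm{D}^\mathrm{L}_\mathfrak{n}(\pi')$ and $(\mathfrak{m}, \mathfrak{n}, \nu^{1/2}\pi)$ strongly RdLi-commutative. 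The key structural input I would invoke is that among all such witnessing pairs one may choose $\mathfrak{m}$ to be Rd-minimal and $\mathfrak{n}$ to be Ld-minimal in a suitable sense; and that the highest-derivative multisegments $\mathfrak{hd}^\mathrm{R}(\pi)$, $\mathfrak{hd}^\mathrm{L}(\pi')$ (or their analogues after the interchange) must appear as ``initial segments'' of any minimal witnessing $\mathfrak{m}, \mathfrak{n}$. This is the heart of the matter: the reduction step of Algorithm \ref{alg:relavant} strips off exactly these pieces, so I need to argue that a relevant pair stays relevant after one reduction step (the admissibility condition being automatically inherited) and, crucially, conversely.

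Second, I would handle the ``if'' direction, which is the constructive half: assume the admissibility condition holds in every reduction step. I would run the algorithm forward to the terminal generic pair $(\pi_*, \pi'_*)$. For a pair of generic representations (multisegments in $\mathrm{Mult}^\mathrm{ul}$), I need a base case: generic representations $L(\mathfrak{m}_*)$, $L(\mathfrak{n}_*)$ of successive general linear groups are always generalized GGP relevant with explicitly constructible RdLi-multisegments $\mathfrak{m}, \mathfrak{n}$ — this should follow from the classical branching law for generic representations (Zelevinsky, or the $\mathrm{GL}$ case of GGP for generic data), together with the fact that derivatives and integrals behave transparently on generic representations so that the RdLi-commutativity is automatic. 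Then I would lift the witnessing multisegments backward through each reduction step: given $\mathfrak{m}_{i+1}, \mathfrak{n}_{i+1}$ witnessing relevance of $(\pi_{i+1}, \pi'_{i+1})$, I prepend the highest-derivative multisegments from step $i$ to obtain $\mathfrak{m}_i, \mathfrak{n}_i$, and use the admissibility condition precisely to guarantee $\mathrm{D}^\mathrm{R}_{\mathfrak{m}_i}(\nu^{1/2}\pi_i) \neq 0$ and that the strong RdLi-commutativity of the longer triple holds — the commutativity at the newly prepended segments being exactly the combinatorial RdLi-commutativity encoded in Definition \ref{def:RdLi_comb}, which one checks via the $\eta$-invariant computation using the structure of highest derivatives.

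The main obstacle I anticipate is the backward-compatibility of strong RdLi-commutativity under prepending: Definition \ref{def:RdLi} requires that $(\Delta_{i+1}, \Delta'_{j+1}, \mathrm{I}^\mathrm{L}_{\mathfrak{n}_j} \circ \mathrm{D}^\mathrm{R}_{\mathfrak{m}_i}(\pi))$ be combinatorially RdLi-commutative for \emph{all} pairs $(i,j)$, so when I enlarge $\mathfrak{m}$ and $\mathfrak{n}$ by prepending the highest-derivative multisegments, I must verify commutativity for the new cross-terms — those where one of $i, j$ indexes a prepended segment and the other an ``old'' segment. This requires knowing how left integrals interact with the already-stripped right derivatives, and this is where I expect to need the RdLi-commutativity results of \cite{Cha_duality} (the equivalence of combinatorial and strong notions) together with a careful analysis of the $\eta$-invariant of highest derivatives — likely packaged as a lemma (the analogue of Lemma \ref{lem:example}) stating that highest-derivative multisegments are ``compatible'' with subsequent integrals in the precise sense needed. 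I would also need to separately address the interchange step: when the algorithm swaps $(\pi, \pi')$ to $(\pi', \pi)$, I must check that generalized GGP relevance is symmetric in the appropriate sense (it is not literally symmetric because of the $\nu^{1/2}$ twist and the asymmetry $\mathrm{D}^\mathrm{R}$ versus $\mathrm{D}^\mathrm{L}$, so this requires the duality/contragredient compatibility of the relevance condition), and that admissibility is correctly reinterpreted after the swap. Once these two lemmas — backward compatibility of strong RdLi-commutativity, and symmetry under interchange — are in place, the theorem follows by induction on the number of reduction steps, with the generic base case as above.
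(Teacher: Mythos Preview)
Your proposal is correct and follows essentially the same approach as the paper: the paper's proof of Theorem~\ref{thm:main} is a one-line citation of Proposition~\ref{prop:left_reduction}, Proposition~\ref{prop:right_reduction}, Proposition~\ref{prop:interchange}, and Proposition~\ref{prop:generic}, which are precisely the four ingredients you identify --- the forward and backward reduction lemmas (your ``relevance descends'' and ``backward lifting under admissibility''), the symmetry under interchange, and the generic base case. The obstacle you flag about strong RdLi-commutativity surviving the prepending of highest-derivative multisegments is exactly what part~(B) of Propositions~\ref{prop:left_reduction} and~\ref{prop:right_reduction} packages, with the hard work deferred to \cite[Section~5.2]{Cha_qbl}; and the interchange symmetry you worry about is handled by Proposition~\ref{prop:interchange}, resting on \cite[Theorem~18.1]{Cha_qbl}.
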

Through this algorithm, we can determine whether $\mathrm{Hom_{GL_{n}(F)}}(\pi, \pi') \neq 0$ for any irreducible smooth complex representations $\pi$ and $\pi'$ of $\mathrm{GL}_{n+1}(F)$ and $\mathrm{GL}_{n}(F)$ respectively, provided that either Zelevinsky data or Langlands data (in terms of multisegment) of these representations are known. We have demonstrated a few non-trivial examples in section \ref{sec:example}.

\subsection{Equivalence of both notions}
For the special case of unitary representations, we prove a unifying equivalence that ties together the various formulations of the GGP relevant pair.
\begin{theorem}[Equivalence]\label{thm:unitary}
Let $\pi$ and $\pi'$ be two irreducible unitary representations of general linear groups. Then,
$(\pi, \pi')$ is a generalized GGP relevant pair (Definition \ref{def:relevant}) if and only if  $\pi$ and $\pi'$  are Gan–Gross–Prasad relevant (Definition \ref{def:extended_relevance}). 
\end{theorem}
This theorem bridges the abstract, general criterion with a concrete, parameter-based condition, making it a powerful tool for explicit computations. Combining this with Chan's result \cite[Theorem 4.1]{Cha_qbl} yields a clean and complete criterion for quotient branching laws for all unitary representations in Theorem \ref{main_thm:unitary}.

%

%
\begin{ack}
 The author would like to thank Kei Yuen Chan for his continuous suggestions and insightful discussions. This work is supported by the Research Grants Council of the Hong Kong Special Administrative Region, China (Project No: 17305223) and the NSFC grant for Excellent Young Scholar (Project No: 12322120) awarded to Kei Yuen Chan. The author is also grateful to Mohammed Saad Qadri for many helpful discussions. Finally the author would like to thank Maxim Gurevich for his encouragement and for contributing an Appendix to the article. 
\end{ack}

%
\section{Notations and preliminaries}\label{prelim}
Let $F$ be a non-archimedean local field with normalized
 absolute value $|\cdot|_F$. For every integer $n \geq 0$, let $G_n=\mathrm{GL}_n(F)$ be the rank $n$ general linear group over $F$, where $G_0$ is considered as the trivial group. The character $\nu_n:G_n \rightarrow \mathbb{C}^\times$ is defined by $\nu_n(g)=|\mathrm{det}(g)|_F$ for $g \in G_n$. For any integer $n \geq 0$, let $\mathrm{Rep}(G_n)$ be the category of smooth complex representations of $G_n$ and let $\mathrm{Irr}(G_n)$ be the set of irreducible objects of $\mathrm{Rep}(G_n)$ up to equivalence. For every integer $n \geq 1$, let $\mathrm{Irr^{cusp}}(G_n)$ be the set of irreducible supercuspidal representations of $G_n$ and $\mathrm{Irr^{unit}}(G_n)$ be the set of irreducible unitary representations of $G_n$. We set
 \[ ~\mathrm{Irr}=\bigsqcup\limits_{n\geq 0}\mathrm{Irr}(G_n), ~\mathrm{Irr^{cusp}}=\bigsqcup\limits_{n \geq 1}\mathrm{Irr^{cusp}}(G_n), \text{ and } \mathrm{Irr^{unit}}=\bigsqcup\limits_{n \geq 1}\mathrm{Irr^{unit}}(G_n).\]
Let $P_n=M_nN_n$ be a standard parabolic subgroup of $G_n$, where the Levi subgroup $M_n$ is isomorphic to $G_{n_1} \times \cdots \times G_{n_r}$ for some composition $n=n_1+\cdots+n_r$ of $n$. Let $\pi_i$ be a smooth representation of $G_{n_i}$ for $1 \leq i \leq r$ and let $\pi$ denote a smooth representation of $G_n$. The normalized parabolic-induced representation is denoted by $\pi_1 \times \cdots \times \pi_r = \mathrm{Ind}^{G_n}_{P_n} (\pi_1 \boxtimes \cdots \boxtimes \pi_r),$ and the normalized Jacquet module of $\pi$ with respect to $P_n$ is denoted by $\mathrm{Jac}_{N_n}(\pi)=\delta_n^{-\frac{1}{2}}\cdot \pi/\mathrm{span}\{ x \cdot v -v \mid x \in N, v \in \pi\}$, where $\delta_n$ is the modular character of $P_n$. We denote $n_\pi=n$ for $\pi \in \mathrm{Rep}(G_n)$.

\subsection{Cuspidal support}\label{sec:usp:supp} For each $\pi \in \mathrm{Irr}$, there exists a unique multiset $\{\rho_1,...,\rho_r\} \subset \mathrm{Irr^{cusp}}$ such that $\pi$ is a simple composition factor of $\rho_1 \times...\times\rho_r$. This multiset is called the cuspidal support of $\pi$ and is denoted $\mathrm{csupp}(\pi)$. For $\pi_1, \pi_2 \in \mathrm{Irr}$, the induced representation $\pi_1 \times \pi_2$ is irreducible unless there exists $\rho \in \mathrm{csupp}(\pi_1)$ such that either $\nu\rho$ or $\nu^{-1}\rho$ lies in $\mathrm{csupp}(\pi_2)$. We define an order between two cuspidal representations $\rho, \rho'$ as $\rho < \rho'$ (resp. $\rho = \rho'$ or $\rho > \rho'$) if there exists a non-negative number $a$ such that $\rho'=\nu^a\rho$ (resp. $\rho = \rho'$ or $\rho =\nu^a \rho'$). We define the cuspidal line $\rho$ by $\rho[\mathbb{Z}]=\{\nu^i \rho \mid i \in \mathbb{Z}\}$. Then, $\mathrm{Irr}_\rho$ consists of those $\pi \in \mathrm{Irr}$ such that $\mathrm{csupp}(\pi) \subset \rho[\mathbb{Z}]$.

\subsection{The notions of segment and multisegment \cite{Zel}}\label{sec:seg}
Let $a,b \in \mathbb{R}$ such that $b-a \in \mathbb{Z}_{\geq 0}$ and let $\rho \in \mathrm{Irr^{cusp}}(G_k)$. A segment in the cuspidal line $\rho$ is denoted either by $\Delta = [a,b]_\rho$, which is essentially the set $\{ \nu^a \rho, \nu^{a+1}\rho, ..., \nu^b \rho \}$ with the character $\nu=\nu_k$  or by a void set. We write  $[a,a]_\rho$ simply as $[a]_\rho$. Let $\mathrm{Seg}$ denote the set of all segments and $\mathrm{Seg}_\rho$ the set of segments in the cuspidal line $\rho$. For convenience, we set $[a,a-1]_\rho=\emptyset$ for $a \in \mathbb{R}$.  For a segment $\Delta = [a,b]_\rho$, the beginning (or start) is $s(\Delta)= \nu^a \rho$  with $s_\rho(\Delta)=a$, and the ending is $e(\Delta)=\nu^b \rho$ with $e_\rho(\Delta)=b$. The absolute length of $\Delta = [a,b]_\rho$ is denoted by $\ell_{abs}(\Delta)=(b- a + 1)n_\rho$. By convention, the length of the void segment is $0$. 

Two segments $\Delta, \Delta' \in \mathrm{Seg}$ are called {\it linked} if there exists supercuspidal $\rho \in \mathrm{Irr}^c$ and $a, b, a', b' \in \mathbb{R}$ with mutually integer difference such that $\Delta=[a,b]_\rho$, $\Delta'=[a',b']_\rho$ and one of the following holds: 
    \[a <a' \leq b+1 <b'+1 \text{ or }  a' < a \leq b'+1 < b+1.\]
If two segments are not linked, they are called unlinked. In particular, segments lying in distinct cuspidal lines are unlinked. For two linked segments $\Delta=[a,b]_\rho$ and $\Delta^\prime=[a^\prime,b^\prime]_\rho$, we say that $\Delta$ precedes $\Delta^\prime$, denoted $\Delta \prec \Delta^\prime$, if $a < a^\prime$, $b < b^\prime$ and $a^\prime \leq b+1$. 

For a non-void segment $\Delta = [a,b]_\rho$, we define $\overset{\rightarrow}{\Delta}=[a+1,b+1]_\rho,~ \overset{\leftarrow}{\Delta}=[a-1,b-1]_\rho,~   \Delta^+ =[a,b+1]_\rho,~\Delta^- = [a,b-1]_\rho,~ {^+}\Delta =[a-1,b]_\rho,$  and $ {^-}\Delta = [a+1,b]_\rho$ with
the convention that $\Delta^-$ and ${^-}\Delta$ are void if $a=b$. Similarly, $\nu^\alpha \Delta = [a+\alpha, b+\alpha]_\rho \text{ for } \alpha \in \mathbb{R}$.

A multisegment is a multiset $\mathfrak{m}=\{\Delta_1,...,\Delta_r\}$ of non-void segments and is represented as $\mathfrak{m}=\Delta_1+\cdots +\Delta_r$. Let $\mathrm{Mult}$ be the set of all multisegments and $\mathrm{Mult}_\rho$ be the set of those multisegments consisting of segments in the cuspidal line $\rho$. The absolute length of a multisegment $\mathfrak{m} \in \mathrm{Mult}_\rho$ is defined by $\ell_{abs}(\mathfrak{m})=\sum_{\Delta \in \mathfrak{m}}\ell_{abs}(\Delta)$with the convention that the empty multisegment has length $0$. The support of a multisegment
$\mathfrak{m}$ is the multiset of integers obtained by taking the union (with multiplicities) of the segments in $\mathfrak{m}$. For two multisegments $\mathfrak{m}, \mathfrak{m}^\prime \in \mathrm{Mult}$, we write $\mathfrak{m} + \mathfrak{m}^\prime$ for the union $\mathfrak{m} $ and $\mathfrak{m}^\prime$ counting multiplicities. For a segment $\Delta$, we set $\mathfrak{m}+\Delta=\mathfrak{m}+\{\Delta\}$ if $\Delta \neq \emptyset$, and $\mathfrak{m}+\Delta=\mathfrak{m}$ if $\Delta=\emptyset$. Similarly, we define $\mathfrak{m} - \mathfrak{m}^\prime$ and $\mathfrak{m}-\Delta$. For $\mathfrak{m} \in \mathrm{Mult}_\rho$, $\rho \in \mathrm{Irr^{cusp}}$, and $x \in \mathbb{R}$, we define the following multisets: $\mathfrak{m}[x] =\left\{[a,b]_\rho \in \mathfrak{m} \mid a=x \right\}$,
\begin{align*}
   \mathfrak{m}\left\langle x \right\rangle =\left\{[a,b]_\rho \in \mathfrak{m} \mid b=x \right\}, \text{ } \mathfrak{m}_{s=\rho}=\left\{\Delta \in \mathfrak{m} \mid s(\Delta) \cong \rho \right\}, \text{ and } \mathfrak{m}_{e=\rho}=\left\{\Delta \in \mathfrak{m} \mid e(\Delta) \cong \rho \right\}. 
\end{align*}
For a real number $0 \le \alpha <\frac{1}{2}$ and $\mathfrak{m} \in \mathrm{Mult}$, further we write
\[ \mathfrak{m}(\alpha)= 
\begin{cases}
     \mathfrak{m}\nu^\alpha + \mathfrak{m}\nu^{-\alpha} &\mbox{ for } 0 < \alpha <\frac{1}{2}\\
     \mathfrak{m} &\mbox{ for } \alpha =0.
\end{cases}
\]
\subsection{Langlands and Zelevinsky classification}\label{sec:LZ_classification}
For a segment $\Delta=[a,b]_\rho$ with $\rho \in \mathrm{Irr^{cusp}}(G_k)$, the normalized parabolic-induced representation $\nu^a \rho \times \nu^{a+1}\rho \times \cdots \times \nu^b \rho $ has a unique irreducible submodule, denoted by  $Z (\Delta)$, and has a unique irreducible quotient, denoted by $L(\Delta)$. The representation $L(\Delta)$ is often called the generalized Steinberg representation and is also denoted by $\mathrm{St}(\Delta)$. 

For the Zelevinsky classifications, we consider an ordered multisegment $\mathfrak{m}=\Delta_1+\Delta_2+ \cdots+\Delta_r$ with the property $\Delta_i \nprec \Delta_j$ for $i < j$. Then, the normalized parabolic-induced representation $Z(\Delta_1) \times Z(\Delta_2) \times \cdots \times Z(\Delta_r)$ has a unique irreducible submodule, which we denote by $Z(\mathfrak{m})$. Zelevinsky \cite{Zel} proved that every irreducible smooth representation $\pi$ of $G_n$ is isomorphic to $Z(\mathfrak{m})$ for a unique multisegment $\mathfrak{m}$.

For the Langlands classifications, let $\mathfrak{m}=\Delta_1+\Delta_2+ \cdots+\Delta_r$ be an ordered multisegment with the property $\Delta_i \nprec \Delta_j$ for $i > j$. Then, the induced representation $L(\Delta_1) \times L(\Delta_2) \times  \cdots \times L(\Delta_r)$ has a unique irreducible subrepresentation, which we denote by $L(\mathfrak{m})$. Further, every irreducible smooth representation $\pi$ of $G_n$ is isomorphic to $L(\mathfrak{m})$ for a unique multisegment $\mathfrak{m}$.

The two classifications are related by an involution  $\mathfrak{m} \mapsto \mathfrak{m}^\#$ on $\mathrm{Mult}$, characterized by the property $Z(\mathfrak{m}) \cong L(\mathfrak{m}^\#)$. Mœglin and Waldspurger \cite{MW} provide an algorithm to compute the multisegment $\mathfrak{m}^\#$ associated to each multisegment $\mathfrak{m} \in \mathrm{Mult}$ such that $Z(\mathfrak{m}) \cong L(\mathfrak{m}^\#) \text{ and } L(\mathfrak{m}) \cong Z(\mathfrak{m}^\#).$

For $\mathfrak{m} \in \mathrm{Mult}_\rho$, we denote $e(\mathfrak{m})=\mathrm{max} \left\{e(\Delta): \Delta \in \mathfrak{m}\right\}$ and $s(\mathfrak{m})=\mathrm{min} \left\{s(\Delta): \Delta \in \mathfrak{m}\right\}$ under the ordering as defined in \S \ref{sec:usp:supp}. We have $e(\mathfrak{m}^\#)=e(\mathfrak{m})$ and $s(\mathfrak{m}^\#)=s(\mathfrak{m})$. For $\pi \in \mathrm{Irr}_\rho$, there exists $\mathfrak{m} \in \mathrm{Mult}_\rho$ such that $\pi \cong L(\mathfrak{m})$ and we define \[s(\pi)=s(\mathfrak{m}) \text{ and } e(\pi)=e(\mathfrak{m}).\] 
\subsection{Ladder and Speh representations} Let $\rho \in \mathrm{Irr}^\mathrm{cusp}(\mathrm{GL}_k(F))$ and fix the cuspidal line $\rho$. A multisegment $\mathfrak{m}=[a_1, b_1]_\rho+...+[a_t, b_t]_\rho$is called a {\it ladder} if its segments satisfy the strict inequality \[a_1 < a_2 <... <a_t \text{ and }b_1 < b_2 <... < b_t.\] The unique irreducible representation $L(\mathfrak{m})$ obtained via Langlands classification is then called a {\it ladder representation}. Ladder representations form a fundamental class that includes many important families, such as the generalized Steinberg representations (when $t=1$) and the Speh representations described below.

A ladder of the special form \[\mathfrak{m}=\sum\limits_{i=0}^h [a+i, b+i]_\rho\] where $a,b \in \mathbb{R}$ with $b-a, h \in \mathbb{Z}_{\geq 0}$, is called Speh multisegment. The corresponding irreducible representation $L(\mathfrak{m})$ is called a (shifted) Speh representation.

\subsection{Unitary dual and Tadi\'c classifications}\label{sec:tadic}
Let  $\rho \in \mathrm{Irr^{cusp}}(\mathrm{G_k})$ be a unitary cuspidal representation.  For a positive integers $u$, define the segment \[\Delta_\rho(u)=\left[-\frac{u-1}{2}, \frac{u-1}{2}\right]_\rho\]
For a pair of positive integers $u,v$, we form the multisegment \[\mathfrak{m}_\rho(u,v)=\nu^{-\frac{v-1}{2}} \Delta_\rho(u) + \nu^{-\frac{v-1}{2}+1}\Delta_\rho(u)+...+\nu^{\frac{v-1}{2}} \Delta_\rho(u).\] Then, the irreducible representation $\pi_\rho(u,v)=L(\mathfrak{m}_\rho(u,v))$ (via Langlands classification) is called a {\it unitary Speh} representation. Every unitary Speh representation arises in this way, and these representations are precisely the irreducible unitary representations that are square-integrable modulo the center \cite{Tad}. 

A key property, due to Bernstein \cite{Ber}, is that for any two irreducible unitary representations $\pi, \pi'$, the normalized parabolic induction $\pi \times \pi'$ is again irreducible. This allows us to form products of unitary representations without encountering issues of reducibility.

An irreducible unitary representation $\pi$ is called {\it Arthur-type} if it can be written as a product \[\pi \cong \pi_1 \times ...\times \pi_r\] where each $\pi_i$ is a unitary Speh representation. Arthur-type representations form the building blocks of the unitary dual for general linear groups over non-archimedean local fields.

Beyond Arthur-type representations, the unitary dual contains complementary series representations. Let $\pi=L(\mathfrak{m})$ be an unitary Speh representation of $\mathrm{G_n}$, and let $\alpha$ be a real number with $0 < \alpha <\frac{1}{2}$. We can attach to $(\pi, \alpha)$ an irreducible unitary representation, called the {\it complementary series representation}, and defined by
\[\pi(\alpha):= \pi\nu^\alpha\times \pi\nu^{-\alpha}= L\left(\nu^{\alpha} \mathfrak{m} + \nu^{-\alpha}\mathfrak{m} \right).\]  
The complete classification of the unitary dual for general linear groups over non-archimedean local fields is due to Tadić \cite{Tad}. He proves that every irreducible unitary representation $\pi$ of $\mathrm{G_n}$ can be expressed uniquely (up to permutation of factors) as
\[\pi \cong \pi_1 \times ...\times \pi_r,\] where $\pi_i$ is either a unitary Speh or complementary series representation for $1 \le i \le r$. 

\subsection{Intersection-union process and minimality}\label{sec:minimal}
A multisegment $\mathfrak{n}$ is obtained from $\mathfrak{m}$ by an elementary intersection-union operation means for two segments $\Delta_1, \Delta_2 \in \mathfrak{m}$, we have \[\mathfrak{n}= \mathfrak{m} - \Delta_1-\Delta_2 + \{\Delta_1 \cap \Delta_2\} + \{\Delta_1 \cup \Delta_2\}.\] For two multisegments $\mathfrak{n}$ and $\mathfrak{m}$, we define an ordering $\mathfrak{n} \le_Z \mathfrak{m}$ if $\mathfrak{n}$ can be obtained by a sequence of elementary intersection-union operations from $\mathfrak{m}$ or $\mathfrak{n}=\mathfrak{m}$. A multisegment $\mathfrak{m}$ is said to be {\it minimal} for some property $\mathcal{P}$ if there does not exist any multisegment $\mathfrak{n} \lneq_Z \mathfrak{m}$ satisfying the property $\mathcal{P}$.

\subsection{Rd-minimal and Li-minimal} Let $\pi \in \mathrm{Irr}$. A multisegment $\mathfrak{m}$ is said to be Rd-minimal (resp. Li-minimal) to
$\pi$ if $\mathrm{D}^\mathrm{R}_\mathfrak{m}(\pi) \ne 0$ and $\mathrm{D}^\mathrm{R}_\mathfrak{m}(\pi) \ncong \mathrm{D}^\mathrm{R}_\mathfrak{n}(\pi)$ (resp. $\mathrm{I}^\mathrm{L}_\mathfrak{m}(\pi) \ncong \mathrm{I}^\mathrm{L}_\mathfrak{n}(\pi)$)
 for any other multisegment $\mathfrak{n} \lneq_Z \mathfrak{m}$. One can have analogous notions for Ld-minimal and Ri-minimal for left derivative and right integral, respectively.

 Let $(\pi,\pi')$ be a generalized GGP relevant pair as in Definition \ref{def:relevant}. By \cite[Theorem 2.7]{Cha_qbl}, there exist
unique multisegments $\mathfrak{m}$ and $\mathfrak{n}$ such that $\mathfrak{m}$ is Rd-minimal to $\nu^\frac{1}{2}\pi$ and $\mathfrak{n}$ is Ld-minimal to $\pi'$, and both the conditions in Definition \ref{def:relevant} are satisfied by $\mathfrak{m}$ and $\mathfrak{n}$.

We frequently use the following main results of \cite{Cha_csq_iii} without mentioning further. Let $\pi \in \mathrm{Irr}_\rho$ and $\mathfrak{m} \in \mathrm{Mult}_\rho$ be Rd-minimal to $\pi$. Then, (see \cite[Theorem 1.3, 1.4 and Corollary 1.5]{Cha_csq_iii})
\begin{enumerate}
    \item any submultisegment $\mathfrak{m}'$ of $\mathfrak{m}$ is also Rd-minimal to $\pi$ and in particular, $\mathrm{D}^\mathrm{R}_{\mathfrak{m}'}(\pi) \ne 0$.
    \item for any submultisegment $\mathfrak{m}'$ of $\mathfrak{m}$, the submultisegment $\mathfrak{m-m'}$ is Rd-minimal to $\mathrm{D}^\mathrm{R}_{\mathfrak{m}'}(\pi)$ and \[\mathrm{D}^\mathrm{R}_{\mathfrak{m}}(\pi)  \cong  \mathrm{D}^\mathrm{R}_{\mathfrak{m-m'}} \circ   \mathrm{D}^\mathrm{R}_{\mathfrak{m}'}(\pi)\]
    \item Write $\mathfrak{m}=\Delta_1 + ...+\Delta_r$ in any ordering. Then, $\mathrm{D}^\mathrm{R}_{\mathfrak{m}}(\pi)  \cong  \mathrm{D}^\mathrm{R}_{\Delta_r} \circ ... \circ  \mathrm{D}^\mathrm{R}_{\Delta_1}(\pi)$.
\end{enumerate}
The similar results also hold for Ld-minimal situations.


\subsection{Gelfand-Kazhdan involution} \label{sec:gk_invol}
Let $\theta: G_n \rightarrow G_n$ be given by $\theta(g)=g^{-T}$, the inverse transpose of the matrix $g \in G_n$. This induces a covariant auto-equivalence $\theta: \mathrm{Rep}(G_n) \rightarrow \mathrm{Rep}(G_n)$. On the combinatorial side, we define $\theta: \mathrm{Seg}_{\rho}\rightarrow \mathrm{Seg}_{\rho^{\vee}}$ by $\theta\left([a,b]_{\rho}\right)=[-b,-a]_{\rho^{\vee}}$ and the map 
\[ \Theta: \mathrm{Mult}_{\rho}\rightarrow \mathrm{Mult}_{\rho^{\vee}}, \text{ given by } \Theta( \Delta_1+ \ldots+ \Delta_k)= \theta(\Delta_1)+ \ldots + \theta(\Delta_k) .\]

\subsection{Removal process}\label{sec:removal}
For a given multisegment $\mathfrak{m} \in \text{Mult}_\rho$ and a segment $\Delta \in \text{Seg}_\rho$, Chan \cite[Definition 8.2]{Cha_tams} associates a new segment $\mathfrak{r}^\mathrm{R}(\Delta, \mathfrak{m})$ (resp. $\mathfrak{r}^\mathrm{L}(\Delta, \mathfrak{m})$) via a removal process. We start by defining an order $<^L$ on segments in $\text{Seg}_\rho$ by: $[a,b]_\rho <^L [a^\prime,b^\prime]_\rho$ if $a < a^\prime$, or $a=a^\prime$ and $b < b^\prime$.

Suppose $\mathfrak{m} \in \text{Mult}_\rho$ and $\Delta=[a,b]_\rho$ is a segment such that there exists a segment in $\mathfrak{m}$ of the form $[a,c]_\rho$ for some $c \geq b$. The {\it (right) removal process} $\mathfrak{r}^\mathrm{R}(\Delta, \mathfrak{m})$ on $\mathfrak{m}$ by $\Delta$ is as follows: 

\begin{enumerate}
    \item Choose the shortest segment $\Delta_1=[a_1, b_1]_\rho$ in $\mathfrak{m}$ with $a_1=a$ and $b_1 \geq b$.
    \item For $i \geq 2$, recursively choose 
segments $\Delta_i=[a_i, b_i]_\rho$ such that $\Delta_i$ is the minimal segment in $\mathfrak m$ with respect to $<^L$ satisfying $a_{i} > a_{i-1}$ and
$b \leq b_i < b_{i-1}$. This process terminates (after a finite number of steps) when no further such segment can be found, and let $\Delta_r$ be the last segment in the process.
    \item Define the new truncation segments:
                   \[\widetilde{\Delta}_i = [a_{i+1}, b_i]_\rho  \text{ for } 1 \leq i<r, \text{ and }
                       \widetilde{\Delta}_r = [b+1, b_r]_\rho.\]
    \item Then, 
    \[\mathfrak{r}^\mathrm{R}(\Delta, \mathfrak{m})=\mathfrak{m} - \sum\limits_{i=1}^{r} \Delta_i +  \sum\limits_{i=1}^{r} \widetilde{\Delta}_i.\]
\end{enumerate}
For a multisegment $\mathfrak{n}=\Delta_1+...+\Delta_r \in \text{Mult}_\rho$, with segments written in ascending
order, we define
     \[\mathfrak{r}^\mathrm{R}(\mathfrak{n}, \mathfrak{m})= \mathfrak{r}^\mathrm{R}(\Delta_r,\mathfrak{r}^\mathrm{R}(\Delta_{r-1},...\mathfrak{r}^\mathrm{R}(\Delta_1,\mathfrak{m})...)).\]
The left removal process $\mathfrak{r}^\mathrm{L}(\Delta, \mathfrak{m})$ and its iteration are defined analogously; they satisfy
\[\mathfrak{r}^\mathrm{L}(\Delta, \mathfrak{m})=\Theta\left(\mathfrak{r}^\mathrm{R} \left( \theta(\Delta), \Theta(\mathfrak{m}) \right) \right) \text{ and } \mathfrak{r}^\mathrm{L}(\mathfrak{n}, \mathfrak{m})=\Theta\left(\mathfrak{r}^\mathrm{R} \left( \Theta(\mathfrak{n}), \Theta(\mathfrak{m}) \right) \right),\]
where $\theta$ and $\Theta$ denote the Gelfand–Kazhdan involution (see section \ref{sec:gk_invol}).

\subsection{A sufficient condition for strongly RdLi-commutativity} In practice, verifying that a triple 
$(\Delta, \Delta', \pi)$ is strongly RdLi-commutative can be nontrivial. The following lemma provides a simple sufficient condition that covers many cases encountered in our algorithm. We will use this result extensively throughout the paper to establish strong commutativity without resorting to the full definition.
\begin{lemma}\label{lem:example}
Let $\Delta, \Delta'$ be two segments. Suppose that at least one of the following conditions holds:
\begin{enumerate}
    \item $\Delta \cap \Delta' = \emptyset$ (i.e., the two segments have disjoint cuspidal supports);
    \item $s(\Delta')< s(\Delta)$; 
    \item $e(\Delta')< e(\Delta)$.
\end{enumerate}
Then, for any irreducible smooth representation $\pi \in \mathrm{Irr}$ with $\mathrm{D}^\mathrm{R}_{\Delta}(\pi) \neq 0$, the tuple $(\Delta, \Delta', \pi)$ is a strongly RdLi-commutative triple
\end{lemma}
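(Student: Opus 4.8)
The plan is to reduce the problem to the combinatorial criterion of Definition~\ref{def:RdLi_comb} applied at each stage of the filtration in Definition~\ref{def:RdLi}. Fix a decomposition $\mathfrak{m}=\Delta_1+\cdots+\Delta_r$ (in ascending order) of an arbitrary multisegment, and set $\mathfrak{n}=\Delta$ with a single segment, since the single-segment statement for $(\Delta,\Delta',\pi)$ is what we need to bootstrap to the general strongly RdLi-commutative definition. Concretely, it suffices to verify: for every intermediate representation $\sigma=\mathrm{I}^\mathrm{L}_{\mathfrak{n}_j}\circ \mathrm{D}^\mathrm{R}_{\mathfrak{m}_i}(\pi)$ that can appear, one has $\mathrm{D}^\mathrm{R}_\Delta(\sigma)\neq 0$ and $\eta_\Delta(\mathrm{I}^\mathrm{L}_{\Delta'}(\sigma))=\eta_\Delta(\sigma)$; but in the setup of the Lemma $\mathfrak{n}$ is the single segment $\Delta'$ and $\mathfrak{m}$ is the single segment $\Delta$, so the only check is $\mathrm{D}^\mathrm{R}_\Delta(\pi)\neq 0$ (hypothesized) together with the equality of $\eta$-invariants $\eta_\Delta(\mathrm{I}^\mathrm{L}_{\Delta'}(\pi))=\eta_\Delta(\pi)$.

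So the crux is the identity $\eta_\Delta(\mathrm{I}^\mathrm{L}_{\Delta'}(\pi))=\eta_\Delta(\pi)$ under either hypothesis on the pair $(\Delta,\Delta')$. Recall that $\eta_{[a,b]_\rho}(\pi)$ records the derivative depths $\varepsilon^\mathrm{R}_{[a',b]_\rho}(\pi)$ for $a\le a'\le b$, i.e.\ it is governed entirely by the ``right-edge'' Jacquet behaviour of $\pi$ along the cuspidal line of $\rho$ at the endpoint $e(\Delta)=\nu^b\rho$. The left integral $\mathrm{I}^\mathrm{L}_{\Delta'}(\pi)$ is the unique simple submodule of $\mathrm{St}(\Delta')\times\pi$. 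The plan is to analyse the interaction of the operator $\mathrm{I}^\mathrm{L}_{\Delta'}$ with the right-derivative tower $\left(\mathrm{D}^\mathrm{R}_{[a',b]_\rho}\right)$ via a commutation/exchange argument: first I would treat the case $\Delta\cap\Delta'=\emptyset$, where either the cuspidal lines differ (so $\mathrm{St}(\Delta')$ and the relevant derivatives involve disjoint cuspidal data and trivially commute), or the lines agree but the segments are disjoint --- in the latter case one shows the left integral on $\mathrm{St}(\Delta')$ cannot affect the right-derivative count at $e(\Delta)$ because the Jacquet module computations factor: pushing $\mathrm{St}(\Delta')$ in from the left never creates or destroys a $\nu^{e(\Delta)}\rho$-quotient on the right. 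This is essentially a Leibniz-rule / geometric-lemma bookkeeping argument on Jacquet modules (cf.\ the derivative-integral formalism of \cite{LM16, LM18, Cha_qbl}).

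For the second case --- $s(\Delta')<s(\Delta)$ or $e(\Delta')<e(\Delta)$ --- I would argue that the endpoint $e(\Delta')$ sits strictly to the left of, or below, $e(\Delta)$ on the cuspidal line, so again the left integral's effect is confined to segments ending at $e(\Delta')\neq e(\Delta)$ (in the $e(\Delta')<e(\Delta)$ subcase) or, in the $s(\Delta')<s(\Delta)$ subcase, $\mathrm{St}(\Delta')$ is ``pushed past'' the derivatives $[a',b]_\rho$ with $a'\ge s(\Delta)>s(\Delta')$ without obstruction, using that $[a',b]_\rho$ and $\Delta'$ are then either unlinked or linked in the favourable direction. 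The key technical input is a commutation statement of the shape $\mathrm{D}^\mathrm{R}_{[a',b]_\rho}\circ\mathrm{I}^\mathrm{L}_{\Delta'}\cong\mathrm{I}^\mathrm{L}_{\Delta'}\circ\mathrm{D}^\mathrm{R}_{[a',b]_\rho}$ (as operators on $\mathrm{Irr}$, in the relevant range), which should follow from \cite[Theorem 1.5]{Cha_duality} or the RdLi-commutativity results already cited, or be provable directly by the same Jacquet-module / uniqueness-of-submodule arguments used to define $\mathrm{D}^\mathrm{R}$ and $\mathrm{I}^\mathrm{L}$.

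\textbf{Main obstacle.} I expect the hard part to be the boundary subcase where $\Delta$ and $\Delta'$ are linked with $e(\Delta')<e(\Delta)$ but $\Delta'$ reaches into the interior of $\Delta$: here the left integral can a priori shuffle the internal structure of $\pi$ along the cuspidal line of $\rho$, and one must show that although $\mathrm{I}^\mathrm{L}_{\Delta'}$ changes $\pi$, it changes it in a way that is ``invisible'' to the entire tuple $\eta_\Delta$, not merely to its first coordinate. Establishing this uniformly in $a'$ (all coordinates of the $\eta$-tuple simultaneously) is where the real work lies; I anticipate needing to induct on $\ell_{abs}(\Delta')$ or on the depth $\varepsilon^\mathrm{R}_\Delta(\pi)$, peeling off one derivative at a time and invoking the case already treated, together with the uniqueness statement of \cite{LM16} to control the intermediate representations.
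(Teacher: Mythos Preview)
The paper's own proof is a one-line citation: it follows from Example~9.2 and Theorem~9.4 of \cite{Cha_qbl}. There is no argument in the paper to compare against; the lemma is imported wholesale.

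Your reduction to the combinatorial criterion (Definition~\ref{def:RdLi_comb}) is correct: for single segments the only check is $\mathrm{D}^\mathrm{R}_\Delta(\pi)\neq 0$ together with $\eta_\Delta(\mathrm{I}^\mathrm{L}_{\Delta'}(\pi))=\eta_\Delta(\pi)$. But what follows is a plan rather than a proof, and the plan has a circularity. You name the operator commutation $\mathrm{D}^\mathrm{R}_{[a',b]_\rho}\circ\mathrm{I}^\mathrm{L}_{\Delta'}\cong\mathrm{I}^\mathrm{L}_{\Delta'}\circ\mathrm{D}^\mathrm{R}_{[a',b]_\rho}$ as the key technical input and say it should follow from \cite[Theorem~1.5]{Cha_duality}; but that theorem is precisely the equivalence of combinatorial and strong RdLi-commutativity, so invoking it here presupposes the $\eta$-invariant equality you are trying to establish. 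The alternative route you mention --- a direct Jacquet-module argument --- is the honest one, but you do not carry it out, and the ``main obstacle'' you correctly flag (the linked case with $e(\Delta')<e(\Delta)$ and $\Delta'$ reaching into the interior of $\Delta$) is exactly where the substance lies. In that regime the left integral can, via the expansion step of Algorithm~\ref{alg:int:Lang}, modify segments whose structure interacts with the right-derivative tower at $e(\Delta)$, and showing that every coordinate of $\eta_\Delta$ is nonetheless preserved is the content of \cite[\S 9]{Cha_qbl} that the paper cites rather than reproves. Your proposed induction on $\ell_{abs}(\Delta')$ or on $\varepsilon^\mathrm{R}_\Delta(\pi)$ does not obviously close this without that external input.
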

\begin{proof}
The statement follows immediately from \cite[Example 9.2 and Theorem 9.4]{Cha_qbl}. The essential idea is that under any of the given geometric conditions, the operations of taking the right derivative with respect to $\Delta$ and the left integral with respect to $\Delta'$ commute in the appropriate sense, and the required $\eta$-invariant condition is automatically satisfied.
\end{proof}

\section{Algorithms for derivatives and integrals}\label{sec:der_int}
Algorithms for computing derivatives and integrals (also known as Jacquet modules and parabolic inductions) have been studied extensively in the literature. Foundational work includes the combinatorial approaches of Jantzen \cite{Jan}, Minguez \cite{Min}, and Lapid–Minguez \cite{LM16}. More recently, Chan and Pattanayak \cite{CP} provided a unified algorithmic framework for computing these operations in both the Zelevinsky and Langlands classifications.

In this section, we recall the algorithms for computing St-derivatives (i.e., derivatives with respect to a generalized Steinberg representation $\mathrm{St}(\Delta)$) in the Langlands classification, as developed by Lapid–Minguez \cite{LM16} and Chan–Pattanayak \cite{CP}. These algorithms are essential for executing the main algorithm for quotient branching laws presented in Section \ref{sec:alg}. We also recall the algorithm for computing the highest derivative multisegment from \cite{CP}, which plays a key role in the reduction steps of Algorithm \ref{alg:relevant}.

\subsection{Algorithms of Lapid-Minguez}
Lapid and Minguez \cite{LM16} developed a systematic method for computing right and left derivatives in the Langlands classification using the Zelevinsky involution. We recall their main result. For a multisegment $\mathfrak{m} \in \mathrm{Mult}$ and a cuspidal representation $\tau \in \mathrm{Irr^{cusp}}$, define the following sub-multisegments:
\[\mathfrak{m}_{\le_e \tau}=\{\Delta \in \mathfrak{m} \mid e(\Delta) \le \tau \} \text{ and } \mathfrak{m}_{{_e}> \tau}=\{\Delta \in \mathfrak{m} \mid e(\Delta) > \tau \}.\]

\begin{theorem}\cite[Corollary 6.8]{LM16}
Let $\Delta \in \mathrm{Seg}_\rho$ and $\pi=L(\mathfrak{m}) \in \mathrm{Irr}_\rho$ for some $\mathfrak{m} \in \mathrm{Mult}_\rho$. Then, $\mathrm{D}^\mathrm{L}_\Delta \left(\pi \right) \ne 0$ if and only if there exists a multisegment $\mathfrak{m}_*$ such that 
\[\left( \mathfrak{m}_{\le_e ~e(\Delta)} \right)^\# = \Delta^\# + \mathfrak{m}_*.\]
If such $\mathfrak{m}_*$ exists, we define \[\mathcal{D}_\Delta^\mathrm{L}(\mathfrak{m})= \mathfrak{m}_*^\# + \mathfrak{m}_{{_e}> ~e(\Delta)},\] and otherwise we set $\mathcal{D}_\Delta^\mathrm{L}(\mathfrak{m})= \infty$ (indicating that the left derivative vanishes). Further, define the right version
\[\mathcal{D}_{\Delta}^\mathrm{R}(\mathfrak{m}) = \begin{cases}
   \Theta \left( \mathcal{D}_{\theta(\Delta)}^\mathrm{L} \left(\Theta(\mathfrak{m})\right) \right)     &\mbox{if } \mathcal{D}_\Delta^\mathrm{L}(\mathfrak{m}) \neq \infty\\
     \infty &\mbox{otherwise }. 
\end{cases}
\]
Then, we have
 \[\mathrm{D}^\mathrm{R}_\Delta \left(L(\mathfrak{m}) \right) \cong \begin{cases}
     L \left( \mathcal{D}_\Delta^\mathrm{R}(\mathfrak{m})  \right)    &\mbox{if } \mathcal{D}_\Delta^\mathrm{R}(\mathfrak{m}) \neq \infty\\
     0 &\mbox{otherwise }.
\end{cases} \] and 
\[\mathrm{D}^\mathrm{L}_\Delta \left(L(\mathfrak{m}) \right) \cong \begin{cases}
     L \left( \mathcal{D}_\Delta^\mathrm{L}(\mathfrak{m})  \right)    &\mbox{if } \mathcal{D}_\Delta^\mathrm{L}(\mathfrak{m}) \neq \infty\\
     0 &\mbox{otherwise }.
\end{cases} \]
\end{theorem}

\subsection{An alternate algorithm of St-derivatives}
Chan and Pattanayak \cite{CP} developed an alternative, more explicit combinatorial algorithm for computing derivatives in the Langlands classification. This algorithm proceeds by analyzing upward sequences within a multisegment. 

\begin{definition}[Upward sequence $\mathcal{U}$]
 Let  $\mathfrak{n} \in \mathrm{Mult}_\rho$ in a fixed cuspidal line $\rho$. Define its {\it upward sequence $\mathcal{U}(\mathfrak{n})$} as follows:
 \begin{enumerate}
     \item Let $a_1$ be the smallest number for which $\mathfrak{n}[a_{1}] \neq \emptyset$. Choose the longest segment $\Delta_{1}\in \mathfrak{n}[a_{1}]$.
     \item Recursively for $j \geq 2$, if possible, find the smallest number $a_{j}> a_{j-1}$ such that there exists a segment $\Delta^\prime_j \in \mathfrak{n}[a_{j}]$ with $\Delta_{j-1} \prec \Delta^\prime_j$. Then, choose the longest segment $\Delta_{j}\in \mathfrak{n}[a_{j}]$ satisfying $\Delta_{j-1} \prec \Delta_j$.
     \item This process terminates after a finite number of steps, say $r$.  The upward sequence is then defined as
\[\mathcal{U}(\mathfrak{n})=\Delta_1 + \Delta_2+\ldots + \Delta_r.\] 
 \end{enumerate}
 If no such segment exists at any step, the process terminates and we set $\mathcal{U}(\mathfrak{n})=\emptyset$. 
\end{definition}

For a given multisegment $\mathfrak{m} \in \mathrm{Mult}_\rho$ and a segment $\Delta=[a,b]_\rho \in \mathrm{Seg}_\rho$, we define the sub-multisegmnt \[\mathfrak{m}_{[a,b]}=\{[a^\prime, b^\prime]_\rho \in \mathfrak{m} \mid a \leq a^\prime \leq b+1 \leq b^\prime +1 \}.\] This captures all segments in $\mathfrak{m}$ that are “linked” to $\Delta$ in a way that could contribute to the derivative computation.

\begin{algorithm}\cite[Algorithm 3.3]{CP}\label{alg:der:Lang}
Suppose $\mathfrak{m} \in \mathrm{Mult}_\rho$ and $[a,b]_\rho \in \mathrm{Seg}_\rho$ are given. We set $\mathfrak{m}_1=\mathfrak{m}_{[a,b]}$.

Step 1. ({\bf Arrange all upward sequences}):  Compute the upward sequence $\mathcal{U}(\mathfrak{m}_1)= \Delta_{1,1}+ \Delta_{1,2}+ \ldots+ \Delta_{1,r_1}$ with $\Delta_{1,j} \prec \Delta_{1,j+1}$. Recursively for $i \ge 2$, define \[\mathfrak{m}_{i}= \mathfrak{m}_{i-1} - \mathcal{U}(\mathfrak{m}_{i-1}) \text{ and } \mathcal{U}(\mathfrak{m}_i)= \Delta_{i,1} + \Delta_{i,2} + \ldots +\Delta_{i,r_i} \text{ with }\Delta_{i,j} \prec \Delta_{i,j+1}.\]
Continue until $\mathfrak{m}_{k+1}=\emptyset$ for some integer $k$. 

Step 2. ({\bf Find removable free points}): For each segment $\Delta_{i,j}=\left[a_{i,j},b_{i,j}\right]_\rho$ obtained in Step 1, we define its removable free section as
\begin{align*}\label{eqn removable free}
\mathfrak{rf}\left(\Delta_{i,j} \right):= \begin{cases}
    \left[a_{i,j},~ a_{i,j+1}-2 \right]_\rho &\mbox{ if } 1 \leq j < r_i\\
    \Delta_{i,r_i} &\mbox{ if } j=r_i.
\end{cases}
\end{align*}
For a real number $y$, we call $[y]_\rho$ a `{\it removable free point}' of $\Delta_{i,j} $  if $ \mathfrak{rf}\left(\Delta_{i,j}\right)=[x,z]_\rho$ and $x \leq y \leq z$.

Step 3. ({\bf Selection}): Select a collection of segments $\Delta_{i,j}$ having a removable free point according to the following recursive procedure:
\begin{enumerate}
    \item[(i)] Choose a segment $\Delta_{i_1,j_1} \in \mathfrak{m}_1$ (if it exists) where $i_1$ is the largest integer in $\{1,...,k\}$ such that $[a_{i_1,j_1},b]_\rho \subseteq \mathfrak{rf}\left(\Delta_{i_1,j_1} \right)$ for some $j_1 \in \left\{ 1,...,r_{i_1} \right\}$.
    \item[(ii)] Recursively for $t \geq 2$, we choose a segment $\Delta_{i_t,j_t} \in \mathfrak{m}_1$ (if it exists), where $i_t$ is the largest integer in $\{1,...,i_{t-1}\}$ such that 
     \[\left[a_{i_t,j_t}, a_{i_{t-1}, j_{t-1}}-1\right]_\rho \subseteq \mathfrak{rf}\left(\Delta_{i_t,j_t} \right).\]
    \item[(iii)] This process terminates after a finite number of steps, say $\ell$, and suppose $\Delta_{i_\ell, j_\ell}$ is the last segment selected.
\end{enumerate}

Step 4. ({\bf Construct the derivative}):
If $a_{i_\ell, j_\ell} = a,$ define truncations of the selected segments as follows:
 \[\Delta_{i_1, j_1}^\mathrm{trc} = \left[b+1, ~ b_{i_1, j_1}\right]_\rho, \text{ and } \Delta_{i_t, j_t}^\mathrm{trc}= \left[a_{i_{t-1},j_{t-1}}, ~ b_{i_t, j_t}\right]_\rho  \text{ for }  2 \leq i \leq \ell.\] Then, the right derivative multisegment in the Langlands classification is given by 
\[\mathcal{D}_{[a,b]_\rho}^\mathrm{R}(\mathfrak{m}) = \mathfrak{m}- \sum\limits_{t=1}^{\ell} \Delta_{i_t, j_t} + \sum\limits_{t=1}^{\ell} \Delta_{i_t, j_t}^\mathrm{trc}.\]
 If $\Delta_{i_1,j_1}$ does not exist, or $a_{i_\ell, j_\ell} \neq a,$ we set $\mathcal{D}_{[a,b]_{\rho}}^\mathrm{R}(\mathfrak{m}) = \infty,$  indicating that $\mathrm{D}^\mathrm{R}_{[a,b]_\rho}(L(\mathfrak{m}))=0$ \qed
\end{algorithm}

\begin{remark} \label{rmk:removal}
For a generic representation $\pi = L(\mathfrak{m}) \in \mathrm{Irr}_\rho$ and a segment $\Delta \in \mathrm{Seg}_\rho$, the derivative computed via Algorithm \ref{alg:der:Lang} coincides with the removal process defined in Section \ref{sec:removal}. Specifically,
    \[\mathcal{D}^\mathrm{R}_\Delta(\mathfrak{m}) = \mathfrak{r}^\mathrm{R}(\Delta, \mathfrak{m}), \text{ and } \mathcal{D}^\mathrm{L}_\Delta(\mathfrak{m}) = \mathfrak{r}^\mathrm{L}(\Delta, \mathfrak{m}).\]
\end{remark}

\begin{lemma}[A special case]\label{lem:rmk}
Suppose $\mathfrak{n}_1= \sum\limits_{i=1}^k \mathcal{U}(\mathfrak{n}_i)$ is a multisegment, where $\mathcal{U}(\mathfrak{n}_{1})=[a, b]_\rho + [c+1, b+1]_\rho+...+[y_1, y_1']_\rho$ with $a <c$, and for $i>1$, recursively $\mathfrak{n}_i=\mathfrak{n}_{i-1}-\mathcal{U}(\mathfrak{n}_{i-1})$ and $\mathcal{U}(\mathfrak{n}_{i})=[x_i, b]_\rho + [x_i', b+1]_\rho+...+[y_i, y_i']_\rho$ with $a < x_i \le c \le x_i'-1$. Then,
\[\mathcal{D}_{[a,c-1]_\rho}^\mathrm{R}(\mathfrak{n}_1) = \mathfrak{n}_1-[a,b]_\rho+[c,b]_\rho.\]    
\end{lemma}
\begin{proof}
 This follows immediately from Algorithm \ref{alg:der:Lang} by tracking the effect of the upward sequences and the selection process.
\end{proof}

\subsection{An algorithm for the highest derivatives multisegments} The highest derivative of a representation plays a crucial role in the reduction steps of our main algorithm. Intuitively, it captures the “maximal” derivative that can be taken while preserving irreducibility.

\begin{definition}[Highest derivative multisegment]\label{def:hd}
For $\pi \in \mathrm{Irr(GL_n(F))}$, the {\it highest right derivative multisegment} of $\pi$, denoted by $\mathfrak{hd}^\mathrm{R}(\pi)$, is the unique minimal multisegment (with respect to partial order $\le_Z$ defined in Section \ref{sec:minimal}) such that
\[\mathrm{D}^\mathrm{R}_{\mathfrak{hd}^\mathrm{R}(\pi)}(\pi) \cong \pi^- \quad \] where $\pi^-$ denotes the highest right Bernstein-Zelevinsky derivative of $\pi$. Analogously, the {\it highest left derivative multisegment} $\mathfrak{hd}^\mathrm{L}(\pi)$ satisfies 
\[\mathrm{D}^\mathrm{L}_{\mathfrak{hd}^\mathrm{L}(\pi)}(\pi) \cong {^-}\pi,\]
where ${^-}\pi$ denotes the highest left Bernstein-Zelevinsky derivative of $\pi$.
\end{definition}

\begin{algorithm}\cite[Algorithm 7.4]{CP}\label{alg:hd:multisegment:zel}
Let $\mathfrak m \in \mathrm{Mult}_{\rho}$ be a multisegment. The highest right derivative multisegment $\mathfrak{hd}^\mathrm{R}(\mathfrak m)$ is constructed as follows. 
\begin{enumerate}
    \item Set $\mathfrak m_1=\mathfrak m$
    \item Iteration: For $i \ge 1$, let $a_i$ be the smallest integer such that $\mathfrak m_i\langle a_i \rangle\neq \emptyset$. Choose the longest segment $\Delta_{i,a_i}$ in $\mathfrak m_i\langle a_i\rangle$. For $j \geq a_i+1$,  recursively choose the longest segment $\Delta_{i,j}$ in $\mathfrak m_i\langle j\rangle$ such that $\Delta_{i,j}$ is linked to $\Delta_{i,j-1}$. This process terminates when no such segment can be found; let $b_i$ be the ending index of the last segment $\Delta_{i,b_i}$ chosen.
    \item Removal: Define
              \[ \mathfrak m_{i+1}=\mathfrak m_i-\sum\limits_{j=a_i}^{b_i}\Delta_{i,j}.\]
    \item Termination: Repeat steps 2–3 until $\mathfrak{m}_{\ell +1} = \emptyset$. Then, define
                \[ \mathfrak{hd}^\mathrm{R}(\mathfrak m)=[a_1,b_1]_{\rho}+...+[a_\ell,b_\ell]_{\rho} .\]
\end{enumerate} 
\end{algorithm}
For the left derivative, the highest left derivative multisegment is obtained via the Gelfand–Kazhdan involution:
\[\mathfrak{hd}^\mathrm{L}(\mathfrak m) =      \Theta \left( \mathfrak{hd}^\mathrm{R} (\Theta(\mathfrak m))  \right).\]
\begin{theorem}\cite[Theorem 7.5]{CP}
  For a multisegment $\mathrm{m} \in \mathrm{Mult}_\rho$, we have,
  \[\mathfrak{hd}^\mathrm{R}(Z(\mathfrak m))= Z\left( \mathfrak{hd}^\mathrm{R}(\mathfrak m)  \right)   \text{ and } \mathfrak{hd}^\mathrm{L}(Z(\mathfrak m))= Z\left( \mathfrak{hd}^\mathrm{L}(\mathfrak m) \right).\]
\end{theorem}

\section{Generic representations and branching law}\label{sec:generic}
Let $\pi$ and $ \pi'$ be irreducible generic representations of $\mathrm{GL}_{n+1}(F)$ and $\mathrm{GL}_{n}(F)$, respectively. Generic representations—those possessing a non-zero Whittaker functional—form the building blocks of the representation theory of general linear groups over non-archimedean local fields. In this section, we show that for any such pair is automatically a generalized GGP relevant pair. This provides the base case for our inductive algorithm in Section \ref{sec:alg}.

Using the Rankin–Selberg theory, Jacquet, Piatetski-Shapiro, and Shalika \cite{JPSS} established the following fundamental result:\[\mathrm{dim}~\mathrm{Hom}_{\mathrm{GL}_{n}(F)}(\pi, \pi') =1.\]

In terms of the generalized GGP relevance criterion (Definition \ref{def:relevant}), this means that every generic pair $(\pi,\pi')$ with $n_{\pi}=n_{\pi'}+1$ is a generalized GGP relevant pair. Our goal in this section is to make this explicit: we construct explicit multisegments $\mathfrak{p}$ and $\mathfrak{q}$ that witness the relevance, and we verify the strongly RdLi-commutative condition using the sufficient criterion from Lemma \ref{lem:example}.

Let $\pi=L(\mathfrak{m})$ and $\pi'=L(\mathfrak{n})$ be any two generic representations, where $\mathfrak{m}, \mathfrak{n} \in \mathrm{Mult}$ are multisegments consisting of mutually unlinked segments. This is a characterization of generic representations: an irreducible representation is generic if and only if its Langlands multisegment consists of pairwise unlinked segments (see \cite{Zel}).

\subsection{Construction of $\mathfrak{p}_{\mathfrak{m}, \mathfrak{n}}$ and $\mathfrak{q}_{\mathfrak{m}, \mathfrak{n}}$}\label{sec:construction}
Fix a cuspidal representation $\rho \in \mathrm{Irr}^\mathrm{cusp}$. Let $\mathrm{Mult}^{\mathrm{ul}}_\rho \subset \mathrm{Mult}_\rho$ (resp. $\mathrm{Mult}^{\mathrm{ul}} \subset \mathrm{Mult}$) consists of those multisegment $\mathfrak{s} \in \mathrm{Mult}_\rho$ (resp. $\mathfrak{s} \in \mathrm{Mult}$) whose segments are mutually unlinked. For a general multisegment (possibly involving multiple cuspidal lines), we can decompose it as a sum over distinct cuspidal lines, as generic representations have no linking between segments in different cuspidal lines.

Let $\mathfrak{m}_\rho, \mathfrak{n}_\rho \in \mathrm{Mult}^{\mathrm{ul}}_\rho$. We now construct three multisegments $\mathfrak{p}_{\mathfrak{m}_\rho, \mathfrak{n}_\rho}, \mathfrak{q}_{\mathfrak{m}_\rho, \mathfrak{n}_\rho}$ and $\mathfrak{t}_{\mathfrak{m}_\rho, \mathfrak{n}_\rho}$ via an iterative matching procedure: first set $\mathfrak{m}_1= \mathfrak{m}_\rho$ and $\mathfrak{n}_1= \mathfrak{n}_\rho$. 

\begin{itemize}
    \item Selection step: Let $\nu^{a_1}\rho=\mathrm{min} \{s(\Delta) \mid \Delta \in \mathfrak{m}_1\}$ be the minimal starting point among segments in $\mathfrak{m}_1$. Choose the longest segment $\Delta_1=[a_1, b_1]_\rho \in \mathfrak{m}_1$ with starting point $s(\Delta_1)=\nu^{a_1}\rho$. If the set $\{[a', b']_\rho \in \mathfrak{n}_1 \mid a_1 \leq a' \leq b_1 \leq b' \}$ is non-empty, we define $[a_1', b_1']_\rho$ to be its longest segment; otherwise we set $[a_1', b_1']_\rho=\emptyset$.
    \item Removal step: Define $\mathfrak{m}_2= \mathfrak{m}_1 - [a_1, b_1]_\rho$ and $\mathfrak{n}_2= \mathfrak{n}_1 - [a_1', b'_1]_\rho$
    \item Repeat: Continue this process. After $k$ steps, we obtain $\mathfrak{m}_{k+1}=\emptyset$ (since $\mathfrak{m}_\rho$ is finite). Thus, we have produced a sequence of matched pairs of segments $\left([a_i, b_i]_\rho, [a_i', b_i']_\rho \right)$ for $i=1,...,k$, where some $[a_i', b_i']_\rho$ may be empty.
    \item Final construction: We define the following multisegments:
     \[\mathfrak{p}_{\mathfrak{m}_\rho, \mathfrak{n}_\rho}=\sum\limits_{i=1}^k [a_i, a'_i-1]_\rho, ~\mathfrak{q}_{\mathfrak{m}_\rho, \mathfrak{n}_\rho}= \sum\limits_{i=1}^k [b_i+1, b_i']_\rho+ \mathfrak{n}_{k+1} \text{ and }\mathfrak{t}_{\mathfrak{m}_\rho, \mathfrak{n}_\rho}=\sum\limits_{i=1}^k [a_i', b_i]_\rho,\] with the conventions that if $[a_i', b_i']_\rho=\emptyset$, then we set $[a_i, a'_i-1]_\rho=[a_i, b_i]_\rho$, and $[b_i+1, b_i']_\rho=\emptyset=[a_i', b_i]_\rho$.
\end{itemize}

\begin{lemma}\label{lem:derivative_R=L}
With the above notations, we have \[\mathrm{D}^\mathrm{R}_{\mathfrak{p}_{\mathfrak{m}_\rho, \mathfrak{n}_\rho}}\left(L(\mathfrak{m}_\rho)\right) \cong L\left(\mathfrak{t}_{\mathfrak{m}_\rho, \mathfrak{n}_\rho}\right) \cong \mathrm{D}^\mathrm{L}_{\mathfrak{q}_{\mathfrak{m}_\rho, \mathfrak{n}_\rho}}\left(L\left(\mathfrak{n}_\rho\right)\right).\]
\end{lemma}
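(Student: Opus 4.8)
Since the segments of $\mathfrak{m}_\rho$ and of $\mathfrak{n}_\rho$ are mutually unlinked, $L(\mathfrak{m}_\rho)$ and $L(\mathfrak{n}_\rho)$ are generic. Hence, by the Remark following Theorem~\ref{thm:der:Lang}, every right (resp. left) derivative of $L(\mathfrak{m}_\rho)$ (resp. $L(\mathfrak{n}_\rho)$) is computed by the removal process: $\mathrm{D}^\mathrm{R}_\mathfrak{s}\big(L(\mathfrak{m}_\rho)\big)\cong L\big(\mathfrak{r}^\mathrm{R}(\mathfrak{s},\mathfrak{m}_\rho)\big)$ and $\mathrm{D}^\mathrm{L}_\mathfrak{s}\big(L(\mathfrak{n}_\rho)\big)\cong L\big(\mathfrak{r}^\mathrm{L}(\mathfrak{s},\mathfrak{n}_\rho)\big)$, whenever the displayed removal processes are defined. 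So the plan is to reduce the lemma to the two purely combinatorial identities
\[\mathfrak{r}^\mathrm{R}\big(\mathfrak{p}_{\mathfrak{m}_\rho,\mathfrak{n}_\rho},\ \mathfrak{m}_\rho\big)=\mathfrak{t}_{\mathfrak{m}_\rho,\mathfrak{n}_\rho}=\mathfrak{r}^\mathrm{L}\big(\mathfrak{q}_{\mathfrak{m}_\rho,\mathfrak{n}_\rho},\ \mathfrak{n}_\rho\big),\]
checking along the way that the removal processes are applicable (the inequalities $a_i\le a_i'\le b_i\le b_i'$ built into the construction guarantee, at each step, the presence of a segment starting where the current removal segment starts and long enough to be removed).

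To establish $\mathfrak{r}^\mathrm{R}(\mathfrak{p}_{\mathfrak{m}_\rho,\mathfrak{n}_\rho},\mathfrak{m}_\rho)=\mathfrak{t}_{\mathfrak{m}_\rho,\mathfrak{n}_\rho}$ I would induct on the number $k$ of selection steps, with $k=0$ trivial. For the inductive step, peel off the first selection step, writing $\mathfrak{m}_\rho=[a_1,b_1]_\rho+\mathfrak{m}_2$ with $\nu^{a_1}\rho$ minimal among $\{s(\Delta):\Delta\in\mathfrak{m}_\rho\}$, and accordingly $\mathfrak{n}_\rho=[a_1',b_1']_\rho+\mathfrak{n}_2$, $\mathfrak{p}_{\mathfrak{m}_\rho,\mathfrak{n}_\rho}=[a_1,a_1'-1]_\rho+\mathfrak{p}_{\mathfrak{m}_2,\mathfrak{n}_2}$, $\mathfrak{t}_{\mathfrak{m}_\rho,\mathfrak{n}_\rho}=[a_1',b_1]_\rho+\mathfrak{t}_{\mathfrak{m}_2,\mathfrak{n}_2}$ (with the stated conventions when $[a_1',b_1']_\rho=\emptyset$). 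The heart of the matter is to show that, after all of $\mathfrak{p}_{\mathfrak{m}_\rho,\mathfrak{n}_\rho}$ is processed, the net effect on $\mathfrak{m}_\rho$ is to truncate $[a_1,b_1]_\rho$ to $[a_1',b_1]_\rho=[a_1,b_1]_\rho\cap[a_1',b_1']_\rho$ and otherwise to act on $\mathfrak{m}_2$ exactly as $\mathfrak{r}^\mathrm{R}(\mathfrak{p}_{\mathfrak{m}_2,\mathfrak{n}_2},-)$ does; the inductive hypothesis then closes the argument. Here I would use the order-independence of $\mathfrak{r}^\mathrm{R}(\mathfrak{s},-)$ under reordering incomparable segments of $\mathfrak{s}$ to remove $[a_1,a_1'-1]_\rho$ \emph{last}: the earlier removals, whose removal segments all start strictly to the right of $\nu^{a_1}\rho$ (or are nested shorter start-$a_1$ segments handled at later selection steps), never touch the longest segment $[a_1,b_1]_\rho$, so at the final step it is truncated cleanly to $[a_1',b_1]_\rho$; when $[a_1',b_1']_\rho=\emptyset$ this last step simply deletes $[a_1,b_1]_\rho$, since then $[a_1,a_1'-1]_\rho=[a_1,b_1]_\rho$ and the removal produces the void segment $[b_1+1,b_1]_\rho$.

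The identity $\mathfrak{r}^\mathrm{L}(\mathfrak{q}_{\mathfrak{m}_\rho,\mathfrak{n}_\rho},\mathfrak{n}_\rho)=\mathfrak{t}_{\mathfrak{m}_\rho,\mathfrak{n}_\rho}$ is the mirror statement: $\mathfrak{q}_{\mathfrak{m}_\rho,\mathfrak{n}_\rho}$ consists of the ``right tails'' $[b_i+1,b_i']_\rho$ together with the unmatched remainder $\mathfrak{n}_{k+1}$ of $\mathfrak{n}_\rho$, and the left removal process peels each segment of $\mathfrak{n}_{k+1}$ off entirely while truncating each $[a_i',b_i']_\rho$ from the right to $[a_i',b_i]_\rho$, which again yields $\mathfrak{t}_{\mathfrak{m}_\rho,\mathfrak{n}_\rho}$. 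One proves this either by rerunning the previous induction with the roles of ``beginning/end'', ``left/right truncation'' and ``$\mathfrak{m}_\rho$/$\mathfrak{n}_\rho$'' interchanged, or by transporting the first identity through the Gelfand--Kazhdan involution via $\mathcal{D}^\mathrm{L}_{[a,b]_\rho}=\Theta\circ\mathcal{D}^\mathrm{R}_{[-b,-a]_{\rho^\vee}}\circ\Theta$, after checking that $\Theta$ carries the construction attached to $(\mathfrak{m}_\rho,\mathfrak{n}_\rho)$ to the analogous construction attached to $(\Theta\mathfrak{n}_\rho,\Theta\mathfrak{m}_\rho)$.

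The main obstacle is the inductive step of the right identity. Unlike the single-segment case, the removal process $\mathfrak{r}^\mathrm{R}([a_1,a_1'-1]_\rho,-)$ need \emph{not} be local: the staircase in its definition can recruit nested segments -- for instance $\mathfrak{r}^\mathrm{R}\big([0,0]_\rho,\ [0,3]_\rho+[1,2]_\rho\big)=[1,3]_\rho+[1,2]_\rho$ -- so a single removal may transiently distort $\mathfrak{m}_2$. Showing that such distortions are undone once the remaining segments of $\mathfrak{p}_{\mathfrak{m}_\rho,\mathfrak{n}_\rho}$ are processed -- or, equivalently, exhibiting an order of removals in which no distortion ever occurs -- is the delicate point, and it is exactly where the unlinkedness of $\mathfrak{m}_\rho$ and the minimality built into the selection step get used.
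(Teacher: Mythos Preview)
The paper's own proof is the single line ``This follows from the algorithm of derivatives in the Langlands classification,'' i.e.\ a direct appeal to Algorithm~\ref{alg:der:Lang}. Your strategy---reduce to combinatorial identities for the removal process and induct on the number of selection steps---is a reasonable way to flesh this out, and you correctly isolate the generic hypothesis as the crux. Two points, however, need repair.

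First, the reduction step itself is unsafe. You invoke the Remark after Theorem~\ref{thm:der:Lang} to equate $\mathcal{D}^\mathrm{R}_\Delta$ with $\mathfrak{r}^\mathrm{R}(\Delta,-)$ on generic multisegments, intending to iterate. But that Remark, as stated, is not correct: for $\mathfrak{m}=[1,5]_\rho+[3,3]_\rho$ (unlinked) and $\Delta=[1,1]_\rho$, Algorithm~\ref{alg:der:Lang} gives $\mathcal{D}^\mathrm{R}_\Delta(\mathfrak{m})=[2,5]_\rho+[3,3]_\rho$ (only $[1,5]_\rho$ lies in $\mathfrak{m}_{[1,1]}$), whereas the removal staircase recruits $[3,3]_\rho$ and yields $\mathfrak{r}^\mathrm{R}(\Delta,\mathfrak{m})=[3,5]_\rho+[2,3]_\rho$. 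So you cannot transport the problem to $\mathfrak{r}^\mathrm{R}$ one segment at a time; work directly with Algorithm~\ref{alg:der:Lang} instead.

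Second, your proposed inductive tactic---reorder so that $[a_1,a_1'-1]_\rho$ is processed \emph{last}, claiming earlier removals never touch $[a_1,b_1]_\rho$---does not go through, because the segments of $\mathfrak{p}_{\mathfrak{m}_\rho,\mathfrak{n}_\rho}$ are \emph{not} pairwise unlinked in general. With $\mathfrak{m}_\rho=[0,5]_\rho+[1,3]_\rho$ and $\mathfrak{n}_\rho=[2,7]_\rho+[3,3]_\rho$ one gets $\mathfrak{p}=[0,1]_\rho+[1,2]_\rho$, and $[0,1]_\rho\prec[1,2]_\rho$; the ascending order forces $[a_1,a_1'-1]_\rho=[0,1]_\rho$ to be processed \emph{first}, and swapping the order gives a different (wrong) answer. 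The ``obstacle'' you flag is thus exactly the point: the first derivative genuinely distorts $\mathfrak{m}_2$ (here $[1,3]_\rho\mapsto[2,3]_\rho$), and one must show the subsequent derivatives absorb that distortion. The cleanest route is to run Algorithm~\ref{alg:der:Lang} in ascending order and observe that at each stage the upward sequences of the current generic multisegment are singletons, so the selection in Step~3 picks precisely the nested chain that realises the intended truncation; the induction is then on $k$ with $[a_1,a_1'-1]_\rho$ processed \emph{first}, not last.
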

\begin{proof}
This follows directly from the derivative algorithm in the Langlands classification (Algorithm \ref{alg:der:Lang}). Alternatively, one can observe that the removal process (Section \ref{sec:removal}) yields
    \[\mathfrak{r}^\mathrm{R}\left(\mathfrak{p}_{\mathfrak{m}_\rho, \mathfrak{n}_\rho}, \mathfrak{m}_\rho \right)=  \mathfrak{t}_{\mathfrak{m}_\rho, \mathfrak{n}_\rho}  =\mathfrak{r}^\mathrm{L}\left(\mathfrak{q}_{\mathfrak{m}_\rho, \mathfrak{n}_\rho},\mathfrak{n}_\rho \right).\]
The equivalence between derivatives and the removal process for generic representations (see Remark \ref{rmk:removal}) then gives the desired isomorphisms.
\end{proof}
Now let $\mathfrak{m}, \mathfrak{n} \in \mathrm{Mult}^{\mathrm{ul}}$ be arbitrary generic multisegments (possibly involving multiple cuspidal lines). Since segments in distinct cuspidal lines are automatically unlinked, we can decompose $\mathfrak{m}$ and $\mathfrak{n}$ as sums over distinct cuspidal types. More precisely, there exist mutually distinct cuspidal representations $\rho_1,...,\rho_k$ (with $\rho_i \not\simeq \nu^x \rho_j$ for any $x \in \mathbb{Z}$ when $i \neq j$) such that \[\mathfrak{m}=\mathfrak{m}_{\rho_1}+...+\mathfrak{m}_{\rho_k} \text{ and } \mathfrak{n}=\mathfrak{n}_{\rho_1}+...+\mathfrak{n}_{\rho_k},\] where each $\mathfrak{m}_{\rho_i}, \mathfrak{n}_{\rho_i} \in \mathrm{Mult}^{\mathrm{ul}}_{\rho_i}$ and it is possible that only one of  $\mathfrak{m}_{\rho_i}$ or $\mathfrak{n}_{\rho_i}$ is empty for a given $i$. For each $i$, we recall the construction of $\mathfrak{p}_{\mathfrak{m}_{\rho_i}, \mathfrak{n}_{\rho_i}}, \mathfrak{q}_{\mathfrak{m}_{\rho_i}, \mathfrak{n}_{\rho_i}}$ and $\mathfrak{t}_{\mathfrak{m}_{\rho_i}, \mathfrak{n}_{\rho_i}}$ as above. Then define
\begin{equation}\label{eq:constr_p_q}
    \mathfrak{p}_{\mathfrak{m}, \mathfrak{n}}= \sum\limits_{i=1}^k \mathfrak{p}_{\mathfrak{m}_{\rho_i}, \mathfrak{n}_{\rho_i}}, ~\mathfrak{q}_{\mathfrak{m}, \mathfrak{n}}= \sum\limits_{i=1}^k \mathfrak{q}_{\mathfrak{m}_{\rho_i}, \mathfrak{n}_{\rho_i}} \text{ and } \mathfrak{t}_{\mathfrak{m}, \mathfrak{n}}= \sum\limits_{i=1}^k \mathfrak{t}_{\mathfrak{m}_{\rho_i}, \mathfrak{n}_{\rho_i}}.
\end{equation}

\begin{lemma}\label{lem:csupp_seg}
    Assume all the above notations. Let $\Delta \in \mathfrak{p}_{\mathfrak{m}, \mathfrak{n}}$ and $\Delta' \in \mathfrak{q}_{\mathfrak{m}, \mathfrak{n}}$. Then at least one of the following holds:
    \begin{itemize}
        \item[(i)] $s(\Delta) > s(\Delta')$
        \item[(ii)] $e(\Delta) > e(\Delta')$
        \item[(iii)] $\Delta ~\cap ~\Delta'=\emptyset$ (i.e. the cuspidal supports of $\Delta$ and $\Delta'$ are different).
    \end{itemize}
\end{lemma}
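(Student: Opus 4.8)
The plan is to reduce immediately to a single cuspidal line. Since $\mathfrak{p}_{\mathfrak{m},\mathfrak{n}} = \sum_i \mathfrak{p}_{\mathfrak{m}_{\rho_i},\mathfrak{n}_{\rho_i}}$ and $\mathfrak{q}_{\mathfrak{m},\mathfrak{n}} = \sum_i \mathfrak{q}_{\mathfrak{m}_{\rho_i},\mathfrak{n}_{\rho_i}}$ with the $\rho_i$ mutually inequivalent after twists, if $\Delta$ and $\Delta'$ come from different cuspidal lines then alternative (iii) holds trivially; so I may assume $\Delta \in \mathfrak{p}_{\mathfrak{m}_\rho,\mathfrak{n}_\rho}$ and $\Delta' \in \mathfrak{q}_{\mathfrak{m}_\rho,\mathfrak{n}_\rho}$ for a fixed $\rho$. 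Now unwind the construction: $\Delta = [a_i, a_i'-1]_\rho$ for some index $i$ in the selection process for $(\mathfrak{m}_\rho,\mathfrak{n}_\rho)$, and $\Delta'$ is either one of the segments $[b_j+1, b_j']_\rho$ for some index $j$, or a leftover segment belonging to $\mathfrak{n}_{k+1}$. I would split into these two cases.

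In the first case $\Delta' = [b_j+1, b_j']_\rho$, I want to show $s(\Delta) = \nu^{a_i}\rho$, $e(\Delta) = \nu^{a_i'-1}\rho$ versus $s(\Delta') = \nu^{b_j+1}\rho$, $e(\Delta') = \nu^{b_j'}\rho$ satisfy (i) or (ii). If $i \le j$, then by the selection process (we always pick the smallest available beginning point from $\mathfrak{m}_\ell$, removing as we go) we have $a_i \le a_j \le b_j < b_j + 1$, so $s(\Delta) = \nu^{a_i}\rho$ — wait, this gives $a_i \le b_j$, i.e. $s(\Delta) \le s(\Delta')$ with possible equality, which is not quite (i). Here I need to exploit the key feature that $\mathfrak{m}_\rho \in \mathrm{Mult}^{\mathrm{ul}}_\rho$: the segments of $\mathfrak{m}_\rho$ are mutually unlinked, hence by the selection rule the chosen beginning points $a_1 < a_2 < \cdots < a_k$ are in fact strictly increasing (two segments with the same beginning point on an unlinked multisegment would be nested, and the process picks the longest, so after removal no segment with that same beginning survives — this needs a short argument, but it forces strict inequality $a_1 < a_2 < \cdots$). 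Granting strictness: if $i < j$ then $a_i < a_j \le b_j < b_j+1$, giving $s(\Delta) < s(\Delta') $, contradicting... no: I want $s(\Delta) > s(\Delta')$ OR $e(\Delta) > e(\Delta')$, so I need the reverse inequalities. Let me reconsider: I should instead show that when $i \ge j$ we get (i), and when $i < j$ we get (ii). For $i \ge j$: then (since the $a$'s strictly increase) $a_i \ge a_j$; and by construction $[a_j', b_j']_\rho$ was selected with $a_j \le a_j' \le b_j \le b_j'$, so $b_j + 1 \le b_j' + 1$, while $a_i \ge a_j$ gives... I need $a_i > b_j + 1$ eventually. The cleanest route: use that the pairs $([a_\ell,b_\ell]_\rho, [a_\ell',b_\ell']_\rho)$ are themselves unlinked within $\mathfrak{n}_\rho$-side as well, and track that $a_i'-1 \ge b_j'$ or $a_i > b_j+1$ by an interleaving argument on the two strictly increasing sequences of beginning points.

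I expect the main obstacle to be precisely this bookkeeping of how the two selection sequences (on $\mathfrak{m}_\rho$ and on $\mathfrak{n}_\rho$) interleave, and in particular establishing strict monotonicity of the selected beginning points using mutual unlinkedness, then carefully casing on the relative position of the indices $i$ and $j$. The second case, $\Delta' \in \mathfrak{n}_{k+1}$ (a segment of $\mathfrak{n}_\rho$ never matched to any $[a_\ell,b_\ell]_\rho$), should be easier: such $\Delta'$ failed the nonemptiness test at every stage, which forces either $s(\Delta') < a_\ell$ for all relevant $\ell$ or $e(\Delta')$ small, and comparing with $\Delta = [a_i,a_i'-1]_\rho$ yields (i) directly since $s(\Delta) = \nu^{a_i}\rho$ sits above $s(\Delta')$; I would just verify this by examining why $\Delta'$ was passed over in the selection step indexed by the $i$ under consideration. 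Throughout, the only inputs needed are the explicit definitions of $\mathfrak{p}, \mathfrak{q}, \mathfrak{t}$ and Lemma~\ref{lem:derivative_R=L}, together with the defining property $\mathfrak{m}_\rho, \mathfrak{n}_\rho \in \mathrm{Mult}^{\mathrm{ul}}_\rho$.
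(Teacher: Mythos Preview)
Your overall approach—reduce to a single cuspidal line, then case-split on whether $\Delta'$ comes from a matched pair $[b_j+1,b_j']_\rho$ or from the leftover $\mathfrak{n}_{k+1}$, and compare indices $i,j$—is exactly what the paper's one-line proof ``This follows from the above construction'' is gesturing at. So the strategy is sound.

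However, two concrete points would cause your argument to fail as written. First, your claim that the selected starts $a_1<a_2<\cdots$ are \emph{strictly} increasing is false: take $\mathfrak{m}_\rho=\{[0,5]_\rho,[0,3]_\rho\}$, which is unlinked (nested), and the process yields $a_1=a_2=0$. What you actually have is $a_1\le a_2\le\cdots$ together with the rule ``longest first'', so when $a_j=a_i$ with $j<i$ you get $b_j\ge b_i$. This weaker input is what you need: for $i>j$, either $a_j<a_i$ (then unlinkedness of $[a_i,b_i]_\rho$ and $[a_j,b_j]_\rho$ forces nesting or a gap, each contradicting $a_i\le b_j+1$ and $b_j+2\le a_i'\le b_i$) or $a_j=a_i$ (then $b_i\le b_j$, again a contradiction). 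For $i<j$ you should instead compare $[a_i',b_i']_\rho$ and $[a_j',b_j']_\rho$ inside $\mathfrak{n}_\rho$: the hypotheses force $a_j'<a_i'$, and unlinkedness then gives either $e(\Delta)>e(\Delta')$ directly or $[a_i',b_i']_\rho\subseteq[a_j',b_j']_\rho$, the latter contradicting maximality of $[a_i',b_i']_\rho$ at step $i$ since $[a_j',b_j']_\rho$ was still available and satisfies the selection constraints. The case $i=j$ gives disjoint cuspidal supports immediately.

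Second, your treatment of $\Delta'\in\mathfrak{n}_{k+1}$ is too quick: it is not true that such $\Delta'$ must have $s(\Delta')<\nu^{a_i}\rho$, so (i) does not follow ``directly''. What actually happens is that assuming all three alternatives fail forces $\Delta'=[c,d]_\rho$ with $c<a_i'$ and $d<b_i'$, and then unlinkedness of $[c,d]_\rho$ with $[a_i',b_i']_\rho$ in $\mathfrak{n}_\rho$ is violated (they become linked). So the leftover case also needs the unlinkedness of $\mathfrak{n}_\rho$, not just the failure of the selection test; you should invoke it explicitly. With these two fixes your case analysis goes through and matches the paper's intended (but unwritten) verification.
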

\begin{proof}
This follows from the construction \eqref{eq:constr_p_q} of $\mathfrak{p}_{\mathfrak{m}, \mathfrak{n}}$ and $\mathfrak{q}_{\mathfrak{m}, \mathfrak{n}}$. For segments coming from the same cuspidal line $\rho_i$, the matching procedure ensures that the segments in $\mathfrak{p}_{\mathfrak{m}_{\rho_i}, \mathfrak{n}_{\rho_i}}$  are “to the left” of those in $\mathfrak{q}_{\mathfrak{m}_{\rho_i}, \mathfrak{n}_{\rho_i}}$ in terms of their starting or ending points. For segments from different cuspidal lines, condition (iii) applies.
\end{proof}

\subsection{Generalized GGP relevance for generic pairs}We now assemble the above constructions to prove that any generic pair is a generalized GGP relevant pair. Recall that in Definition \ref{def:relevant}, we consider the shifted representation $\nu^{1/2} \pi$. Since $\pi$ is generic, $\nu^{1/2} \pi$ is also generic, and its Langlands multisegment is $\nu^{1/2} \mathfrak{m}$.

\begin{proposition}\label{prop:generic}
Let $\pi=L(\mathfrak m)$  be a generic representations of $\mathrm{GL}_{n}(F)$  and  $\pi'=L(\mathfrak n)$ be a generic representations of $\mathrm{GL}_{n'}(F)$. Then, the pair $(\pi, \pi')$ is a generalized GGP relevant pair. Explicitly, it is relevant with respect to the Rd-minimal multisegment $\mathfrak{p}_{\nu^{\frac{1}{2}}\mathfrak{m}, \mathfrak{n}}$ and the Ld-minimal multisegment $\mathfrak{q}_{\nu^\frac{1}{2}\mathfrak{m}, \mathfrak{n}}$.
\end{proposition}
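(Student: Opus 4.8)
\medskip
\noindent\textit{Proof strategy.}
The plan is to verify directly the two conditions of Definition~\ref{def:relevant} for the pair $(\pi,\pi')$ with the witnessing multisegments $\mathfrak{p}:=\mathfrak{p}_{\nu^{1/2}\mathfrak{m},\mathfrak{n}}$ and $\mathfrak{q}:=\mathfrak{q}_{\nu^{1/2}\mathfrak{m},\mathfrak{n}}$. Since $\mathfrak{m}$ is generic, $\nu^{1/2}\mathfrak{m}$ is again unlinked and $\nu^{1/2}\pi=L(\nu^{1/2}\mathfrak{m})$. I would first decompose $\nu^{1/2}\mathfrak{m}$ and $\mathfrak{n}$ along the finitely many cuspidal lines they meet and apply Lemma~\ref{lem:derivative_R=L} one line at a time; derivatives supported on distinct cuspidal lines commute, and $\mathfrak{p},\mathfrak{q},\mathfrak{t}$ are by construction the sums of their cuspidal-line pieces, so this yields
\[
\mathrm{D}^\mathrm{R}_{\mathfrak{p}}\bigl(\nu^{1/2}\pi\bigr)\;\cong\;L\bigl(\mathfrak{t}_{\nu^{1/2}\mathfrak{m},\mathfrak{n}}\bigr)\;\cong\;\mathrm{D}^\mathrm{L}_{\mathfrak{q}}(\pi'),
\]
which is condition~(1); in particular both derivatives are non-zero.

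For condition~(2) I would write $\mathfrak{p}=\Delta_1+\cdots+\Delta_r$ and $\mathfrak{q}=\Delta'_1+\cdots+\Delta'_s$ in ascending order, with partial sums $\mathfrak{p}_i,\mathfrak{q}_j$, and check that each triple $\bigl(\Delta_{i+1},\Delta'_{j+1},\sigma_{i,j}\bigr)$, where $\sigma_{i,j}:=\mathrm{I}^\mathrm{L}_{\mathfrak{q}_j}\circ\mathrm{D}^\mathrm{R}_{\mathfrak{p}_i}(\nu^{1/2}\pi)$, is combinatorially RdLi-commutative, as required by Definition~\ref{def:RdLi}. The key input is Lemma~\ref{lem:csupp_seg}: for $\Delta_{i+1}\in\mathfrak{p}$ and $\Delta'_{j+1}\in\mathfrak{q}$ one always has $s(\Delta'_{j+1})<s(\Delta_{i+1})$, or $e(\Delta'_{j+1})<e(\Delta_{i+1})$, or $\mathrm{csupp}(\Delta_{i+1})\cap\mathrm{csupp}(\Delta'_{j+1})=\emptyset$, which is exactly the geometric hypothesis of Lemma~\ref{lem:example}. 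Hence, provided $\mathrm{D}^\mathrm{R}_{\Delta_{i+1}}(\sigma_{i,j})\neq 0$, Lemma~\ref{lem:example} makes $\bigl(\Delta_{i+1},\Delta'_{j+1},\sigma_{i,j}\bigr)$ a strongly RdLi-commutative triple, hence a combinatorially RdLi-commutative one (for a triple consisting of two segments and a representation, by \cite[Theorem~1.5]{Cha_duality}). This reduces condition~(2) to a single non-vanishing statement for every pair $(i,j)$.

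The remaining step, which I expect to be the main obstacle, is to show $\mathrm{D}^\mathrm{R}_{\Delta_{i+1}}(\sigma_{i,j})\neq 0$. Since $\mathrm{D}^\mathrm{R}_{\mathfrak{p}}(\nu^{1/2}\pi)\neq 0$ by condition~(1), every partial composite $\mathrm{D}^\mathrm{R}_{\Delta_{i+1}}\circ\mathrm{D}^\mathrm{R}_{\mathfrak{p}_i}(\nu^{1/2}\pi)$ is already non-zero, so (using the Remark after Theorem~\ref{thm:der:Lang}) the multisegment $\mathfrak{s}_i:=\mathfrak{r}^\mathrm{R}(\mathfrak{p}_i,\nu^{1/2}\mathfrak{m})$ still contains a segment with beginning $s(\Delta_{i+1})$ and length at least $\ell_{abs}(\Delta_{i+1})$. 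The task is then to see that this survives after applying $\mathrm{I}^\mathrm{L}_{\mathfrak{q}_j}$: by Theorem~\ref{thm:integral}, $\mathrm{I}^\mathrm{L}_{\mathfrak{q}_j}(L(\mathfrak{s}_i))=L\bigl(\mathcal{I}^\mathrm{L}_{\mathfrak{q}_j}(\mathfrak{s}_i)\bigr)$, and $\mathcal{I}^\mathrm{L}$ is the $\Theta$-conjugate of the right-integral algorithm, which only lowers beginnings of segments or adjoins new segments; dually, $\mathcal{I}^\mathrm{L}$ only raises endpoints of segments or adjoins new ones, and in particular never moves a left endpoint. Combined with the separation from Lemma~\ref{lem:csupp_seg} (the segments of $\mathfrak{q}$ driving $\mathcal{I}^\mathrm{L}_{\mathfrak{q}_j}$ begin strictly to the right of $s(\Delta_{i+1})$ or lie in a different cuspidal line), this shows $\mathcal{I}^\mathrm{L}_{\mathfrak{q}_j}(\mathfrak{s}_i)$ still has a segment beginning at $s(\Delta_{i+1})$ of length at least $\ell_{abs}(\Delta_{i+1})$, so $\mathrm{D}^\mathrm{R}_{\Delta_{i+1}}(\sigma_{i,j})\neq 0$ by Algorithm~\ref{alg:der:Lang}. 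The careful accounting of which segments of $\mathfrak{s}_i$ are modified by $\mathcal{I}^\mathrm{L}_{\mathfrak{q}_j}$, together with keeping track of the ascending orders implicit in the compositions defining $\mathrm{D}^\mathrm{R}_{\mathfrak{p}_i}$ and $\mathrm{I}^\mathrm{L}_{\mathfrak{q}_j}$, is the technical heart of the argument.

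Finally, to conclude I would note that $\mathfrak{t}_{\nu^{1/2}\mathfrak{m},\mathfrak{n}}$ is itself an unlinked multisegment, so by the Remark after Theorem~\ref{thm:der:Lang} the operator $\mathrm{D}^\mathrm{R}_{\mathfrak{p}}$ realises the removal process on $\nu^{1/2}\mathfrak{m}$, and $\mathfrak{p}_{\nu^{1/2}\mathfrak{m},\mathfrak{n}}$ is the minimal multisegment with that effect, hence Rd-minimal to $\nu^{1/2}\pi$; applying $\Theta$ gives the corresponding statement for $\mathfrak{q}_{\nu^{1/2}\mathfrak{m},\mathfrak{n}}$ being Ld-minimal to $\pi'$. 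Together with conditions~(1) and~(2) this shows $(\pi,\pi')$ is generalized GGP relevant with the stated pair of minimal multisegments, and combining with \cite[Theorem~4.1]{Cha_qbl} recovers the branching law of \cite{JPSS} for generic representations as an immediate corollary.
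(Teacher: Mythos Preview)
Your proposal follows the same strategy as the paper's proof: condition~(1) via Lemma~\ref{lem:derivative_R=L} along each cuspidal line, condition~(2) via Lemma~\ref{lem:csupp_seg} feeding into Lemma~\ref{lem:example}, and minimality from the fact that $\mathfrak{p}$ and $\mathfrak{q}$ are unlinked. The paper's write-up is terser, but the skeleton is identical.

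Where you diverge is in the handling of the non-vanishing $\mathrm{D}^\mathrm{R}_{\Delta_{i+1}}(\sigma_{i,j})\neq 0$, which you flag as the ``main obstacle'' and propose to attack by tracking how $\mathcal{I}^\mathrm{L}_{\mathfrak{q}_j}$ moves segment endpoints. This is more work than needed, and your sketch has a slip (Lemma~\ref{lem:csupp_seg} says $s(\Delta')<s(\Delta)$, i.e.\ the $\mathfrak{q}$-segments begin to the \emph{left}, not to the right, and you also have to deal with the third alternative $e(\Delta')<e(\Delta)$). The cleaner route, implicit in the paper, is a bootstrap on $j$: for $j=0$ the non-vanishing comes from $\mathrm{D}^\mathrm{R}_{\mathfrak{p}}(\nu^{1/2}\pi)\neq 0$; and once $\mathrm{D}^\mathrm{R}_{\Delta_{i+1}}(\sigma_{i,j})\neq 0$, Lemma~\ref{lem:example} applied to $(\Delta_{i+1},\Delta'_{j+1},\sigma_{i,j})$ already yields $\eta_{\Delta_{i+1}}\bigl(\mathrm{I}^\mathrm{L}_{\Delta'_{j+1}}(\sigma_{i,j})\bigr)=\eta_{\Delta_{i+1}}(\sigma_{i,j})$, whose first coordinate is $\varepsilon^\mathrm{R}_{\Delta_{i+1}}$, so $\varepsilon^\mathrm{R}_{\Delta_{i+1}}(\sigma_{i,j+1})=\varepsilon^\mathrm{R}_{\Delta_{i+1}}(\sigma_{i,j})\ge 1$ and hence $\mathrm{D}^\mathrm{R}_{\Delta_{i+1}}(\sigma_{i,j+1})\neq 0$. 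In other words, the very conclusion of Lemma~\ref{lem:example} carries the non-vanishing forward for you, so no separate algorithmic analysis of $\mathcal{I}^\mathrm{L}$ is required.
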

\begin{proof}
We first verify the derivative matching conditions of Definition \ref{def:relevant}. Using Lemma \ref{lem:derivative_R=L} and the fact that the construction respects decomposition over cuspidal lines, we compute
\[\mathrm{D}^\mathrm{R}_{\mathfrak{p}_{\nu^{1/2}\mathfrak{m}, \mathfrak{n}}}(\nu^{1/2}\pi) \cong   \mathrm{D}^\mathrm{R}_{\mathfrak{p}_{\nu^{1/2}\mathfrak{m}, \mathfrak{n}}}\left(L(\nu^{1/2}\mathfrak{m})\right) 
 \cong  \prod\limits_{i} \mathrm{D}^\mathrm{R}_{\mathfrak{p}_{\nu^{1/2}\mathfrak{m}_{\rho_i}, \mathfrak{n}_{\rho_i}}}\left(L\left(\nu^{1/2}\mathfrak{m}_{\rho_i}\right)\right)
 \cong  \prod\limits_{i} L\left(\mathfrak{t}_{\nu^{1/2}\mathfrak{m}_{\rho_i}, \mathfrak{n}_{\rho_i}}\right).\]
Similarly, 
\[ \mathrm{D}^\mathrm{L}_{\mathfrak{q}_{\nu^{1/2}\mathfrak{m}, \mathfrak{n}}}(\pi') \cong  \prod\limits_{i} \mathrm{D}^\mathrm{L}_{\mathfrak{q}_{\nu^{1/2}\mathfrak{m}_{\rho_i}, \mathfrak{n}_{\rho_i}}}(L(\mathfrak{n}_{\rho_i})) \cong  \prod\limits_{i} L\left(\mathfrak{t}_{\nu^{1/2}\mathfrak{m}_{\rho_i}, \mathfrak{n}_{\rho_i}}\right). \]

We now show that $\left(\mathfrak{p}_{\nu^{\frac{1}{2}}\mathfrak{m}, \mathfrak{n}}, \mathfrak{q}_{\nu^{\frac{1}{2}}\mathfrak{m}, \mathfrak{n}}, \nu^\frac{1}{2} \pi \right)$ is  a strongly RdLi-commutative triple. By Definition \ref{def:RdLi}, this requires checking that for any $\Delta \in \mathfrak{p}_{\nu^\frac{1}{2}\mathfrak{m}, \mathfrak{n}}$ and $\Delta' \in \mathfrak{q}_{\nu^\frac{1}{2}\mathfrak{m}, \mathfrak{n}}$, the triple $(\Delta, \Delta', \sigma)$ is stongly RdLi-commutative for certain intermediate representations $\sigma$. By Lemma \ref{lem:csupp_seg}, any such pair $(\Delta, \Delta')$ satisfies one of the sufficient conditions in Lemma \ref{lem:example}. Specifically, either the segments lie in distinct cuspidal lines, or they satisfy $s(\Delta)> s(\Delta')$ or $e(\Delta)> e(\Delta')$. In all cases, Lemma 2.1 guarantees that $(\Delta, \Delta', \sigma)$ is a strongly RdLi-commutative triple for any $\sigma$ with $\mathrm{D}^\mathrm{R}_\Delta(\sigma) \ne 0$. The required intermediate representations in Definition \ref{def:RdLi} are precisely of this form, so the condition holds.

By construction, $\mathfrak{p}_{\nu^{\frac{1}{2}}\mathfrak{m}, \mathfrak{n}}$ and $\mathfrak{q}_{\nu^\frac{1}{2}\mathfrak{m}, \mathfrak{n}}$ lie in $\mathrm{Mult}^\mathrm{ul}$ and so, they are minimal with respect to the partial order $\le_Z$. Therefore, we conclude that  $(\mathfrak{p}_{\nu^{\frac{1}{2}}\mathfrak{m}, \mathfrak{n}}, \mathfrak{q}_{\nu^{\frac{1}{2}}\mathfrak{m}, \mathfrak{n}}, \nu^\frac{1}{2} \pi)$ is generalized GGP relevant for the Rd-minimal $\mathfrak{p}_{\nu^{\frac{1}{2}}\mathfrak{m}, \mathfrak{n}}$ and the Ld-minimal $\mathfrak{q}_{\nu^\frac{1}{2}\mathfrak{m}, \mathfrak{n}}$.
\end{proof}

\begin{corollary}\cite{JPSS}\label{cor:generic}
Let $\pi$ be a generic representations of $\mathrm{GL}_{n+1}(F)$ and $\pi'$ be a generic representations of $\mathrm{GL}_{n}(F)$. Then, \[\mathrm{dim}_\mathbb{C} ~\mathrm{Hom}_{\mathrm{GL}_{n}(F)}(\pi, \pi') = 1.\]    
\end{corollary}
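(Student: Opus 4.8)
The plan is to derive the corollary as an immediate consequence of Proposition \ref{prop:generic} together with the main branching theorem \cite[Theorem 4.1]{Cha_qbl} and the multiplicity-one result of Aizenbud--Gourevitch--Rallis--Schiffmann \cite{AGRS}. First I would invoke Proposition \ref{prop:generic} with the pair $(\pi, \pi')$ of generic representations of $\mathrm{GL}_{n+1}(F)$ and $\mathrm{GL}_n(F)$: writing $\pi = L(\mathfrak{m})$ and $\pi' = L(\mathfrak{n})$ with $\mathfrak{m}, \mathfrak{n} \in \mathrm{Mult}^\mathrm{ul}$, the proposition produces the explicit multisegments $\mathfrak{p}_{\nu^{1/2}\mathfrak{m}, \mathfrak{n}}$ and $\mathfrak{q}_{\nu^{1/2}\mathfrak{m}, \mathfrak{n}}$ witnessing that $(\pi, \pi')$ is a generalized GGP relevant pair in the sense of Definition \ref{def:relevant}. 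By \cite[Theorem 4.1]{Cha_qbl}, this gives $\mathrm{Hom}_{\mathrm{GL}_n(F)}(\pi, \pi') \neq 0$, so the dimension is at least $1$.

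Next I would combine this lower bound with the upper bound. The AGRS multiplicity-one theorem \cite{AGRS}, quoted in the introduction, asserts $\dim_\mathbb{C} \mathrm{Hom}_{\mathrm{GL}_n(F)}(\pi, \pi') \le 1$ for all irreducible smooth $\pi$ of $\mathrm{GL}_{n+1}(F)$ and $\pi'$ of $\mathrm{GL}_n(F)$, in particular for generic ones. Sandwiching the two inequalities yields $\dim_\mathbb{C} \mathrm{Hom}_{\mathrm{GL}_n(F)}(\pi, \pi') = 1$, which is exactly the assertion of the corollary.

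There is essentially no obstacle here: all the substantive work has been carried out in Proposition \ref{prop:generic}, whose proof already verifies both conditions of Definition \ref{def:relevant} — the strong RdLi-commutativity via Lemma \ref{lem:example} and Lemma \ref{lem:csupp_seg}, and the derivative-matching identity $\mathrm{D}^\mathrm{R}_{\mathfrak{p}_{\nu^{1/2}\mathfrak{m}, \mathfrak{n}}}(\nu^{1/2}\pi) \cong \mathrm{D}^\mathrm{L}_{\mathfrak{q}_{\nu^{1/2}\mathfrak{m}, \mathfrak{n}}}(\pi')$ via Lemma \ref{lem:derivative_R=L}. The only point worth a sentence is the passage from $\mathrm{Mult}^\mathrm{ul}$ to genericity: an irreducible representation $L(\mathfrak{m})$ is generic precisely when $\mathfrak{m}$ consists of pairwise unlinked segments, so the hypothesis of Proposition \ref{prop:generic} is met and the construction of $\mathfrak{p}, \mathfrak{q}$ applies verbatim. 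If one prefers a self-contained statement, one can also note that the equality recovers the Rankin--Selberg / Jacquet--Piatetski-Shapiro--Shalika result \cite{JPSS}, so the corollary serves as a combinatorial reproof of that classical fact rather than relying on it.
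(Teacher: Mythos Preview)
Your proposal is correct and follows essentially the same approach as the paper: the paper's proof reads verbatim that the result ``follows immediately from Proposition \ref{prop:generic}, \cite[Theorem 4.1]{Cha_qbl}, and the multiplicity one theorem in \cite{AGRS},'' which is precisely the three-step argument you outline.
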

\begin{proof}
  This follows immediately from Proposition \ref{prop:generic} and \cite[Theorem 4.1]{Cha_qbl}.
\end{proof}

\begin{remark}
    The construction of $\mathfrak{p}_{\nu^{1/2}\mathfrak{m}, \mathfrak{n}}$ and $\mathfrak{q}_{\nu^{1/2}\mathfrak{m}, \mathfrak{n}}$ is explicit and can be carried out by hand for any given generic pair. This provides the “generic step” in Algorithm \ref{alg:relevant}, where, after a series of reductions, we obtain a generic pair and can directly read off the Rd-minimal and Ld-minimal multisegments.
\end{remark}

\section{Algorithm for general quotient branching}\label{sec:alg}
In this section, we present an algorithm that determines, for any given pair of irreducible smooth representations $(\pi, \pi')$, whether they form a generalized GGP relevant pair (Definition \ref{def:relevant}). By Chan's main theorem \cite[Theorem 4.1]{Cha_qbl}, this is equivalent to determining whether $\mathrm{Hom}_{\mathrm{GL}_{n}(F)}(\pi, \pi') \ne 0$ when $\pi$ and $ \pi'$ are representations of $\mathrm{GL}_{n+1}(F)$ and $\mathrm{GL}_{n}(F)$, respectively.

The algorithm is designed to be executable by hand (or by computer), given the Langlands or Zelevinsky data of the representations. It proceeds by systematically reducing the pair to a generic pair, where the relevance condition is easily verified (Section \ref{sec:generic}), and then checking a series of admissibility conditions in reverse order. The key ingredients are the algorithms for derivatives and highest derivative multisegments developed in Section \ref{sec:der_int}, together with the reduction and interchange results established below.

\subsection{Reductions process}
To give an algorithm for generalized quotient branching, we need the following reduction results. These results allow us to simplify a pair $(\pi, \pi')$ by removing certain `highest' or `lowest' components that do not affect the relevance condition. The techniques of proofs are based on \cite{Cha_qbl} and the properties of minimal multisegments from \cite{Cha_csq_iii}.

We begin with some terminology. For a cuspidal representation $\rho$, an irreducible representation $\pi$ is called strongly $R$-$\rho$-reduced (resp. strongly
$L$-$\rho$-reduced) if for any segment $\Delta$ with ending $e(\Delta) \cong \rho$ (resp. starting $s(\Delta) \cong \rho$), we have $\mathrm{D}^\mathrm{R}_\Delta(\pi)=0 ~ (\text{resp. }\mathrm{D}^\mathrm{L}_\Delta(\pi)=0)$. These notions capture when $\pi$ (resp. $\pi'$) has no "room" to take further derivatives at the cuspidal $\rho$.

\begin{proposition}[Reduction from the right]\label{prop:left_reduction}
Let $\pi, \pi' \in \mathrm{Irr}$ and $\rho$ be a $\leq$-maximal element in $\mathrm{csupp}(\pi)$. Suppose $\rho \notin \mathrm{csupp}(\nu^{-\frac{1}{2}}\pi')$. Define the multisegment
\[\mathfrak m^e_\rho(\pi)= \sum\limits_{\Delta \in \mathfrak{hd}^\mathrm{R}(\pi)_{e=\rho}} \Delta.\]
Then the following hold:
\begin{itemize}
    \item[(A)](\textbf{Reduction}:) If $(\pi, \pi')$ is a generalized GGP relevant pair, then $\left(\mathrm{D}^\mathrm{R}_{\mathfrak m^e_\rho(\pi)}(\pi), \pi'\right)$ is also generalized GGP relevant.
    \item[(B)](\textbf{Admissibility}:) Suppose  $\left(\mathrm{D}^\mathrm{R}_{\mathfrak m^b_\rho(\pi)}(\pi), \pi'\right)$ is a generalized GGP relevant pair with Rd-minimal $\mathfrak m$ and Ld-minimal $\mathfrak n$. If the {\it admissibility condition}  \[\mathrm{D}^\mathrm{R}_{\mathfrak m + \nu^{\frac{1}{2}}\mathfrak m^e_\rho(\pi)}(\nu^{\frac{1}{2}}\pi) \neq 0\] holds, then $(\pi, \pi')$ is also generalized GGP relevant with Rd-minimal $\mathfrak m + \nu^{\frac{1}{2}}\mathfrak m^e_\rho(\pi)$ and Ld-minimal $\mathfrak n$.
\end{itemize}
\end{proposition}
\begin{proof}
(A) As $(\pi, \pi')$ is a generalized GGP relevant pair, there exist a Rd-minimal $\mathfrak{m}$ and a Rd-minimal $\mathfrak{n}$ such that 
\begin{equation}\label{eq:P1}
 \left(\mathfrak{m}',\mathfrak{n},\nu^{1/2}\pi\right) \text{ is a minimal strongly RdLi-commutative triple,}  
\end{equation}
and
\begin{equation}\label{eq:P2}
   \mathrm{D}^\mathrm{R}_\mathfrak{m'}\left( \nu^{1/2}\pi\right) \cong \mathrm{D}^\mathrm{L}_\mathfrak{n}\left( \pi'\right).
\end{equation}
Let $\mathfrak{p}=\mathfrak m^e_\rho(\pi)$. Then, \[\nu^{1/2} \mathfrak{p}=\sum\limits_{\Delta \in \mathfrak{hd}^\mathrm{R}\left(\nu^{1/2}\pi\right)_{e=\nu^{1/2}\rho}} \Delta.\]
Since $\nu^{1/2}\rho \notin \mathrm{csupp}(\pi')$, by \eqref{eq:P2} one can observe that $\mathrm{D}^\mathrm{R}_\mathfrak{m'}\left( \nu^{1/2}\pi\right)$ is strongly $R$-$\nu^{1/2}\rho$-reduced and so $\eta_\Delta\left(\mathrm{D}^\mathrm{R}_\mathfrak{m'}\left( \nu^{1/2}\pi\right)\right)=0$ for all $\Delta \in \nu^{1/2} \mathfrak{p}$. Therefore, by \cite[Lemma 19.3]{Cha_qbl}, we have $\nu^{1/2} \mathfrak{p} \subset \mathfrak{m'}$. Now, apply \cite[Corollary 15.10]{Cha_qbl} on \eqref{eq:P1} to conclude that $\left(\mathfrak{m'}-\nu^{1/2} \mathfrak{p},\mathfrak{n},\nu^{1/2}  \mathrm{D}^\mathrm{R}_\mathfrak{p}(\pi)\right)$ is also a  minimal strongly RdLi-commutative triple. Also, isomorphism \eqref{eq:P2} gives
 \[\mathrm{D}^\mathrm{R}_{\mathfrak{m'}- \nu^{1/2} \mathfrak{p}}\left(\nu^{1/2}  \mathrm{D}^\mathrm{R}_{\mathfrak{p}}\left( \pi\right)\right) \cong \mathrm{D}^\mathrm{R}_{\mathfrak{m'}- \nu^{1/2} \mathfrak{p}}\circ  \mathrm{D}^\mathrm{R}_{\nu^{1/2}\mathfrak{p}}\left( \nu^{1/2}\pi\right) \cong \mathrm{D}^\mathrm{R}_\mathfrak{m'}\left( \nu^{1/2}\pi\right) \cong \mathrm{D}^\mathrm{L}_\mathfrak{n}\left( \pi'\right).\] 
Therefore, $\left(\mathrm{D}^\mathrm{R}_{\mathfrak m^e_\rho(\pi)}(\pi), \pi'\right)$ is also a generalized GGP relevant pair with Rd-minimal $\mathfrak{m'}- \nu^{1/2} \mathfrak m^e_\rho(\pi)$ and Ld-minimal $\mathfrak{n}$.

(B) We set $\mathfrak{p}=\mathfrak m^e_\rho(\pi)$. Given that $\left(\mathfrak{m}, \mathfrak{n},\nu^\frac{1}{2}\mathrm{D}^\mathrm{R}_{\mathfrak{p}}\left( \pi\right) \right)$ is a minimal strongly RdLi-commutative triple and
\begin{equation}\label{eq:P3}
 0\ne \mathrm{D}^\mathrm{R}_\mathfrak{m} \circ \mathrm{D}^\mathrm{R}_{\nu^{1/2}\mathfrak{p}}\left( \nu^\frac{1}{2} \pi\right) \cong   \mathrm{D}^\mathrm{R}_\mathfrak{m}\left( \nu^\frac{1}{2}\mathrm{D}^\mathrm{R}_{\mathfrak{p}}\left( \pi\right)\right) \cong \mathrm{D}^\mathrm{L}_\mathfrak{n}\left( \pi'\right).
\end{equation}
The admissibility condition $\mathrm{D}^\mathrm{R}_{\mathfrak m + \nu^{1/2}\mathfrak p}(\nu^{\frac{1}{2}}\pi) \neq 0$ and \eqref{eq:P3} ensure that 
\begin{equation}\label{eq:P4}
 \mathrm{D}^\mathrm{R}_{\mathfrak m + \nu^{1/2}\mathfrak p}(\nu^{\frac{1}{2}}\pi) \cong \mathrm{D}^\mathrm{R}_\mathfrak{m} \circ \mathrm{D}^\mathrm{R}_{\nu^{1/2}\mathfrak{p}}\left( \nu^\frac{1}{2} \pi\right)   
\end{equation}
and there exists a minimal multisegment $\mathfrak{m}'$ such that   $\mathrm{D}^\mathrm{R}_\mathfrak{m'} \left( \nu^\frac{1}{2} \pi\right) \cong \mathrm{D}^\mathrm{R}_{\mathfrak m + \nu^{1/2}\mathfrak p}\left(\nu^{\frac{1}{2}}\pi\right)$. Therefore, by \eqref{eq:P3} and \eqref{eq:P4} we have
\[\mathrm{D}^\mathrm{R}_\mathfrak{m'} \left( \nu^\frac{1}{2} \pi\right) \cong \mathrm{D}^\mathrm{L}_\mathfrak{n}\left( \pi'\right).\]
By a similar argument as in the above part (A), we have $\nu^\frac{1}{2} \mathfrak{p} \subset \mathfrak{m'}$.  Now by \cite[Theorem 1.4]{Cha_csq_iii}, we have $\mathfrak{m}'-\nu^{1/2}\mathfrak{p}$ is Rd-minimal to $\mathrm{D}^\mathrm{R}_{\nu^{1/2}\mathfrak{p}}\left( \nu^\frac{1}{2} \pi\right)$ and 
\begin{equation}\label{eq:P5}
 \mathrm{D}^\mathrm{R}_{\mathfrak{m}'-\nu^{1/2}\mathfrak{p}} \left( \mathrm{D}^\mathrm{R}_{\nu^{1/2}\mathfrak{p}}\left( \nu^\frac{1}{2} \pi\right)\right) \cong   \mathrm{D}^\mathrm{R}_\mathfrak{m'} \left( \nu^\frac{1}{2} \pi\right) \cong \mathrm{D}^\mathrm{R}_\mathfrak{m} \circ \mathrm{D}^\mathrm{R}_{\nu^{1/2}\mathfrak{p}}\left( \nu^\frac{1}{2} \pi\right)
\end{equation}
where the last isomorphism follows from \eqref{eq:P4}. As $\mathfrak{m}$ is also Rd-minimal to $\mathrm{D}^\mathrm{R}_{\nu^{1/2}\mathfrak{p}}\left( \nu^\frac{1}{2} \pi\right)$, by \eqref{eq:P5} and the uniqueness of minimality, we have $\mathfrak{m}'-\nu^{1/2}\mathfrak{p}=\mathfrak{m}$. Therefore, $\mathfrak{m}+\nu^{1/2}\mathfrak{p}$ is minimal to $\nu^\frac{1}{2}\pi$. By Lemma \ref{lem:example}, $(\nu^{1/2}\mathfrak{p}, \mathfrak{n}, \nu^\frac{1}{2}\pi)$ is a strongly RdLi-commutative triple. Also given that $\left(\mathfrak{m}, \mathfrak{n},\mathrm{D}^\mathrm{R}_{\nu^{1/2}\mathfrak{p}}\left(\nu^\frac{1}{2} \pi\right) \right)$ is a strongly RdLi-commutative triple. Therefore, by \cite[Lemma 15.8]{Cha_qbl} and Rd-minimality of $\mathfrak{m}+\nu^{1/2}\mathfrak{p}$ for $\nu^\frac{1}{2}\pi$, we have that $\left(\mathfrak{m}+\nu^{1/2}\mathfrak{p}, \mathfrak{n}, \nu^\frac{1}{2}\pi\right)$ is also a strongly RdLi-commutative triple.
\end{proof}

\begin{proposition}[Reduction from the left]\label{prop:right_reduction}
Let $\pi, \pi' \in \mathrm{Irr}$ and $\rho'$ be a $\leq$-minimal element in $\mathrm{csupp}(\pi')$. Suppose $\rho' \notin \mathrm{csupp}(\nu^{\frac{1}{2}}\pi)$. Define the multisegment
\[\mathfrak m^s_{\rho'}(\pi')= \sum\limits_{\Delta \in \mathfrak{hd}^\mathrm{L}(\pi)_{s=\rho'}} \Delta.\]
Then the following hold:
\begin{itemize}
    \item[(A)](\textbf{Reduction}:) If $(\pi, \pi')$ is a generalized GGP relevant pair, then $\left(\pi, ~\mathrm{D}^\mathrm{L}_{\mathfrak m^s_{\rho'}(\pi')}(\pi')\right)$ is also generalized GGP relevant.
    \item[(B)](\textbf{Admissibility}:) Suppose  $\left(\pi, ~\mathrm{D}^\mathrm{L}_{\mathfrak m^s_{\rho'}(\pi')}(\pi')\right)$ is a generalized GGP relevant pair with Rd-minimal $\mathfrak m$ and Ld-minimal $\mathfrak n$. If the {\it admissibility condition} \[\mathrm{D}^\mathrm{L}_{\mathfrak n + \mathfrak m^s_{\rho'}(\pi')}(\pi') \neq 0\] holds, then $(\pi, \pi')$ is also generalized GGP relevant with Rd-minimal $\mathfrak m $ and Ld-minimal $\mathfrak n + \mathfrak m^s_{\rho'}(\pi')$.
\end{itemize}
\end{proposition}
\begin{proof}
    This is the left analogue of Proposition \ref{prop:left_reduction}. 
\end{proof}

\subsection{Interchange preserves relevance}
The following results allow us to swap the roles of $\pi$ and $\pi'$ when neither reduction step applies directly. This is crucial for the algorithm, as it ensures that we can always make progress toward a generic pair.
\begin{lemma}\cite[Theorem 1.4]{Cha_tams}\label{lem:interchange_1}
    Let $\pi \in \mathrm{Irr}$ and $\mathfrak{m} \in \mathrm{Mult}$. Then the following hold:
    \begin{itemize}
        \item If $\mathrm{D}^\mathrm{R}_\mathfrak{m}(\pi) \neq 0$, then for $\mathfrak{n}= \mathfrak{r}^\mathrm{R}\left(\mathfrak{m}, \mathfrak{hd}^\mathrm{R}(\pi)\right)$, we have
    \[\mathrm{D}^\mathrm{R}_\mathfrak{n} \circ \mathrm{D}^\mathrm{R}_\mathfrak{m}(\pi) \cong \pi^-,\] where $\pi^-$ denotes the highest right Bernstein-Zelevinsky derivative of $\pi$.
    \item If $\mathrm{D}^\mathrm{L}_\mathfrak{m}(\pi) \neq 0$, then for $\mathfrak{n}= \mathfrak{r}^\mathrm{L}\left(\mathfrak{m}, \mathfrak{hd}^\mathrm{L}(\pi)\right)$, we have
    \[\mathrm{D}^\mathrm{L}_\mathfrak{n} \circ \mathrm{D}^\mathrm{L}_\mathfrak{m}(\pi) \cong {^-}\pi ,\] where ${^-}\pi$ denotes the highest left Bernstein-Zelevinsky derivative of $\pi$.
    \end{itemize}
    
\end{lemma}

The following result directly follows from the proof of \cite[Theorem 17.4]{Cha_qbl}, which already proves the existence of such an $\mathfrak{n}$ but we need the exact expression of $\mathfrak{n}$

\begin{lemma}\label{lem:interchange_2}
    Let $\pi \in \mathrm{Irr}$ and $\mathfrak{m} \in \mathrm{Mult}$. Then for $\mathfrak{n}= \nu^{-1}\mathfrak{r}^\mathrm{L}\left(\mathfrak{m}, \mathfrak{hd}^\mathrm{L}(\mathrm{I}^\mathrm{L}_\mathfrak{m}(\pi))\right)$, we have
    \begin{equation}\label{eq:A1}
     {^-}\left(\mathrm{I}^\mathrm{L}_\mathfrak{n} \circ \mathrm{I}^\mathrm{L}_\mathfrak{m}(\pi) \right)\cong \pi \text{ and } \mathrm{lev}\left( \mathrm{I}^\mathrm{L}_\mathfrak{n} \circ \mathrm{I}^\mathrm{L}_\mathfrak{m}(\pi)\right) = \mathrm{lev}\left(\mathrm{I}^\mathrm{L}_\mathfrak{m}(\pi) \right).   
    \end{equation}
\end{lemma}
\begin{proof} We follow the same line of argument of \cite[Proof of Theorem 17.4]{Cha_qbl}. There exists $\mathfrak{p} \in \mathrm{Mult}$ such that $\mathrm{I}^\mathrm{L}_\mathfrak{m}(\pi) \cong Z(\mathfrak{p})$ and we denote $\tau=Z(\mathfrak{p})$. We define, $\tau_+ = Z\left(\sum\limits_{\Delta \in \mathfrak{p}} \Delta^+ \right)$. By \cite[Theorem 1.3]{Cha_tams}, there exists $\mathfrak{h} \in \mathrm{Mult}$ such that $\mathrm{D}^\mathrm{R}_\mathfrak{h}(\tau_+) \cong (\tau_+)^- \cong \tau$, where the last isomorphism follows from the algorithm of highest Bernstein-Zelevinsky derivative in Zelevinsky classification. Thus, we have $\tau_+ \cong \mathrm{I}^\mathrm{R}_\mathfrak{h}(\tau)$. By Lemma \ref{lem:interchange_1}, we have, 
\begin{equation}\label{eq:A2}
 \mathrm{D}^\mathrm{L}_{\nu \cdot \mathfrak{n}} \circ \mathrm{D}^\mathrm{L}_\mathfrak{m}(\mathrm{I}^\mathrm{R}_\mathfrak{h}(\tau)) \cong {^-}\left(\mathrm{I}^\mathrm{R}_\mathfrak{h}(\tau)\right) ~\text{ i.e., } ~\mathrm{D}^\mathrm{L}_{\nu \cdot \mathfrak{n}} \circ \mathrm{D}^\mathrm{L}_\mathfrak{m}(\tau_+) \cong {^-}(\tau_+),   
\end{equation}
for $\nu \cdot \mathfrak{n}= \mathfrak{r}^\mathrm{L}\left(\mathfrak{m}, \mathfrak{hd}^\mathrm{L}(\tau_+)\right)$. Claim 2. of \cite[The proof of Theorem 17.4]{Cha_qbl} follows from \eqref{eq:A2} and hence by \cite[The proof of Theorem 17.4]{Cha_qbl}, the equation \eqref{eq:A1} follows for $\mathfrak{n}=\nu^{-1} \cdot \mathfrak{r}^\mathrm{L}\left(\mathfrak{m}, \mathfrak{hd}^\mathrm{L}(\tau_+)\right)$. As $s(\Delta)=s(\Delta^+)$ for $\Delta \in \mathfrak{p}$, by Algorithm \ref{alg:hd:multisegment:zel},  we observe that
\begin{equation}\label{eq:A3}
 \mathfrak{hd}^\mathrm{L}(\tau) = \mathfrak{hd}^\mathrm{L}(\tau_+)   
\end{equation}
Hence, the result follows.
\end{proof}

\begin{proposition}[Interchange formula]\label{prop:interchange}
 Let $\pi, \pi^\prime \in \mathrm{Irr}$ and $(\pi, \pi^\prime)$ is a generalized GGP relevant with respect to Rd-minimal $\mathfrak{m}$ and Ld-minimal $\mathfrak{n}$. Then, $(\pi^\prime, \pi)$ is also a generalized GGP relevant with respect to Rd-minimal $\mathfrak{p}$ and Ld-minimal $\mathfrak{q}$, where
 \[\mathfrak{p}= \nu^\frac{1}{2} \mathfrak{r}^\mathrm{R}\left(\mathfrak{m}, \mathfrak{hd}^\mathrm{R}\left(\mathrm{I}^\mathrm{L}_{\mathfrak{n}}  \left( \nu^\frac{1}{2}\pi\right)\right)\right) = \nu^\frac{1}{2} \mathfrak{r}^\mathrm{R}\left(\mathfrak{m}, \mathfrak{hd}^\mathrm{R}\left(\mathrm{I}^\mathrm{R}_{\mathfrak{m}}  \left(\pi'\right)\right)\right)\] and 
\[\mathfrak{q}= \nu^{-\frac{1}{2}} \mathfrak{r}^\mathrm{L}\left(\mathfrak{n}, \mathfrak{hd}^\mathrm{L}\left(\mathrm{I}^\mathrm{L}_{\mathfrak{n}}  \left( \nu^\frac{1}{2}\pi\right)\right)\right)=\nu^{-\frac{1}{2}} \mathfrak{r}^\mathrm{L}\left(\mathfrak{n}, \mathfrak{hd}^\mathrm{L}\left(\mathrm{I}^\mathrm{R}_{\mathfrak{m}}  \left( \pi'\right)\right)\right).\]   
\end{proposition}
\begin{proof}
We now recall the proof of \cite[Theorem 18.1]{Cha_qbl}. It says that $(\pi',\pi)$ is a generalized GGP relevant pair under Rd-minimal $\nu^\frac{1}{2}\mathfrak{m}'$ and Ld-minimal $\nu^\frac{1}{2}\mathfrak{n}'$, where $\mathfrak{n}'$ is a multisegment such that
\[{^-}\left(\mathrm{I}^\mathrm{L}_{\mathfrak{n}'} \circ \mathrm{I}^\mathrm{L}_{\mathfrak{n}} \left( \nu^\frac{1}{2}\pi\right)  \right) \cong  \nu^\frac{1}{2}\pi,\] and $\mathfrak{m}'$ is a multisegment such that
\[\mathrm{D}^\mathrm{R}_{\mathfrak{m}'} \circ\mathrm{D}^\mathrm{R}_{\mathfrak{m}}\left( \mathrm{I}^\mathrm{L}_{\mathfrak{n}} \left( \nu^\frac{1}{2}\pi\right)  \right) \cong \left( \mathrm{I}^\mathrm{L}_{\mathfrak{n}}  \left( \nu^\frac{1}{2}\pi\right)\right)^-.\]
Therefore, by Lemma \ref{lem:interchange_1} and Lemma \ref{lem:interchange_2}, we conclude that $(\pi',\pi)$ is generalized GGP relevant pair for Rd-minimal $\mathfrak{p}$ and Ld-minimal $\mathfrak{q}$, where
\[\mathfrak{p}=\nu^\frac{1}{2}\mathfrak{m}' =\nu^\frac{1}{2} \mathfrak{r}^\mathrm{R}\left(\mathfrak{m}, \mathfrak{hd}^\mathrm{R}\left(\mathrm{I}^\mathrm{L}_{\mathfrak{n}}  \left( \nu^\frac{1}{2}\pi\right)\right)\right) \text{ and }\mathfrak{q}= \nu^\frac{1}{2}\mathfrak{n}' =\nu^{-\frac{1}{2}} \mathfrak{r}^\mathrm{L}\left(\mathfrak{n}, \mathfrak{hd}^\mathrm{L}\left(\mathrm{I}^\mathrm{L}_{\mathfrak{n}}  \left( \nu^\frac{1}{2}\pi\right)\right)\right).\]
\end{proof}

\subsection{An algorithm for general GGP relevant pairs} We now present the algorithm in full detail.

\begin{algorithm}\label{alg:relevant}
Suppose $\pi \in \mathrm{Irr}(\mathrm{GL}_n(F))$  and $\pi' \in \mathrm{Irr}(\mathrm{GL}_{n'}(F))$ are given in terms of their Langlands or Zelevinsky data. We wish to determine whether $(\pi, \pi')$ is a generalized GGP relevant pair. Set the initial pair $(\pi_1, \pi'_1)=(\pi, \pi')$.
\begin{itemize}
    \item {\bf Step 1 (Reduction from the right).} Let $\rho_1$ be a $\le$-maximal element in $\mathrm{csupp}(\pi_1)$ (i.e., a cuspidal representation with the largest possible value in the ordering). If $\rho_1 \notin \mathrm{csupp}(\nu^{-\frac{1}{2}}\pi_1')$, then define 
    \[\mathfrak{m}_1=\mathfrak m^e_{\rho_1}(\pi_1)=\sum\limits_{\Delta \in \mathfrak{hd}^\mathrm{R}(\pi_1)_{e=\rho_1}} \Delta,\] and set
    \[(\pi_2, \pi'_2)=\left(\mathrm{D}^\mathrm{R}_{\mathfrak m_1}(\pi_1), \pi_1'\right).\] Record that a reduction step has been applied, along with the multisegment $\mathfrak{m}_1$. Then proceed to Step 4.
    \item {\bf Step 2 (Reduction from the left).} If no such $\rho_1$ exists in Step 1, let $\rho_1'$ be a $\le$-minimal element in $\mathrm{csupp}(\pi_1')$ (i.e., a cuspidal representation with the smallest possible value in the ordering). If $\rho_1' \notin \mathrm{csupp}(\nu^{\frac{1}{2}}\pi_1)$, then define 
    \[\mathfrak{n}_1=\mathfrak m^s_{\rho_1'}(\pi_1')=\sum\limits_{\Delta \in \mathfrak{hd}^\mathrm{L}(\pi_1')_{s=\rho_1'}} \Delta,\] and set
    \[(\pi_2, \pi'_2)=\left(\pi_1, ~\mathrm{D}^\mathrm{L}_{\mathfrak n_1}(\pi'_1)\right).\] Record that a reduction step has been applied, along with the multisegment $\mathfrak{n}_1$. Then proceed to Step 4.
    \item {\bf Step 3 (Interchange).} If neither Step 1 nor Step 2 applies (i.e., every maximal cuspidal in $\pi_1$ lies in $\mathrm{csupp}(\nu^{-\frac{1}{2}}\pi_1')$ and every minimal cuspidal in $\pi_1'$ lies in $\mathrm{csupp}(\nu^{\frac{1}{2}}\pi_1)$, then interchange the pair:
    \[(\pi_2, \pi'_2)=(\pi'_1, \pi_1).\] Record that an interchange step has been applied. Then return to Step 1 with the new pair $(\pi_2, \pi'_2)$. (Note that after interchange, a reduction step will be possible because the cuspidal support conditions are now satisfied.)
    \item {\bf Step 4 (Repetition).} Set $k=2$ and continue the process. After a finite number of steps, we reach a pair $(\pi_r, \pi'_r)$ where both representations are generic. This termination is guaranteed because each reduction step strictly decreases the rank of the representation. Record the sequence of steps (reduction with recorded multisegments, or interchange) that led from $(\pi_1, \pi'_1)$ to $(\pi_r, \pi'_r)$. This completes the reduction process.
    \item {\bf Step 5 (Generic step).} Now $(\pi_r, \pi'_r)$ is a pair of generic representations. By Proposition \ref{prop:generic}, this pair is a generalized GGP relevant pair. Moreover, we can explicitly construct the Rd-minimal multisegment $\mathfrak p_r$ and Ld-minimal multisegment $\mathfrak q_r$ for which the generic pair $(\pi_r, \pi'_r)$ is a generalized GGP relevant, using the construction in Section \ref{sec:construction}:
    \[\mathfrak p_r= \mathfrak{p}_{\nu^{1/2}\mathfrak{w},\mathfrak{w}'} \text{ and }\mathfrak q_r= \mathfrak{q}_{\nu^{1/2}\mathfrak{w},\mathfrak{w}'},\]
    where $\pi_r=L(\mathfrak{w})$ and $ \pi'_r=L(\mathfrak{w}')$ in Langlands classification for the multisegments $\mathfrak{w} $ and $\mathfrak{w}'$.
    \item {\bf Step 6 (Backward propagation).} Now we work backwards through the recorded steps, from $i=r-1$ down to $i=1$, constructing Rd-minimal and Ld-minimal multisegments for each intermediate pair.

     {\bf Case 1. If the $i$-th step was a reduction step:}
        \begin{itemize}
            \item If it was a reduction from the right (using $\mathfrak{m}_i$ as recorded), then set
            \[\mathfrak{p}_i=\mathfrak{p}_{i+1} + \nu^{1/2}\mathfrak{m}_i \text{ and }\mathfrak{q}_{i}=\mathfrak{q}_{i+1}.\] Check the admissibility condition:
            \[\mathrm{D}^\mathrm{R}_{\mathfrak{p}_i}(\nu^{1/2}\pi_i) \ne 0.\] If the condition fails, terminate the algorithm — the original pair $(\pi,\pi')$ is not a generalized GGP relevant pair.
            \item If it was a reduction from the left (using $\mathfrak{n}_i$ as recorded), then set
            \[\mathfrak{p}_i=\mathfrak{p}_{i+1}  \text{ and }\mathfrak{q}_{i}=\mathfrak{q}_{i+1} + \mathfrak{n}_i.\] Check the admissibility condition:
            \[\mathrm{D}^\mathrm{L}_{\mathfrak{q}_i}(\pi_i') \ne 0.\] If the condition fails, terminate the algorithm — the original pair $(\pi,\pi')$ is not a generalized GGP relevant pair.
        \end{itemize}
        
     {\bf Case 2. If the $i$-th step was an interchange step:} Apply Proposition \ref{prop:interchange} to obtain $\mathfrak{p}_i$ and $\mathfrak{q}_i$ from $\mathfrak{p}_{i+1}$ and $\mathfrak{q}_{i+1}$. No admissibility check is required for interchange steps, as the interchange formula always yields a valid relevant pair.
    \item {\bf Step 7 (Conclusion).} If the algorithm successfully reaches $i=1$ without termination, then we have constructed multisegments $\mathfrak{p}_1$ and $\mathfrak{q}_1$ such that
    \[\mathrm{D}^\mathrm{R}_{\mathfrak{p}_1}(\nu^{1/2}\pi_1) \ne 0, ~ \mathrm{D}^\mathrm{R}_{\mathfrak{p}_1}(\nu^{1/2}\pi_1) \cong \mathrm{D}^\mathrm{L}_{\mathfrak{q}_1}(\pi_1'),\] and $(\mathfrak{p}_1,\mathfrak{q}_1, \nu^{1/2}\pi_1)$ is a strongly RdLi-commutative triple. Therefore, $(\pi,\pi^{\prime})$ is a generalized GGP relevant pair. 
\end{itemize}
\end{algorithm}

\begin{theorem}\label{thm:main} (Justification for Algorithm \ref{alg:relevant})
 Let $\pi$ and $\pi'$ be irreducible smooth representations. Then the pair $(\pi, \pi')$ is a generalized GGP relevant pair if and only if the admissibility conditions hold at every reduction step in Algorithm \ref{alg:relevant} (i.e., the algorithm terminates successfully at Step 7).
\end{theorem}
\begin{proof}
The forward direction follows from Propositions \ref{prop:left_reduction} and \ref{prop:right_reduction}: if $(\pi, \pi')$ is relevant, then each reduction step yields a relevant pair, and the admissibility conditions must hold by the properties of minimal multisegments. The backward direction follows from the admissibility conditions in Propositions \ref{prop:left_reduction} and \ref{prop:right_reduction}, which allow us to reconstruct the relevance from the reduced pair, together with Proposition \ref{prop:interchange} for interchange steps. The base case (generic pair) is handled by Proposition \ref{prop:generic}.
\end{proof}

\subsection{Some examples}\label{sec:example}
We now provide several examples to demonstrate the algorithm in practice. These examples illustrate the various steps of the algorithm: reduction from the right, reduction from the left, interchange, and the importance of admissibility checks.
\begin{example}[Relevant pair]
Let \[\pi=L\left(\left[\frac{1}{2},4\frac{1}{2}\right]+ \left[3\frac{1}{2},6\frac{1}{2}\right] \right) \text{ and } \pi'=L([0,3]+[3,6])\] be the irreducible smooth representations of $\mathrm{GL}_9(F)$ and  $\mathrm{GL}_8(F)$ respectively. We show that the pair $(\pi,\pi')$ is a generalized GGP relevant, and hence, $\mathrm{Hom_{GL_8(F)}}(\pi,\pi') \ne 0$.

Step 1 (Reduction from the right): We observe that $\nu^{6\frac{1}{2}}$ is $\le$-maximal element in $\mathrm{csupp}(\pi)$ and $\nu^{6\frac{1}{2}} \notin \mathrm{csupp}(\nu^{-1/2}\pi')$. Compute $\mathfrak{hd}^\mathrm{R}(\pi)=[\frac{1}{2}, 1\frac{1}{2}]+ [3\frac{1}{2}, 6\frac{1}{2}]$. Thus, we have $\mathfrak m_1=\mathfrak m^e_{\nu^{6\frac{1}{2}}}(\pi)=  [3\frac{1}{2}, 6\frac{1}{2}]$. Applying the right reduction process of Algorithm \ref{alg:relevant}, we set \[\pi_2= \mathrm{D}^\mathrm{R}_{[3\frac{1}{2}, 6\frac{1}{2}]}(\pi)= L \left( [\frac{1}{2}, 4\frac{1}{2}]\right) \text{ and }\pi_2'=\pi'.\] 

Step 2 (Reduction from the left): The $\le$-minimal cuspidal in $\mathrm{csupp}(\pi_2')$ is $\nu^{0}$ and $\nu^{0} \notin \mathrm{csupp}(\nu^{1/2}\pi_2)$. Compute $\mathfrak{hd}^\mathrm{L}(\pi_2')=[0,3]+[5,6] $. Thus, $\mathfrak n_2=\mathfrak m^s_{\nu^{0}}(\pi_2')=  [0,3]$. Applying the left reduction process of Algorithm \ref{alg:relevant}, we set \[\pi_3'= \mathrm{D}^\mathrm{L}_{[0,3]}(\pi_2')= L \left( [3,6]\right) \text{ and }\pi_3=\pi_2.\] 

Step 3 (Generic step): Now $\pi_3$ and $ \pi_3'$ are essentially square-integrable (hence generic). By Proposition \ref{prop:generic}, $(\pi_3, \pi_3')$ is relevant under Rd-minimal $\mathfrak{p}_3=[1,2]$ and Ld-minimal $\mathfrak{q}_3=[6,6]$. 

Step 4 (Backward propagation): Now, we need to check the admissibility conditions for both reduction processes. 
\begin{itemize}
    \item For the left reduction (Step 2): $\mathfrak{q}_2=\mathfrak{q}_3 +  \mathfrak n_2=[0,3]+[6,6]$ and  $\mathfrak{p}_2=\mathfrak{p}_3=[1,2]$. Check admissibility:
\[\mathrm{D}^\mathrm{L}_{\mathfrak{q}_2}(\pi'_2)=\mathrm{D}^\mathrm{L}_{[0,3]} \circ \mathrm{D}^\mathrm{L}_{[6]}(\pi_2')= \mathrm{D}^\mathrm{L}_{[0,3]}(L([0,3]+[3,5]))=L([3,5]) \neq 0,\]
    \item For the right reduction (Step 1): Here, $\mathfrak{p}=\mathfrak{p}_2 + \nu^{1/2}\mathfrak{m}_1=[1,2]+[4,7]$ and $\mathfrak{q}=\mathfrak{q}_2=[0,3]+[6,6]$. Check admissibility: \[\mathrm{D}^\mathrm{R}_\mathfrak{p}(\nu^\frac{1}{2} \pi)=\mathrm{D}^\mathrm{R}_{[4,7]} \circ \mathrm{D}^\mathrm{R}_{[1,2]}(\nu^\frac{1}{2} \pi)= \mathrm{D}^\mathrm{R}_{[4,7]}(L\left([3,5]+[4,7]\right))= L\left([3,5]\right) \neq 0,\]
\end{itemize}
Thus the algorithm terminates successfully, confirming that $(\pi,\pi')$ is generalized relevant pair under RdLd-minimal multisegments $(\mathfrak{p},\mathfrak{q})$.
\end{example}

\begin{example}[Relevant pair with interchange]
Let \[\pi=L\left(\left[-\frac{1}{2},2\frac{1}{2}\right]+ \left[2\frac{1}{2},5\frac{1}{2}\right] \right) \text{ and } \pi'=L([1,3]+[6,9])\] be the irreducible smooth representations of $\mathrm{GL}_8(F)$ and  $\mathrm{GL}_7(F)$ respectively. We show that $(\pi,\pi')$ is a generalized GGP relevant pair. 

Step 1 (Interchange): We observe that $\nu^{11/2}$ is $\le$-maximal element of $\mathrm{csupp}(\pi)$ with $\nu^{11/2} \in \mathrm{csupp}(\nu^{-1/2}\pi')$ and $\nu^{1}$ is $\le$-minimal element of $\mathrm{csupp}(\pi')$ with $\nu^{1} \in \mathrm{csupp}(\nu^{1/2}\pi)$. Both reduction conditions fail initially, so we interchange: $(\pi_2,\pi_2')=(\pi', \pi)$.

Step 2 (Reduction from the right on $\pi_2$): The $\le$-maximal element in $\mathrm{csupp}(\pi_2)$ is $\nu^{9}$, and $\nu^{9} \notin \mathrm{csupp}(\nu^{-1/2}\pi_2')$. Compute $\mathfrak{hd}^\mathrm{R}(\pi_2)=[1,3]+[6,9]$, and thus, $\mathfrak m_2= \mathfrak m^e_{\nu^9}(\pi_2)=  [6,9]$. Applying the reduction process of Algorithm \ref{alg:relevant}, we get \[\pi_3= \mathrm{D}^\mathrm{R}_{[6,9]}(\pi_2)= L \left( [1,3]\right), ~\pi_3'=\pi_2'.\] 

Step 2 (Reduction from the left on $\pi_3'$): We observe $\nu^{-1/2}$ is the $\le$-minimal element in $\mathrm{csupp}(\pi_3')$ and $\nu^{-1/2} \notin \mathrm{csupp}(\nu^{1/2}\pi_3)$. Compute $\mathfrak{hd}^\mathrm{L}(\pi_3')=\left[-\frac{1}{2},2\frac{1}{2}\right]+ \left[4\frac{1}{2},5\frac{1}{2}\right] $, and thus, we have $\mathfrak n_3=\mathfrak m^s_{\nu^{-1/2}}(\pi_3')= \left[-\frac{1}{2},2\frac{1}{2}\right]$. Applying the left reduction process of Algorithm \ref{alg:relevant}, we set \[\pi_4'= \mathrm{D}^\mathrm{L}_{\left[-\frac{1}{2},2\frac{1}{2}\right]}(\pi_3')= L \left( \left[2\frac{1}{2},5\frac{1}{2}\right]\right) \text{ and }\pi_4=\pi_3.\] 

Step 4 (Generic step): As $\pi_4$ and $ \pi_4'$ are essentially square integrable (hence generic) representations, the pair $(\pi_4,\pi_4')$ is a generalized GGP relevant. In fact, by  Proposition \ref{prop:generic}, $(\pi_4, \pi_4')$ is relevant under Rd-minimal $\mathfrak{p}_4=\left[1\frac{1}{2},1\frac{1}{2}\right]$ and Ld-minimal $\mathfrak{q}_4=\left[4\frac{1}{2},5\frac{1}{2}\right]$.

Step 5 (Backward propagation): 
\begin{itemize}
    \item For the left reduction (Step 3): we have $\mathfrak{q}_3=\mathfrak{q}_4 +  \mathfrak n_3=\left[4\frac{1}{2},5\frac{1}{2}\right]+\left[-\frac{1}{2},2\frac{1}{2}\right]$ and  $\mathfrak{p}_3=\mathfrak{p}_4$. Check admissibility: (using Algorithm \ref{alg:der:Lang})
\[\mathrm{D}^\mathrm{L}_{\mathfrak{q}_3}(\pi'_3)=\mathrm{D}^\mathrm{L}_{\left[-\frac{1}{2},2\frac{1}{2}\right]} \circ \mathrm{D}^\mathrm{L}_{\left[4\frac{1}{2},5\frac{1}{2}\right]}(\pi_2')=L\left(\left[2\frac{1}{2},3\frac{1}{2}\right]\right) \neq 0.\]
    \item For the right reduction (Step 2): we have $\mathfrak{p}_2=\mathfrak{p}_3 + \nu^{1/2}\mathfrak m_2=\left[1\frac{1}{2}\right]+\left[6\frac{1}{2},9\frac{1}{2}\right]$ and $\mathfrak{q}_2=\mathfrak{q}_3$. Check admissibility: (using Algorithm \ref{alg:der:Lang}) \[\mathrm{D}^\mathrm{R}_\mathfrak{p_2}(\nu^\frac{1}{2} \pi_2)=\mathrm{D}^\mathrm{R}_{\left[6\frac{1}{2},9\frac{1}{2}\right]} \circ \mathrm{D}^\mathrm{R}_{\left[1\frac{1}{2},1\frac{1}{2}\right]}(\nu^\frac{1}{2} \pi_2)= L\left(\left[2\frac{1}{2},3\frac{1}{2}\right]\right) \neq 0,\]
    \item Interchange (Step 1): Proposition \ref{prop:interchange} yields $(\mathfrak{p}, \mathfrak{q})$ for the original pair $(\pi,\pi')$ using $(\mathfrak{p}_2, \mathfrak{q}_2)$.
\end{itemize}
Therefore, the pair $(\pi,\pi')=(\pi_2', \pi_2)$ is a generalized relevant pair under multisegment $(\mathfrak{p}, \mathfrak{q})$.
 
\end{example}

\begin{example}[Non-relevant pair]
Let \[\pi=L\left(\left[\frac{1}{2},4\frac{1}{2}\right]+ \left[3\frac{1}{2},6\frac{1}{2}\right] + \left[5\frac{1}{2},6\frac{1}{2}\right] \right) \text{ and } \pi'=L([0,3]+[1,2]+[3,6])\] be the irreducible smooth representations of $\mathrm{GL}_{11}(F)$ and  $\mathrm{GL}_{10}(F)$ respectively. We show that $(\pi,\pi')$ is not a generalized GGP relevant pair (hence $\mathrm{Hom_{GL_{10}(F)}}(\pi,\pi') = 0$).   

Step 1 (Reduction from the right on $\pi$): The $\le$-maximal element of $\mathrm{csupp}(\pi)$ is $\nu^{13/2}$, and $\nu^{13/2} \notin \mathrm{csupp}(\nu^{-1/2}\pi')$.  Compute \[\mathfrak{hd}^\mathrm{R}(\pi)=\left[\frac{1}{2},1\frac{1}{2}\right]+ \left[3\frac{1}{2},6\frac{1}{2}\right] + \left[5\frac{1}{2},6\frac{1}{2}\right]\]
Thus, $\mathfrak m_1=\mathfrak m^e_{\nu^{13/2}}(\pi)=  \left[3\frac{1}{2},6\frac{1}{2}\right] + \left[5\frac{1}{2},6\frac{1}{2}\right]$. Applying the right reduction process of Algorithm \ref{alg:relevant}, set \[\pi_2= \mathrm{D}^\mathrm{R}_{\mathfrak m_1}(\pi)= L \left( \left[\frac{1}{2},4\frac{1}{2}\right]\right)\text{ and } \pi_2'=\pi'.\]

Step 2 (Reduction from the left on $\pi_2'$): The $\le$-minimal element in $\mathrm{csupp}(\pi_2')$ is $\nu^{0}$ and $\nu^{0} \notin \mathrm{csupp}(\nu^{1/2}\pi_2)$. Compute $\mathfrak{hd}^\mathrm{L}(\pi_2')=[0,2]+[1]+[3,6]$ and thus, $\mathfrak{n}_2=\mathfrak m^s_{\nu^{0}}(\pi_2')=  [0,3]$. The left reduction yields \[\pi_3'= \mathrm{D}^\mathrm{L}_{[0,3]}(\pi_2')= L \left( [1,2]+[3,6]\right) \text{ and }\pi_3=\pi_2.\]

Step 3 (Interchange): Further, both reduction conditions fail now, so we interchange: $(\pi_4,\pi_4' )=(\pi_3',\pi_3)$.

Step 4 (Reduction from the right on $\pi_4$): The maximal cuspidal $\nu^{6} \notin \mathrm{csupp}(\nu^{-1/2}\pi_4')$. Since $\mathfrak{hd}^\mathrm{R}(\pi)=[1]+ [3,6]$, we have $\mathfrak{m}_4=\mathfrak m^e_{\nu^{6}}(\pi_4)=  \left[3,6\right]$. The right reduction of Algorithm \ref{alg:relevant} yields \[\pi_5= \mathrm{D}^\mathrm{R}_{[3,6]}(\pi_4)= L \left( \left[1,2\right]\right) \text{ and } \pi_5'=\pi_4'=L \left( \left[\frac{1}{2},4\frac{1}{2}\right]\right).\]

Step 5 (Generic step): By  Proposition \ref{prop:generic}, the generic pair $(\pi_5, \pi_5')$ is generalized GGP relevant under Rd-minimal $\mathfrak{p}_5=\left[1\frac{1}{2},2\frac{1}{2}\right]$ and Ld-minimal $\mathfrak{q}_5=\left[\frac{1}{2},4\frac{1}{2}\right]$.

Step 6  {\bf Admissibility failure}: When propagating backward, we have $\mathfrak{p}_4=\mathfrak{p}_5+ \nu^{1/2}\mathfrak{m}_4=\left[1\frac{1}{2},2\frac{1}{2}\right]+\left[3\frac{1}{2},6\frac{1}{2}\right]$. By Algorithm \ref{alg:der:Lang}, we have $\mathrm{D}^\mathrm{R}_{\left[1\frac{1}{2},2\frac{1}{2}\right]}(\nu^{1/2}\pi_4)=0$ and hence,
\[ \mathrm{D}^\mathrm{R}_{\mathfrak{p}_4 }\left(\nu^{1/2}\pi_4\right)   =\mathrm{D}^\mathrm{R}_{\left[3\frac{1}{2},6\frac{1}{2}\right]}   \circ \mathrm{D}^\mathrm{R}_{\left[1\frac{1}{2},2\frac{1}{2}\right]}\left(\nu^{1/2}\pi_4\right)=0.\]
Therefore, the admissibility condition fails, and the algorithm terminates with the conclusion that $(\pi,\pi')$ is not a generalized GGP relevant pair.
\end{example}

\begin{example}[Non-relevant pair]
Let \[\pi=L\left(\left[-\frac{1}{2},2\frac{1}{2}\right]+\left[2\frac{1}{2},5\frac{1}{2}\right]+\left[4\frac{1}{2},5\frac{1}{2}\right] \right) \text{ and }\pi'=L([0,1]+[1,4]+[7,9]).\] One can similarly verify (by carrying out the algorithm) that this pair is not a generalized GGP relevant. We leave the details to the interested reader.
\end{example}

\section{Derivatives and quasi-Speh representations} 
For a unitary cuspidal representation $\rho$, and positive integers $u,v$, the unitary Speh multisegment is \[\mathfrak{m}_\rho(u,v)=\sum\limits_{j= -(v-1)/2 }^{(v-1)/2}\nu^j \Delta_\rho(u), \text{ where }\Delta_\rho(u)=\left[-\frac{u-1}{2},\frac{u-1}{2} \right]_\rho.\]
Further, we denote $\mathfrak{m}_\rho(u,0)=\emptyset$ and $\mathfrak{m}_\rho(u,-1)=\emptyset$. Following \cite{Gur}*{Section 4.1}, for a non-negative integer $w \le u$, we define the right quasi-Speh multisegment 
\[\mathfrak{m}_\rho(u,v,R^w)=\sum\limits_{j={-\frac{v-1}{2}}}^{\frac{v-1}{2}-1} \nu^j\Delta_\rho(u) + \nu^\frac{v-1}{2}\left[-\frac{u-1}{2}+w, \frac{u-1}{2}\right]_\rho,\] and the left quasi-Speh multisegment
\[\mathfrak{m}_\rho(u,v,L_w)=\nu^{-\frac{v-1}{2}}\left[-\frac{u-1}{2}, \frac{u-1}{2}-w\right]_\rho + \sum\limits_{j={-\frac{v-1}{2}}+1}^{\frac{v-1}{2}} \nu^j\Delta_\rho(u) .\]
The corresponding irreducible representations via the Langlands classification are denoted as \[\pi_\rho(u,v,R^w)=L(\mathfrak{m}_\rho(u,v,R^w)) \text{ and } \pi_\rho(u,v,L_w)=L(\mathfrak{m}_\rho(u,v,L_w)).\]
When $w =0$, we have $\pi_\rho(u,v,R^0)=\pi_\rho(u,v)=\pi_\rho(u,v,L_0)$ and when $w =u$, we have $\pi_\rho(u,v,R^u)=\nu^{-\frac{1}{2}}\pi_\rho(u,v-1)$ and $\pi_\rho(u,v,L_u)=\nu^{\frac{1}{2}}\pi_\rho(u,v-1)$. For convenience, we set
\[ \mathfrak{m}^-_\rho(u,v)=\mathfrak{m}_\rho(u,v,R^u) \text{ and } {^-}\mathfrak{m}_\rho(u,v)=\mathfrak{m}_\rho(u,v,L_u).\]
For any $-\frac{1}{2}<\alpha <\frac{1}{2}$, we also consider the shifted quasi-Speh multisegments $\mathfrak{m}_\rho(u,v,R^w)\nu^\alpha$ and $\mathfrak{m}_\rho(u,v,L_w)\nu^\alpha$, and the corresponding representations
\[\pi_\rho(u,v,R^w)\nu^\alpha=L\left(\mathfrak{m}_\rho(u,v,R^w)\nu^\alpha\right) \text{ and } \pi_\rho(u,v,L_w)\nu^\alpha=L\left(\mathfrak{m}_\rho(u,v,L_w)\nu^\alpha\right).\]
Finally, we introduce a pre-order on the set of right quasi-Speh representations (and analogously for left ones):
\[\pi_\rho(u_1,v_1,R^{w_1})\nu^{\alpha_1} \preceq \pi_\rho(u_2,v_2,R^{w_2})\nu^{\alpha_2},\]
if $u_1+v_1+\alpha_1 < u_2+v_2+\alpha_2$ or if $u_1+v_1+\alpha_1 = u_2+v_2+\alpha_2$ and $u_1 \le u_2$. 

Explicit combinatorial descriptions of the Bernstein–Zelevinsky derivatives of Speh representations and their complementary series have been established by Lapid and Mínguez \cites{LM14, LM16} (see also \cites{BZ2, Zel} for the general theory). In this section, we recall the properties of St-derivatives that will be used throughout the paper. We begin with some basic facts about derivatives of irreducible unitary representations.

\begin{lemma}\label{lem:der_Speh}
Consider a unitary cuspidal $\rho$, positive integers $u,v$ and real number $-\frac{1}{2}<\alpha <\frac{1}{2}$. For a segment $\Delta$, we have
 \[\mathrm{D}^\mathrm{R}_\Delta \left(\pi_\rho(u,v) \nu^\alpha \right)=\begin{cases}
      \pi_\rho(u,v,R^w)  \nu^\alpha &\mbox{ if } \Delta=\nu^{\frac{v-1}{2}+\alpha}\left[-\frac{u-1}{2}, -\frac{u-1}{2}+w-1\right]_\rho \text{ for some } 0 \le w \le u\\
      0 &\mbox{ otherwise},
 \end{cases}\] 
 and
  \[\mathrm{D}^\mathrm{L}_\Delta \left(\pi_\rho(u,v) \nu^\alpha \right)=\begin{cases}
      \pi_\rho(u,v,L_w)  \nu^\alpha &\mbox{ if } \Delta=\nu^{-\frac{v-1}{2}+\alpha}\left[\frac{u-1}{2}-w+1, \frac{u-1}{2}\right]_\rho \text{ for some } 0 \le w \le u\\
      0 &\mbox{ otherwise}.
 \end{cases}.\] 
\end{lemma}
\begin{proof}
    The proof immediately follows Algorithm \ref{alg:der:Lang}. 
\end{proof}

\begin{lemma}\label{lem:der_complementary}
 Consider a unitary cuspidal representation $\rho$, positive integers $u,v$ and a real number $0<\alpha <\frac{1}{2}$. Then, for a multisegment $\mathfrak{p}$, we have 
  \[\mathrm{D}^\mathrm{R}_\mathfrak{p} \left(\pi_\rho(u,v)(\alpha) \right)=\begin{cases}
      L \left(\mathfrak{m}_\rho(u,v,R^{w_1}) \nu^{-\alpha}+\mathfrak{m}_\rho(u,v,R^{w_2}) \nu^{\alpha} \right) &\mbox{ for some integers } 0 \le w_1,w_2\le u\\
      0 &\mbox{ otherwise}.
 \end{cases}\]  
\end{lemma}
\begin{proof}
The two multisegments $\mathfrak{m}_\rho(u,v)\nu^\alpha$ and $\mathfrak{m}_\rho(u,v)\nu^{-\alpha}$ lies in distinct cuspidal lines (because $\alpha \ne -\alpha$ modulo $\mathbb{Z}$). Therefore, by the basic properties of derivatives, the right derivative with respect to any segment $\Delta$ either vanishes or factors through one of the two components. Then, Lemma \ref{lem:der_Speh} yields the stated result. 
\end{proof}

\begin{proposition}\label{prop:mult_der}
Let $\pi=\pi_\rho(u_1, v_1)\nu^{\alpha_1} \times ... \times \pi_\rho(u_r, v_r)\nu^{\alpha_r}$ be an irreducible unitary representation with $\rho$ unitary cuspidal, $-\frac{1}{2}<\alpha_i <\frac{1}{2}$, and $u_i,v_i$ positive integers. Suppose $\mathfrak{p} \in \mathrm{Mult}_\rho$ is such that  $\mathrm{D}^\mathrm{R}_{\mathfrak{p}}(\pi) \ne 0$. Then there exist integers $0 \le w_i \le u_i$ such that
\[\mathrm{D}^\mathrm{R}_{\mathfrak{p}}(\pi) = L \left(\mathfrak{m}_\rho(u_1,v_1,R^{w_1}) \nu^{\alpha_1}+...+\mathfrak{m}_\rho(u_r,v_r,R^{w_r}) \nu^{\alpha_r} \right).\] 
An analogous statement holds for left derivatives $\mathrm{D}^\mathrm{L}_{\mathfrak{p}}(\pi)$ with $R^{w_i}$ replaced by $L_{w_i}$.
\end{proposition}

\begin{proof}
Let $\ell$ be the length of the multisegment $\mathfrak{p}$. By \cites{Cha_tams, Cha_qbl}, $\mathrm{D}^\mathrm{R}_{\mathfrak{p}}(\pi)$  is a simple quotient of the Bernstein-Zelevinsky derivative $\pi^{(\ell)}$. According to \cite{BZ2}*{Corollary 4.14}, $\pi^{(\ell)}$ admits a filtration whose successive quotients are representations of the form
\begin{align*}
  \left(\pi_\rho(u_1, v_1)\nu^{\alpha_1}\right)^{(\ell_1)} \times ... \times \left(\pi_\rho(u_r, v_r)\nu^{\alpha_r}\right)^{(\ell_r)},
\end{align*}
 for some non-negative integers $\ell_j$ such that $\ell=\ell_1+...+\ell_r$. Therefore, $\mathrm{D}^\mathrm{R}_{\mathfrak{p}}(\pi)$ is a simple quotient of one such product $\pi_\rho(u_1, v_1)^{(\ell_1)}\nu^{\alpha_1} \times ... \times \pi_\rho(u_r, v_r)^{(\ell_r)}\nu^{\alpha_r}$. For each $j$, the iterated derivative $\left(\pi_\rho(u_j, v_j)\nu^{\alpha_j}\right)^{(\ell_j)}$ is either zero or an irreducible representation. By the combinatorial description of derivatives of Speh representations established in \cite{LM14} (see also Lemma \ref{lem:der_Speh}), any such nonzero derivative is isomorphic to a Langlands quotient of the form
\[L \left( \mathfrak{m}_\rho(u_j,v_j,R^{w_j}) \nu^{\alpha_j}  \right) \text{ for some unique integer } 0 \le  w_j \le u_j.\]
Because $\pi$ is irreducible, the factors $\pi_\rho(u_i,v_i)\nu^{\alpha_i}$ can be assumed to be ordered so that
\[\pi_\rho(u_i,v_i)\nu^{\alpha_i} \preceq \pi_\rho(u_j,v_j)\nu^{\alpha_j} \text{ for }i<j.\]
The pre-order on quasi-Speh representations depends only on $u,v, \alpha$ and not on $w$. Therefore, the same ordering holds for the factors $L \left( \mathfrak{m}_\rho(u_i,v_i,R^{w_i}) \nu^{\alpha_i}  \right)$ i.e., we have 
 \[\pi_\rho(u_i,v_i,R^{w_i})\nu^{\alpha_i} \preceq \pi_\rho(u_j,v_j,R^{w_j})\nu^{\alpha_j}  \text{ for i < j}.\]
Consequently, the parabolic induced representation $L \left( \mathfrak{m}_\rho(u_1,v_1,R^{w_1}) \nu^{\alpha_1}  \right) \times ... \times L \left( \mathfrak{m}_\rho(u_r,v_r,R^{w_r}) \nu^{\alpha_r}  \right)$ satisfies the conditions of \cite{Gur}*{Proposition 4.3}. Therefore, $L \left( \mathfrak{m}_\rho(u_1,v_1,R^{w_1}) \nu^{\alpha_1}  \right) \times ... \times L \left( \mathfrak{m}_\rho(u_r,v_r,R^{w_r}) \nu^{\alpha_r}  \right)$ has the unique irreducible quotient of the form
\[L \left(\mathfrak{m}_\rho(u_1,v_1,R^{w_1}) \nu^{\alpha_1}+...+\mathfrak{m}_\rho(u_r,v_r,R^{w_r}) \nu^{\alpha_r} \right).\]
Since $\mathrm{D}^\mathrm{R}_{\mathfrak{p}}(\pi)$ is a simple quotient of this induced representation, we conclude that \[\mathrm{D}^\mathrm{R}_{\mathfrak{p}}(\pi) = L \left(\mathfrak{m}_\rho(u_1,v_1,R^{w_1}) \nu^{\alpha_1}+...+\mathfrak{m}_\rho(u_r,v_r,R^{w_r}) \nu^{\alpha_r} \right).\] 
\end{proof}

\begin{lemma}\label{lem:req}
Let $[a,b]_\rho=\left[-\frac{u-1}{2} + \beta + \frac{v-1}{2}, \frac{u-1}{2}+ \beta + \frac{v-1}{2} \right]_\rho$ and $\Delta_j^\mathrm{bot}$ be the bottom segment of the ladder multisegment $\mathfrak{n}=\mathfrak{m}_\rho (u_j,v_j)\nu^{\beta_j}$ (resp. $\mathfrak{m}^-_\rho(u_j,v_j)\nu^{\beta_j}$) for some positive integers $u, z, u_j, v_j$ and real numbers $-\frac{1}{2}<\beta, \beta_j <\frac{1}{2}$. If $[x,y] \in \mathfrak{n}$ with $[a,b]_\rho \prec [x,y]_\rho$, then $\Delta_j^\mathrm{bot}$ starts with at most $\nu^a \rho$ i.e.,
\[s_\rho \left( \Delta_j^\mathrm{bot}  \right) \le a.\]
\end{lemma}
\begin{proof}
If possible, let $s_\rho \left( \Delta_j^\mathrm{bot} \right) > a$. As $s_\rho \left( \Delta_j^\mathrm{bot}  \right) - a \in \mathbb{Z}$, we have $s_\rho \left( \Delta_j^\mathrm{bot}  \right) \ge a+1$ and so,
\begin{align}\label{eq:LA1}
s_\rho \left( \Delta_j^\mathrm{bot} \right)&= -\frac{u_j-1}{2} + \beta_j - \frac{v_j-1}{2} \ge  -\frac{u-1}{2} + \beta + \frac{v-1}{2} +1 \nonumber \\
 \implies & -\frac{u_j-1}{2}  - \frac{v_j-1}{2}  >   -\frac{u-1}{2}  + \frac{v-1}{2} \quad (\text{since } 1 + \beta -\beta_j >0)
\end{align}
As $y \ge b+1$ and $y = \frac{u_j-1}{2} + \beta_j - \frac{v_j-1}{2}+t$ for a non-negative integer $t$ with $t- \frac{v_j-1}{2} \le \frac{v_j-1}{2}$, we have
\begin{align*}
 &\frac{u_j-1}{2} + \beta_j - \frac{v_j-1}{2} + t \ge   \frac{u-1}{2} + \beta + \frac{v-1}{2} +  1 \\
 \implies & \frac{u_j-1}{2}  - \frac{v_j-1}{2} + t >   \frac{u-1}{2} - \frac{v-1}{2}   \quad (\text{since } 1 + \beta -\beta_j >0 \text{ and } z \ge 1)\\
 \implies &  -\frac{u_j-1}{2}  + \frac{v_j-1}{2} - t <   -\frac{u-1}{2} + \frac{v-1}{2}
\end{align*}
Since $- \frac{v_j-1}{2}+t \le \frac{v_j-1}{2}$, we have
\[-\frac{u_j-1}{2}  - \frac{v_j-1}{2} \le -\frac{u_j-1}{2}  + \frac{v_j-1}{2} - t < -\frac{u-1}{2} + \frac{v-1}{2},\]
this contradicts \eqref{eq:LA1}. Hence our assumption 
$s_\rho \left( \Delta_j^\mathrm{bot} \right) > a$ is false and we must have $s_\rho \left( \Delta_j^\mathrm{bot} \right) \le a$.
\end{proof}

\begin{proposition}\label{prop:der_unitary_mult}
  Let $\mathfrak{m}=\mathfrak{m}_\rho(u_1,v_1)\nu^{\beta_1}+ ...+ \mathfrak{m}_\rho(u_r,v_r)\nu^{\beta_r} +  \mathfrak{m}^-_\rho (u_{r+1},v_{r+1})\nu^{\beta_{r+1}}+ ...+ \mathfrak{m}^-_\rho (u_s,v_s)\nu^{\beta_s}$ for some unitary Speh multisegments $\mathfrak{m}_\rho(u_i,v_i)$ and real numbers $-\frac{1}{2}<\beta_i <\frac{1}{2}$. Suppose $\Delta =\Delta_1^{\mathrm{top}}$ be the top segment of the ladder multisegment $\mathfrak{m}_\rho(u_1,v_1)\nu^{\beta_1}$. Then, $\mathcal{D}^\mathrm{R}_\Delta \left(\mathfrak{m}\right)= \mathfrak{m}-\Delta$ i.e.,
  \[\mathcal{D}^\mathrm{R}_\Delta \left(\mathfrak{m}\right)=\mathfrak{m}^-_\rho(u_1,v_1)\nu^{\beta_1}+  \mathfrak{m}_\rho(u_2,v_2)\nu^{\beta_2}+ ...+ \mathfrak{m}_\rho(u_r,v_r)\nu^{\beta_r} +  \mathfrak{m}^-_\rho(u_{r+1},v_{r+1})\nu^{\beta_{r+1}}+ ...+ \mathfrak{m}^-_\rho(u_s,v_s)\nu^{\beta_s}.\] 
\end{proposition}

\begin{proof}
Assume all the notations of Algorithm \ref{alg:der:Lang}. Here $[a,b]_\rho=\Delta$. Clearly, $\mathfrak{m}_{[a,b]} \cap \mathfrak{m} (u_1,v_1)\nu^{\beta_1} = \{\Delta \}$. Suppose, $i_*$ is the highest positive integer $i \le k$ such that $[a,b]_\rho$ lies in the upward sequence $\mathcal{U}(\mathfrak{m}_{i})$. By Lemma \ref{lem:req} and the definition of upward sequences, we have the following properties:
\begin{itemize}
    \item[(P1)] If some segment $\Delta_j \in \mathcal{U}(\mathfrak{m}_{i})$ for $1 \le i \le k$ and $\overset{\rightarrow}{\Delta_j} \in \mathfrak{m}_{1} - \sum\limits_{t=1}^{i-1}\mathcal{U}(\mathfrak{m}_{t})$, then  $\overset{\rightarrow}{\Delta_j} \in \mathcal{U}(\mathfrak{m}_{i})$.
    \item[(P2)] If some segment $[x,y]_\rho \in \mathfrak{m}_{1}$ where $\ell_{rel}([x,y]_\rho) \ge \ell_{rel}([a,b]_\rho)$ and $[x,y]_\rho \in \mathfrak{m}_\rho(u_j,v_j)\nu^{\beta_j}$ (resp. $[x,y]_\rho \in \mathfrak{m}^-_\rho(u_j,v_j)\nu^{\beta_j}$) for some $j \ne 1$, then  $[x,y]_\rho \in \mathcal{U}(\mathfrak{m}_{i})$ with $i < i_*$.
    \item[(P3)] For $i \ge i_*$, $ e_\rho\left(\Delta_{i,1} \right)=b  \text{ and if $\Delta_{i,2}$ exists, } e_\rho\left(\Delta_{i,2} \right)=b+1$. This follows from (P1).
\end{itemize}
We describe the structure of upward sequences for $\mathcal{U}(\mathfrak{m}_{i})$ for $i \ge i_*$. We set $i_*=i_1$ and $a=a_1$. We consider two cases.

Case 1. $r_{i_1}=1$. Then, $\mathcal{U}(\mathfrak{m}_{i_1})=[a_1, b]_\rho$. By (P3), for all $i > i_1$, $\mathcal{U}(\mathfrak{m}_{i})=\Delta_{i,1}=[a'_i, b]_\rho$ with $ a'_i > a_1$. Then, by Lemma \ref{lem:rmk}, we will have
\[\mathcal{D}^\mathrm{R}_{[a_1,b]_\rho}(\mathfrak{m})=\mathfrak{m}-[a_1,b]_\rho=\mathfrak{m}-\Delta.\]

Case 2. $r_{i_1} > 1$. By (P2) and (P3), we have $\mathcal{U}(\mathfrak{m}_{i_1})=[a_1, b]_\rho + [a_2+1, b+1]_\rho + ...\text{ for some }a_2>a_1$. By Lemma \ref{lem:req} and (P1), $\overset{\leftarrow}{\Delta}_{i_1,2}=[a_2,b] \in \mathfrak{m}_{i_1 +1}$. 

Set $i_2$ for the largest index such that $[a_2,b]_\rho \in \mathcal{U}(\mathfrak{m}_{i_2})$. Similar to (P2), if $[x,y]_\rho \in \mathfrak{m}_{i_1 +1}$ where $\ell_{rel}([x,y]_\rho) \ge \ell_{rel}([a_2,b]_\rho)$ and $[x,y]_\rho \in \mathfrak{m} (u_j,v_j)\nu^{\beta_j}$ (resp. $[x,y]_\rho \in \mathfrak{m}^-(u_j,v_j)\nu^{\beta_j}$) with $[x,y]_\rho \ne [a_2,b]$, then  $[x,y]_\rho \in \mathcal{U}(\mathfrak{m}_{i})$ with $i < i_2$. Now by (P1) and (P3), we have  
\begin{itemize}
    \item for $i_1 < i <i_2$, $\mathcal{U}(\mathfrak{m}_{i})=[x_i,b]_\rho + [x_i',b+1]_\rho+...$ with $ a_1<x_i \le a_2<x_i'$,
    \item for $i=i_2$, $\mathcal{U}(\mathfrak{m}_{i_2})=[a_2,b]_\rho+....$
\end{itemize}
If $r_{i_2}=1$, we stop; otherwise continue inductively, producing indices 
\[i_1<i_2<...<i_d \le k \text{ with } a=a_1<a_2<...<a_d,\] such that:
\begin{itemize}
    \item For $i=i_t$,  $\mathcal{U}(\mathfrak{m}_{i_t})=[a_t, b]_\rho + [a_{t+1}+1, b+1]_\rho + ...$;
    \item For $i_t < i <i_{t+1}$, $\mathcal{U}(\mathfrak{m}_{i})=[x_i,b]_\rho + [x_i',b+1]_\rho+...$ with $ a_t<x_i \le a_{t+1}<x_i'$;
    \item For $i=i_d$, $r_{i_d}=1$ and $\mathcal{U}(\mathfrak{m}_{i_d})=[a_d,b]_\rho$;
    \item For $i>i_d$, $\mathcal{U}(\mathfrak{m}_{i})=\Delta_{i,1}=[a', b]_\rho$ with $a' \ge a_d$.
\end{itemize}
As $[a_t, a_{t+1}-1]_\rho \subset \mathfrak{rf}\left( \Delta_{i_t,1} \right)$ for $1 \le t < d$ and $\Delta_{i,1}= \mathfrak{rf}\left( \Delta_{i,1} \right)$ for $i \ge i_d$, we conclude that
\begin{align*}
\mathcal{D}^\mathrm{R}_{[a,b]_\rho}(\mathfrak{m}) \ne \infty   \text{ and } \mathcal{D}^\mathrm{R}_{[a,b]_\rho}(\mathfrak{m})=\mathfrak{m}-  \sum\limits_{i=i_1}^k \mathcal{U}(\mathfrak{m}_{i}) + \mathcal{D}^\mathrm{R}_{[a,b]} \left( \sum\limits_{i=i_1}^k \mathcal{U}(\mathfrak{m}_{i}) \right),
\end{align*}
where
\begin{align*}
\mathcal{D}^\mathrm{R}_{[a,b]_\rho} \left( \sum\limits_{i=i_1}^k \mathcal{U}(\mathfrak{m}_{i}) \right)&=  \sum\limits_{t=1}^{d-1} \mathcal{D}^\mathrm{R}_{[a_t,a_{t+1}-1]_\rho} \left( \sum\limits_{i=i_t}^{i_{t+1}-1} \mathcal{U}(\mathfrak{m}_{i}) \right) + \mathcal{D}^\mathrm{R}_{[a_d,b]_\rho} \left( \sum\limits_{i=i_d}^{k} \mathcal{U}(\mathfrak{m}_{i}) \right)\\
&= \sum\limits_{t=1}^{d-1}  \left( \sum\limits_{i=i_t}^{i_{t+1}-1} \mathcal{U}(\mathfrak{m}_{i}) -[a_t,b]_\rho+[a_{t+1},b]_\rho  \right) + \left( \sum\limits_{i=i_d}^{k} \mathcal{U}(\mathfrak{m}_{i})-[a_d, b]_\rho \right)\\
&= \sum\limits_{i=i_1}^k \mathcal{U}(\mathfrak{m}_{i}) - [a_1,b]_\rho = \sum\limits_{i=i_1}^k \mathcal{U}(\mathfrak{m}_{i}) - \Delta
\end{align*}
Here, the above first equality follows from $s\left(\Delta_{i,1}\right) > a_t $ for all $i > i_t$ and the second equality by Lemma \ref{lem:rmk}. Therefore, we have
\[\mathcal{D}^\mathrm{R}_{[a,b]_\rho}(\mathfrak{m})=\mathfrak{m}-\Delta.\]
\end{proof}

\begin{proposition}\label{prop:der_unitary_mult_left}
  Let $\mathfrak{m}=\mathfrak{m}_\rho(u_1,v_1)\nu^{\alpha_1}+ ...+ \mathfrak{m}_\rho(u_r,v_r)\nu^{\alpha_r} +  {^-}\mathfrak{m}_\rho(u_{r+1},v_{r+1})\nu^{\alpha_{r+1}}+ ...+ {^-}\mathfrak{m}_\rho(u_s,v_s)\nu^{\alpha_s}$ for some unitary Speh multisegments $\mathfrak{m}_\rho(u_i,v_i)$ and real numbers $-\frac{1}{2}<\alpha_i <\frac{1}{2}$. Suppose $\Delta =\Delta_j^{\mathrm{bot}}$ be the bottom segment of the ladder multisegment $\mathfrak{m}_\rho(u_j,v_j)\nu^{\alpha_j}$ for some $1 \le j \le r$. Then, \[\mathcal{D}^\mathrm{L}_\Delta \left(\mathfrak{m}\right)= \mathfrak{m}-\Delta.\]
\end{proposition}
\begin{proof}
 This is the left version of Proposition \ref{prop:der_unitary_mult}.
\end{proof}

Fix $\rho \in \mathrm{Irr^{unit}}$, an irreducible cuspidal representation. We want to define an involution on $\mathrm{Mult}_\rho$ as follows:
For a segment $\Delta=\nu^\alpha[a,b]_\rho=[a+\alpha,b+\alpha]_\rho \in \mathrm{Seg}_\rho$, we first define $\Delta^\dagger=\nu^{-\alpha}[-b, -a]_\rho=[-b-\alpha, -a-\alpha]_\rho.$ Then, we extend this to an involution on $\mathrm{Mult}_\rho$ by 
\[ \mathfrak{m} \mapsto \mathfrak{m}^\dagger,  \text{ where }   \mathfrak{m}^\dagger := \sum\limits_{\Delta \in \mathfrak{m}} \Delta^\dagger  \text{ for all } \mathfrak{m} \in \mathrm{Mult}_\rho.\]
For each $\mathfrak{m} \in \mathrm{Mult}_\rho$, we have a unique decomposition $\mathfrak{m}=\mathfrak{m}_\mathrm{sym} + \mathfrak{m}_\mathrm{ant}$, where $\mathfrak{m}_\mathrm{sym} \in \mathrm{Mult}_\rho$  such that $\mathfrak{m}_\mathrm{sym}^\dagger=\mathfrak{m}_\mathrm{sym}$ and $\mathfrak{m}_\mathrm{sym}$ is the maximal multisegment with that property. Therefore, for a segment $\Delta \in \mathfrak{m}_\mathrm{ant}$, we have $\Delta^\dagger \notin \mathfrak{m}_\mathrm{ant}$.

\begin{proposition}\label{prop:match_prod_speh_der}
Let $\rho \in \mathrm{Irr^{unit}}$ be an irreducible cuspidal representation.
Suppose $\mathfrak{m}=\mathfrak{m}_\rho(u_1,v_1,R^{w_1}) \nu^{\alpha_1}+...+\mathfrak{m}_\rho(u_{2r},v_{2r},R^{w_{2r}})\nu^{\alpha_{2r}}+\mathfrak{m}_\rho(u_{2r+1},v_{2r+1},R^{w_{2r+1}})\nu^{\alpha_{2r+1}} + ...+\mathfrak{m}_\rho(u_k,v_k,R^{w_k}) \nu^{\alpha_{k}}$ for integer $r \ge 0$,  and $\mathfrak{n}=\mathfrak{m}_\rho(u_1',v_1',L_{w_1'}) \nu^{\beta_1}+...+\mathfrak{m}_\rho(u_{2s}',v_{2s}',L_{w_{2s}'}) \nu^{\beta_{2s}}   + \mathfrak{m}_\rho(u_{2s+1}',v_{2s+1}',L_{w_{2s+1}'}) \nu^{\beta_{2s+1}} +...+\mathfrak{m}_\rho(u_l',v_l',L_{w_l'}) \nu^{\beta_l}$ for some unitary Speh multisegments $\mathfrak{m}_\rho(u_i,v_i), \mathfrak{m}_\rho(u_j',v_j')$, some integers $0 \le w_i \le u_i, 0 \le w_j' \le u_j'$, and some real numbers $-\frac{1}{2}<\alpha_i, \beta_j <\frac{1}{2}$, where
$u_{2i}=u_{2i-1}, v_{2i}=v_{2i-1}, \alpha_{2i}=-\alpha_{2i-1}$ for $i \le r$, $u_{2j}'=u_{2j-1}', v_{2j}'=v_{2j-1}', \beta_{2j}=-\beta_{2j-1}$ for $j \le s$, and $\alpha_i=0$ for $i > 2r$, $\beta_j=0$ for $j > 2s$. Then,
\[\nu^\frac{1}{2}\mathfrak{m}=\mathfrak{n} \implies w_i \in \{0, u_i\} \text{ and }w_j' \in \{0, u_j'\} \text{ for all } 1 \le i \le k, 1 \le j \le l.\]
\end{proposition}

\begin{proof}
To prove $w_i \in \{0, u_i\}$ for $1 \le i \le k$, fix $\mathfrak{n}'=\nu^{-\frac{1}{2}}\mathfrak{n}$. As $\left(u_{2j}',v_{2j}', \beta_{2j} \right) = \left(u_{2j-1}', v_{2j-1}', -\beta_{2j-1}\right)$, we have $\left(\mathfrak{m}_\rho(u_{2j}',v_{2j}'-1)\nu^{\beta_{2j}}\right)^\dagger = \mathfrak{m}_\rho(u_{2j-1}',v_{2j-1}'-1)\nu^{\beta_{2j-1}}$ and $\left(\mathfrak{m}_\rho(u_{2j-1}',v_{2j-1}'-1)\nu^{\beta_{2j-1}}\right)^\dagger = \mathfrak{m}_\rho(u_{2j}',v_{2j}'-1)\nu^{\beta_{2j}}$ for $j \le s$. Also, as $\beta_j=0$, we have $\left(\mathfrak{m}_\rho(u_j',v_j'-1)\nu^{\beta_j}\right)^\dagger=\mathfrak{m}_\rho(u_j',v_j'-1)\nu^{\beta_j}$ for $j > 2s$. Therefore, $\mathfrak{n}'=\mathfrak{n}_\mathrm{sym}'+\mathfrak{n}_\mathrm{ant}' $, where 
\[\mathfrak{n}_\mathrm{sym}' = \mathfrak{m}_\rho(u_1',v_1'-1)\nu^{\beta_1}+...+\mathfrak{m}_\rho(u_l',v_l'-1)\nu^{\beta_l},\] and 
\[\mathfrak{n}_\mathrm{ant}' = \sum\limits_{j=1}^l \left[ -\frac{v_j'-1}{2}-\frac{u_j'-1}{2},~ -\frac{v_j'-1}{2}+\frac{u_j'-1}{2}-w_j'\right]_\rho \nu^{-\frac{1}{2}+\beta_j}.\]Therefore, if $\Delta \in \mathfrak{n}_\mathrm{ant}'$, we have
\begin{equation}\label{eq:SD1}
    s_\rho(\Delta) + e_\rho(\Delta)=-(v_j'-1)-w_j'-1+2\beta_j <0, \text{ since }  v_j'\ge 1 \text{ and } \beta_j <\frac{1}{2}.
\end{equation}

If $i=2d$ (resp. $i=2d-1$) for some $ d \le r$, then we denote $i^c=2d-1$ (resp. $i^c=2d$). If $i > 2r$, we write $i^c=i$. We now decompose $\mathfrak{m}$ as $\mathfrak{m}=\mathfrak{p} + \sum\limits_{i=1}^k \Delta^\mathrm{top}_i + \sum\limits_{i=1}^k \Delta^\mathrm{bot}_i$, where
\[\mathfrak{p}=\mathfrak{m}_\rho(u_1,v_1-2) \nu^{\alpha_1}+...+ \mathfrak{m}_\rho(u_k,v_k-2) \nu^{\alpha_k},\] and
\[ \Delta^\mathrm{top}_i = 
  \left[\frac{v_i-1}{2}-\frac{u_i-1}{2}+w_i, \frac{v_i-1}{2} +\frac{u_i-1}{2}\right]_\rho\nu^{\alpha_i} ,\]
together with
\[\Delta^\mathrm{bot}_i = \begin{cases}
  \left[-\frac{v_i-1}{2}-\frac{u_i-1}{2}, -\frac{v_i-1}{2} +\frac{u_i-1}{2}\right]_\rho\nu^{\alpha_i} &\mbox{ if } v_i >1\\ 
  \emptyset &\mbox{ if } v_i =1.
 \end{cases} \]
Also, we have $\left(\mathfrak{m}_\rho(u_{i},v_{i}-2)\nu^{\alpha_{i}}\right)^\dagger = \mathfrak{m}_\rho(u_{i^c},v_{i^c}-2)\nu^{\alpha_{i^c}}$, and $\left(\mathfrak{m}_\rho(u_{i^c},v_{i^c}-2)\nu^{\alpha_{i^c}}\right)^\dagger=\mathfrak{m}_\rho(u_i,v_i-2)\nu^{\alpha_i}$ for $1 \le i \le k$. Therefore, we get
\begin{equation*}
 \mathfrak{p}^\dagger=\mathfrak{p},   \text{ and hence, } \mathfrak{p} \subset \mathfrak{n}'_\mathrm{sym}.   
\end{equation*}
For each $1 \le i \le k$,
\begin{align*}\label{eq:SD3}
    s_\rho\left( \Delta^\mathrm{top}_i \right) + e_\rho\left( \Delta^\mathrm{top}_i \right)=(v_i-1)+ w_i + 2 \alpha_i \quad \begin{cases}
        > 0 &\mbox{ if } \alpha_i > 0, \text{ or } w_i >0, \text{ or } v_i >1\\
        = 0 &\mbox{ if } \alpha_i = 0 \text{ and } w_i =0 \text{ and } v_i = 1,\\
        < 0 &\mbox{ if } \alpha_i < 0 \text{ and } w_i =0 \text{ and } v_i = 1,
    \end{cases}
\end{align*}
and
\begin{equation*}
 s_\rho\left( \Delta^\mathrm{bot}_i \right) + e_\rho\left( \Delta^\mathrm{bot}_i \right)=-(v_i-1)+2\alpha_i < 0  \text{ whenever } \Delta^\mathrm{bot}_i \ne \emptyset.
\end{equation*}
Therefore, we have $w_i=0$ and $v_i=1$, whenever $s_\rho\left( \Delta^\mathrm{top}_i \right) + e_\rho\left( \Delta^\mathrm{top}_i \right) \le 0$ for some $1 \le i \le k$. If $s_\rho\left( \Delta^\mathrm{top}_i \right) + e_\rho\left( \Delta^\mathrm{top}_i \right)=0$, we have
\[\Delta^\mathrm{top}_i=\left[-\frac{u_i-1}{2}, \frac{u_i-1}{2} \right]_\rho \text{ and }\left(\Delta^\mathrm{top}_i\right)^\dagger=\Delta^\mathrm{top}_i=\mathfrak{m}_\rho(u_i,v_i)(\alpha_i) \subset \mathfrak{n}_\mathrm{sym}'.\]
If $s_\rho\left( \Delta^\mathrm{top}_i \right) + e_\rho\left( \Delta^\mathrm{top}_i \right)<0$, we have $\Delta^\mathrm{top}_i=\left[-\frac{u_i-1}{2}, \frac{u_i-1}{2} \right]_\rho\nu^{\alpha_i}$ with $\alpha_i <0$ and we denote $\Delta^\mathrm{top}_{i,w_i=0}=\Delta^\mathrm{top}_i$. Let $I \subsetneq \{1,...,k\}$ such that for $i \in I$, we have $s_\rho\left( \Delta^\mathrm{top}_i \right) + e_\rho\left( \Delta^\mathrm{top}_i \right)<0$ and $\Delta^\mathrm{top}_{i, w_i=0} \in \mathfrak{n}_\mathrm{ant}'$. If $i \in \{1,...,k\} \setminus I$ and $s_\rho\left( \Delta^\mathrm{top}_i \right) + e_\rho\left( \Delta^\mathrm{top}_i \right)<0$, we have
\[\Delta^\mathrm{top}_i, \left(\Delta^\mathrm{top}_i\right)^\dagger=\left[-\frac{u_i-1}{2}, \frac{u_i-1}{2} \right]_\rho\nu^{-\alpha_i} \in \mathfrak{n}_\mathrm{sym}' ,\]
together with
\[\mathfrak{m}_\rho(u_i,v_i)(\alpha_i)=\mathfrak{m}_\rho(u_i,1)(\alpha_i)=\Delta^\mathrm{top}_i + \left(\Delta^\mathrm{top}_i\right)^\dagger \subset \mathfrak{n}_\mathrm{sym}'.\]

We now consider only those $\Delta^\mathrm{top}_i$ where $\Delta^\mathrm{top}_i \ne \emptyset$ (i.e. $w_i \ne u_i$) and $s_\rho\left( \Delta^\mathrm{top}_i \right) + e_\rho\left( \Delta^\mathrm{top}_i \right) > 0$. As $\mathfrak{m}=\mathfrak{n}'$, applying \eqref{eq:SD1}, we have 
 \[\Delta^\mathrm{top}_i \in  \mathfrak{n}_\mathrm{sym}', \text{ and so, } \left(\Delta^\mathrm{top}_i\right)^\dagger \in  \mathfrak{n}_\mathrm{sym}'. \]
If $\left(\Delta^\mathrm{top}_i\right)^\dagger= \Delta^\mathrm{top}_i$, we have $w_i=0=\alpha_i$, $v_i=1$ and $\mathfrak{m}_\rho(u_i,1)^\dagger=\mathfrak{m}_\rho(u_i,1) \subset \mathfrak{n}_\mathrm{sym}'$. Now, suppose $\left(\Delta^\mathrm{top}_i\right)^\dagger \ne  \Delta^\mathrm{top}_i$. Observe that
\begin{equation*}
 s_\rho\left( \left(\Delta^\mathrm{top}_i\right)^\dagger \right) + e_\rho\left( \left(\Delta^\mathrm{top}_i\right)^\dagger \right)=-(v_i-1)-w_i-2\alpha_i < 0.
\end{equation*}
As $\left(\Delta^\mathrm{top}_i\right)^\dagger \in  \mathfrak{n}_\mathrm{sym}'$ and $ \mathfrak{p}^\dagger=\mathfrak{p}$, we have either \[\left(\Delta^\mathrm{top}_i\right)^\dagger=\Delta^\mathrm{top}_j \text{ for some } j \text{ such that } \alpha_j <0 \text{ and }(w_j,v_j)=(0,1),\]
or, we have
\[\left(\Delta^\mathrm{top}_i\right)^\dagger=\Delta^\mathrm{bot}_j \text{ for some } j \text{ such that } \Delta^\mathrm{bot}_j \ne \emptyset.\]

Case 1. $\left(\Delta^\mathrm{top}_i\right)^\dagger=\Delta^\mathrm{top}_j$ with $\alpha_j <0 \text{ and }(w_j,v_j)=(0,1)$. As $\Delta^\mathrm{top}_j=\mathfrak{m}_\rho(u_j,1)\nu^{\alpha_j}$, we have $\Delta^\mathrm{top}_i=\left(\Delta^\mathrm{top}_j\right)^\dagger=\mathfrak{m}_\rho(u_j,1)\nu^{-\alpha_j}$ and hence,
\[ \mathfrak{m}_\rho(u_j,v_j)(\alpha_j)=\mathfrak{m}_\rho(u_j,1)\nu^{\alpha_j} + \mathfrak{m}_\rho(u_{j},1)\nu^{-\alpha_j}=\Delta^\mathrm{top}_j + \Delta^\mathrm{top}_i \subset \mathfrak{n}_\mathrm{sym}'.\]

Case 2. $\left(\Delta^\mathrm{top}_i\right)^\dagger=\Delta^\mathrm{bot}_j$ with $v_j >1$. Then, we have
\[\mathfrak{m}_\rho(u_{j^c},v_{j^c})\nu^{\alpha_{j^c}} =\mathfrak{m}_\rho(u_j,v_j)\nu^{-\alpha_j}=\Delta^\mathrm{top}_i+\mathfrak{m}_\rho(u_j,v_j-2)\nu^{-\alpha_j} + \Delta^\mathrm{bot}_{j^c} \subset \mathfrak{n}_\mathrm{sym}'.\]
Therefore, there exist disjoint subsets $I', J \subset \{1,...,k\}\setminus I$ such that $\mathfrak{m}=\mathfrak{n}_\mathrm{sym}' + \mathfrak{n}_\mathrm{ant}'$, where
\[\mathfrak{n}_\mathrm{sym}'= \sum\limits_{i \in I'} \mathfrak{m}_\rho(u_i,v_i-2)\nu^{\alpha_i}    + \sum\limits_{j \in J} \mathfrak{m}_\rho(u_j,v_j)\nu^{\alpha_j}      \text{ and }    \mathfrak{n}_\mathrm{ant}'=\sum\limits_{i \in I'}\Delta^\mathrm{bot}_{i} + \sum\limits_{i \in I} \Delta^\mathrm{top}_{i, w_i=0}.\]
In particular, whenever $\Delta^\mathrm{top}_i \ne \emptyset$ (i.e. $w_i \ne u_i$), they are actually the top segment of the ladder part $\mathfrak{m}_\rho(u_i,v_i)\nu^{\alpha_i}$. Hence, $w_i \in \{0, u_i\}$ for all $i$. Similarly, one can prove that $w_j' \in \{0, u_j'\}$ for $1 \le j \le l$.
\end{proof}

\begin{corollary}\label{cor:match_hd}
Let $\pi=L \left(\sum\limits_{i=1}^r \mathfrak{m}_\rho(u_i,v_i)\nu^{\alpha_i} \right)$ and $\pi'=L \left(\sum\limits_{i=1}^s \mathfrak{m}_\rho(u_j',v_j')\nu^{\beta_j} \right)$ be two unitary representations for unitary cuspidal $\rho$, $-\frac{1}{2} < \alpha_i, \beta_j <\frac{1}{2}$ and positive integers $u_i,v_i,u_j',v_j'$. Suppose, $\mathrm{D}^\mathrm{R}_\mathfrak{p}(\nu^\frac{1}{2}\pi) \cong \mathrm{D}^\mathrm{L}_\mathfrak{q}(\pi')$ for some $\mathfrak{p,q} \in \mathrm{Mult}_\rho$. Then, there exists $\widetilde{\mathfrak{p}}$ (resp. $\widetilde{\mathfrak{q}}$) in $\mathrm{Mult}_\rho$, which consists only of the top (resp. bottom) segment of the ladder multisegment $\mathfrak{m}_\rho(u_i,v_i)\nu^{\frac{1}{2}+\alpha_i}$ (resp. $\mathfrak{m}_\rho(u_j',v_j')\nu^{\beta_j}$) for some $1\le i \le r$ and some $1\le j \le s$ such that
\[\mathrm{D}^\mathrm{R}_{\widetilde{\mathfrak{p}}} (\nu^\frac{1}{2}\pi) \cong \mathrm{D}^\mathrm{L}_{\widetilde{\mathfrak{q}}}(\pi')\]
\end{corollary}
\begin{proof}
    This follows immediately from Proposition \ref{prop:mult_der} and Proposition \ref{prop:match_prod_speh_der}.
\end{proof}

\begin{lemma}\label{lem:inclution}
Let $\rho$ be unitary supercuspidal, $ 0\le \alpha <\frac{1}{2}, -\frac{1}{2}<\beta_i <\frac{1}{2}$, and $u,v,u_i,v_i$ be positive integers such that $u+v+\alpha \ge u_i+v_i+\beta_i$ and $v \ne 1$. If $\mathfrak{m}_\rho(u,v-1)\nu^{\pm \alpha} \subset \sum\limits_{i=1}^r \mathfrak{m}_\rho (u_i,v_i)\nu^{\beta_i} + \sum\limits_{i=r+1}^s {^-}\mathfrak{m}_\rho (u_i,v_i)\nu^{\beta_i}$, then there exists positive integer $i \le s$ such that $(u_i,v_i)=(u,v)$ and $|\beta_i|=\frac{1}{2}-\alpha$, or there exists positive integer $i_0 \le r$ such that 
\[(u_{i_0},v_{i_0},\beta_{i_0})=(u, v-1, \pm \alpha) \text{ and }   \mathfrak{m}_\rho(u,v-1)\nu^{\pm \alpha}=\mathfrak{m}_\rho (u_{i_0},v_{i_0})\nu^{\beta_{i_0}}.\]
\end{lemma}
\begin{proof}
Let $\Delta^\mathrm{bot}_i$ be the bottom segment of $\mathfrak{m}_\rho (u_i,v_i)\nu^{\beta_i}$ and $\Delta^\mathrm{bot}$ be the bottom segment of $\mathfrak{m}_\rho (u,v-1)\nu^{\alpha}$. There exists a positive integer $i \le s$ such that  
$\Delta^\mathrm{bot} \in \mathfrak{m}_\rho (u_i,v_i)\nu^{\beta_i}.$ Then, $u=u_i$ and for some non-negative integer $t$, we have
$s_\rho \left( \Delta^\mathrm{bot}_i \right) + t = s_\rho \left( \Delta^\mathrm{bot} \right)$. Therefore,
$-\frac{v_i-1}{2} + \beta_i + t= -\frac{v-2}{2} + \alpha$, which implies
\[v_j - (2t-1)= v - ( 2 \alpha -2 \beta_i).\]
Then, $2\alpha-2\beta_i$ is an integer with $\alpha=\beta_i$ if $\beta_i \ge 0$ and $\alpha - \beta_i=\frac{1}{2}$ if  $\beta_i < 0$. Since $v+\alpha \ge v_i + \beta_i$, we conclude 
\[(u_i, v_i, \beta_i)=\begin{cases}
    (u,v-1, \alpha) &\mbox{ if } \beta_i \ge 0\\
    (u,v, \alpha-\frac{1}{2}) &\mbox{ if } \beta_i < 0
\end{cases}\]
Similarly, if $\Delta^\mathrm{bot}$ be the bottom segment of $\mathfrak{m}_\rho (u,v-1)\nu^{-\alpha}$ such that 
$\Delta^\mathrm{bot} \in \mathfrak{m}_\rho (u_i,v_i)\nu^{\beta_i},$  and we have
\[(u_i, v_i, \beta_i)=\begin{cases}
    (u,v-1, -\alpha) &\mbox{ if } \beta_i \le 0\\
    (u,v, \frac{1}{2}-\alpha) &\mbox{ if } \beta_i > 0.
\end{cases}\]
\end{proof}

\begin{lemma}\label{lem:inclution_2}
Let $\mathfrak{m}=\sum\limits_{i=1}^r \mathfrak{m}_\rho (u_i,v_i)\nu^{\beta_i}$ and $\Delta^\mathrm{top}$ be the top segment of the ladder multisegment $\mathfrak{m}_\rho(u,v)\nu^{\frac{1}{2}-\alpha}$, where $\rho$ is a unitary supercuspidal, $ 0< \alpha <\frac{1}{2}, -\frac{1}{2}<\beta_i <\frac{1}{2}$, and $u,v,u_i,v_i$ be positive integers such that $u+v+\alpha \ge u_i+v_i+\beta_i$. Let $\mathfrak{p}$ be consists of the top segment of the ladder multisegment $\mathfrak{m}_\rho(u_i,v_i)\nu^{\beta_i}$ for some $i$. If $\Delta^\mathrm{top} \in \mathcal{D}^\mathrm{R}_\mathfrak{p}(\mathfrak{m})$, there exists positive integer $i_0 \le r$ such that 
\[(u_{i_0},v_{i_0},\beta_{i_0})=(u, v, \frac{1}{2}-\alpha) \text{ and }   \mathcal{D}^\mathrm{R}_\mathfrak{p}(\mathfrak{m})=\mathfrak{m}_\rho (u_{i_0},v_{i_0})\nu^{\beta_{i_0}} + \mathcal{D}^\mathrm{R}_\mathfrak{p}\left(\mathfrak{m}- \mathfrak{m}_\rho (u_{i_0},v_{i_0})\nu^{\beta_{i_0}}\right).\]
An analogous result holds for the left derivative $\mathcal{D}^\mathrm{L}_\mathfrak{p}(\mathfrak{m})$ with $\mathfrak{p}$ (resp. $\Delta^\mathrm{top}$) replaced by bottom segments.
\end{lemma}
\begin{proof}
There exists a positive integer $i \le r$ such that $\Delta^\mathrm{top} \in \mathfrak{m}_\rho (u_i,v_i)\nu^{\beta_i}$ and let $\Delta^\mathrm{top}_i$ be the top segment of $\mathfrak{m}_\rho (u_i,v_i)\nu^{\beta_i}$ and $\Delta^\mathrm{bot}$ be the bottom segment of the ladder $\mathfrak{m}_\rho (u,v-1)\nu^{\alpha}$. Then, $u=u_i$ and for some non-negative integer $t$, we have
$e_\rho \left( \Delta^\mathrm{top}_i \right) - t = e_\rho \left( \Delta^\mathrm{top} \right)$. With a similar argument as for Lemma \ref{lem:inclution}, we conclude that $t=0$ and $(u_{i},v_{i},\beta_{i})=(u, v, \frac{1}{2}-\alpha)$. Hence, the result follows from Proposition \ref{prop:der_unitary_mult}.
\end{proof}

\section{Notion of RdLd-matching}\label{sec:RdLi}
In this section, we introduce the notion of RdLd-matching, which isolates the first condition of Definition \ref{def:relevant} (the derivative matching condition) and provides a framework for analyzing it independently of the strong commutativity condition. This separation is crucial for the proof of Theorem \ref{thm:unitary}, as it allows us to study the matching condition using induction on the complexity of unitary representations. We also establish reduction results for unitary representations, which form the core of the proof that generalized GGP relevance implies the extended Gan-Gross-Prasad relevance criterion (Definition \ref{def:extended_relevance}).

\begin{definition}
    Let $\pi, \pi' \in \mathrm{Irr}$. We say that the pair $(\pi,\pi')$ is \textit{RdLd-matching} if there exist multisegments $\mathfrak{p}$ and $\mathfrak{q}$ such that \[\mathrm{D}^\mathrm{R}_\mathfrak{p}(\nu^\frac{1}{2}\pi) \ne 0 \text{ and }
    \mathrm{D}^\mathrm{R}_\mathfrak{p}(\nu^\frac{1}{2}\pi) \cong \mathrm{D}^\mathrm{L}_\mathfrak{q}(\pi').\]
\end{definition}

\begin{lemma}[Duality]\label{lem:duality}
 Let $\pi, \pi' \in \mathrm{Irr}$. The pair $(\pi,\pi')$ is RdLd-matching if and only if $(\pi',\pi)$ is RdLd-matching.
\end{lemma}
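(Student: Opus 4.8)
The plan is to derive the symmetry from the Gelfand--Kazhdan involution $\theta$ of Section~\ref{sec:gk_invol}. Recall that $\theta$ is a covariant auto-equivalence with $\theta(L(\mathfrak m))\cong L(\Theta(\mathfrak m))$, that it interchanges left and right derivatives, $\theta\circ\mathrm D^{\mathrm R}_{\mathfrak a}=\mathrm D^{\mathrm L}_{\Theta(\mathfrak a)}\circ\theta$ and $\theta\circ\mathrm D^{\mathrm L}_{\mathfrak a}=\mathrm D^{\mathrm R}_{\Theta(\mathfrak a)}\circ\theta$ (this is the combinatorial identity $\mathcal D^{\mathrm L}=\Theta\circ\mathcal D^{\mathrm R}\circ\Theta$ of Section~\ref{subsec:der:lang} lifted through Theorem~\ref{thm:der:Lang}), and that $\theta(\nu^{x}\tau)\cong\nu^{-x}\theta(\tau)$. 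I will also use that both $\theta$ and twisting by $\nu^{x}$ are compatible with the ordering $\le_Z$, so they carry Rd-minimal multisegments to Rd-minimal ones, while $\theta$ in addition swaps Rd-minimality with Ld-minimality: $\mathfrak a$ is Rd-minimal to $\tau$ iff $\Theta(\mathfrak a)$ is Ld-minimal to $\theta(\tau)$, and iff $\nu^{x}\mathfrak a$ is Rd-minimal to $\nu^{x}\tau$.

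With these in hand the first step is a direct computation. If $(\pi,\pi')$ is RdLd-matching via an Rd-minimal $\mathfrak p$ for $\nu^{1/2}\pi$ and an Ld-minimal $\mathfrak q$ for $\pi'$ with $\sigma:=\mathrm D^{\mathrm R}_{\mathfrak p}(\nu^{1/2}\pi)\cong\mathrm D^{\mathrm L}_{\mathfrak q}(\pi')\neq 0$, then applying $\theta$ and then twisting by $\nu^{1/2}$ yields
\[
\mathrm D^{\mathrm R}_{\nu^{1/2}\Theta(\mathfrak q)}\bigl(\nu^{1/2}\theta(\pi')\bigr)\;\cong\;\nu^{1/2}\theta(\sigma)\;\cong\;\mathrm D^{\mathrm L}_{\nu^{1/2}\Theta(\mathfrak p)}\bigl(\theta(\pi)\bigr)\neq 0,
\]
where $\nu^{1/2}\Theta(\mathfrak q)$ is Rd-minimal for $\nu^{1/2}\theta(\pi')$ and $\nu^{1/2}\Theta(\mathfrak p)$ is Ld-minimal for $\theta(\pi)$ by the compatibilities above. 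Hence $(\theta(\pi'),\theta(\pi))$ is RdLd-matching, and since every step is an equivalence we obtain
\[
(\pi,\pi')\ \text{RdLd-matching}\iff (\theta(\pi'),\theta(\pi))=(\pi'^{\vee},\pi^{\vee})\ \text{RdLd-matching}.
\]

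It then remains to remove the contragredient, i.e. to show that RdLd-matching is unchanged when one dualizes both members: $(\pi,\pi')$ is RdLd-matching iff $(\pi^{\vee},\pi'^{\vee})$ is; combining this with the last display gives the asserted $(\pi,\pi')\Leftrightarrow(\pi',\pi)$. I expect this to be the only real difficulty, because $\theta=(-)^{\vee}$ interchanges $\mathrm D^{\mathrm R}$ and $\mathrm D^{\mathrm L}$ and therefore cannot distinguish $(\pi^{\vee},\pi'^{\vee})$ from $(\pi'^{\vee},\pi^{\vee})$, so some genuinely new input on minimal derivative multisegments is needed. My plan here is to give a $(-)^{\vee}$-invariant reformulation: using the inversion between minimal derivatives and minimal integrals (an Rd-minimal $\mathfrak p$ with $\mathrm D^{\mathrm R}_{\mathfrak p}(\tau)\cong\sigma$ is precisely an Ri-minimal $\mathfrak p$ with $\mathrm I^{\mathrm R}_{\mathfrak p}(\sigma)\cong\tau$, and likewise on the left, cf. Proposition~\ref{prop:int_empty}), RdLd-matching of $(\pi,\pi')$ becomes the existence of a $\sigma$ from which $\nu^{1/2}\pi$, resp. $\pi'$, is obtained by a minimal right, resp. left, integral; one then uses the explicit algorithms of Section~\ref{sec:der_int} (Theorems~\ref{thm:der:Lang} and \ref{thm:integral}) and the removal and intersection--union combinatorics of Section~\ref{prelim} to check that the collection of admissible $\sigma$ is stable under $\Theta$ after the bookkeeping twist, which is exactly the required contragredient-invariance. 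An alternative route is to invoke the reduction procedure of Algorithm~\ref{alg:relavant}, which peels off highest derivatives from $\pi$ and $\pi'$ until a pair of generic representations is reached---a pair which is automatically RdLd-matching by Proposition~\ref{prop:generic} and whose defining class $\mathrm{Mult}^{\mathrm{ul}}$ is $\Theta$-stable---and to observe that the admissibility conditions met along the way are themselves $(-)^{\vee}$-invariant; here one must be careful to avoid circularity, since that algorithm appeals to the present lemma.

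In short, the formal Gelfand--Kazhdan manipulation disposes of everything except the stability of RdLd-matching under simultaneous contragredient, and I anticipate that establishing this stability---which seems to require real control of the minimal derivative multisegments, equivalently of the removal processes, rather than an abstract symmetry---will be the main obstacle.
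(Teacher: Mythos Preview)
Your reduction via the Gelfand--Kazhdan involution is correct and gives the clean equivalence
\[
(\pi,\pi')\ \text{RdLd-matching}\ \Longleftrightarrow\ (\pi'^{\vee},\pi^{\vee})\ \text{RdLd-matching},
\]
but, as you yourself flag, the remaining step---stability of RdLd-matching under simultaneous contragredient $(\pi,\pi')\Leftrightarrow(\pi^{\vee},\pi'^{\vee})$---is not proved. Your two suggested routes do not close the gap: the ``$\Theta$-stable reformulation via integrals'' is only a heuristic (you would need to show that the set of common targets $\sigma$ reachable by a minimal right integral from one side and a minimal left integral from the other is preserved under $\Theta$ together with a $\nu$-shift, and nothing in Sections~\ref{prelim}--\ref{sec:der_int} gives this directly), and the route through Algorithm~\ref{alg:relavant} is, as you note, circular since that algorithm invokes the present lemma. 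So the proposal is a partial argument with an honestly identified, but unfilled, hole.

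The paper's proof goes by a completely different mechanism. It does not use $\theta$ at all; instead it quotes \cite[Theorem~18.1]{Cha_qbl} and, more concretely, the construction underlying Proposition~\ref{prop:interchange}: starting from Rd-minimal $\mathfrak m$ and Ld-minimal $\mathfrak n$ witnessing $(\pi,\pi')$, one manufactures the witnessing multisegments for $(\pi',\pi)$ explicitly as
\[
\mathfrak p=\nu^{1/2}\,\mathfrak r^{\mathrm R}\bigl(\mathfrak m,\ \mathfrak{hd}^{\mathrm R}(\mathrm I^{\mathrm L}_{\mathfrak n}(\nu^{1/2}\pi))\bigr),\qquad
\mathfrak q=\nu^{-1/2}\,\mathfrak r^{\mathrm L}\bigl(\mathfrak n,\ \mathfrak{hd}^{\mathrm L}(\mathrm I^{\mathrm L}_{\mathfrak m}(\pi'))\bigr),
\]
using the highest-derivative multisegments and the removal process (Lemmas~\ref{lem:interchange_1}--\ref{lem:interchange_2}). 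This is exactly the ``genuinely new input on minimal derivative multisegments'' you anticipated needing; the point is that it is supplied by \cite{Cha_qbl} rather than by any symmetry argument. If you want to salvage your approach, the cleanest fix is simply to replace the unproved contragredient-invariance step by a direct appeal to \cite[Theorem~18.1]{Cha_qbl}, at which point the detour through $\theta$ becomes unnecessary.
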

\begin{proof}
The duality follows from the proof of \cite[Theorem 18.1]{Cha_qbl}. Specifically, the argument in Step 4 of that proof—which relies only on Steps 1 and 2 of the same theorem—is independent of the strongly RdLi-commutativity condition used elsewhere in \cite[Theorem 18.1]{Cha_qbl}. Consequently, the duality holds for RdLd-matching pairs without requiring the full strength of the generalized GGP relevance.

In particular, if $(\pi,\pi')$ is RdLd-matching, then there are Rd-minimal $\mathfrak{m}$ and Ld-minimal $ \mathfrak{n}$ such that 
    \[\mathrm{D}^\mathrm{R}_\mathfrak{m}(\nu^\frac{1}{2}\pi) \ne 0 \text{ and }
    \mathrm{D}^\mathrm{R}_\mathfrak{m}(\nu^\frac{1}{2}\pi) \cong \mathrm{D}^\mathrm{L}_\mathfrak{n}(\pi').\]
Applying Proposition \ref{prop:interchange} (which gives explicit formulas for the multisegments under interchange), we obtain that the pair $(\pi',\pi)$ is RdLd-matching with respect to the multisegments   
\[\mathfrak{p}=  \nu^\frac{1}{2} \mathfrak{r}^\mathrm{R}\left(\mathfrak{m}, \mathfrak{hd}^\mathrm{R}\left(\mathrm{I}^\mathrm{R}_{\mathfrak{m}}  \left(\pi'\right)\right)\right) \text{ and }
\mathfrak{q}=\nu^{-\frac{1}{2}} \mathfrak{r}^\mathrm{L}\left(\mathfrak{n}, \mathfrak{hd}^\mathrm{L}\left(\mathrm{I}^\mathrm{R}_{\mathfrak{m}}  \left( \pi'\right)\right)\right).\] 
\end{proof}
\subsection{RdLd-matching for unitary representations}\label{sec:RdLi-match}
For the remainder of this section, we specialize to the case where $\pi$ and $\pi'$ are irreducible unitary representations. Let us fix a unitary cuspidal representation $\rho \in \mathrm{Irr^{cusp}}$. We will assume that all factors in $\pi$ and $\pi'$ lie in the same cuspidal line $\rho$; the general case (with multiple distinct cuspidal types) follows by decomposing over cuspidal lines, as the derivative and integral operations factor accordingly. Assume that $\pi$ and $\pi'$ are of the form
\begin{equation}
 \pi=L(\mathfrak{m}) \cong \pi_1(\alpha_1) \times ... \times \pi_r(\alpha_r) \text{ and }   \pi'=L(\mathfrak{n}) \cong \pi_1'(\beta_1) \times ... \times \pi_l'(\beta_l),
\end{equation}
where for each $i$ and $j$,
\[\pi_i=\pi_{\rho}(u_i,v_i) \text{ with } u_i,v_i \in \mathbb{Z}_{>0};~\pi_j'=\pi_{\rho}(u_j',v_j') \text{ with } u_j',v_j' \in \mathbb{Z}_{>0},\] and $0 \le \alpha_i,\beta_j <\frac{1}{2}$. Recall that $\pi_{\rho}(u,v)(\alpha)$ denotes the complementary series representation when $\alpha>0$ and the Speh representation itself when $\alpha=0$.

We fix an ordering of the factors of $\pi$ and $\pi'$ such that the size parameters $u_{i} + v_{i} +\alpha_{i}$ and $u_{j}' + v_{j}' +\beta_{j}$ are non-increasing, and when equality occurs, the $u_i$ (resp. $u_j'$) are non-increasing. Without loss of generality, we may assume that the largest factor in $\pi$ is $\pi_1(\alpha_1)$; i.e.,
\[u_{1} + v_{1} +\alpha_{1}\ge u_{i} + v_{i} +\alpha_{i} \text{ for all } i,\] and 
\[u_{1} + v_{1} +\alpha_{1} \ge u_{j}' + v_{j}' +\beta_{j} \text{ for all } j,\] with the understanding that if equality holds in the second inequality, we also have $u_1 \ge u_j'$.

We now establish two reduction results that allow us to “peel off” the largest factor $\pi_1(\alpha_1)$ from $\pi$ under the assumption that the pair is RdLd-matching. These results are the technical core of the proof that generalized GGP relevance implies the extended GGP relevance (Theorem \ref{thm:unitary}). The analysis splits into two cases, depending on whether the largest factor can be matched with a complementary factor in $\pi'$ in a specific way.

\subsection{Case A}\label{sec:Case A}{\bf No factor in $\pi'$  matches $\pi_1(\frac{1}{2}-\alpha_1)$.} In this case, we assume that \[\pi'_j(\beta_j) \ncong \pi_1(\frac{1}{2}-\alpha_1) \text{ for all } j=1,...,l.\] Let us define \[\pi_*=\pi_{2}(\alpha_{2}) \times ... \times \pi_r(\alpha_r)=L(\mathfrak{m}_*).\]

\begin{proposition}[Reduction in Case A]\label{prop:match_1}
Suppose $\pi'_j(\beta_j) \ncong \pi_1(\frac{1}{2}-\alpha_1)$ for all $j$. If $(\pi, \pi')$ is RdLd-matching, then  there exists an index $j_0 \in \{1,...,l\}$ such that the pair $(\pi_*, \pi'_*)$ is also RdLd-matching, where  \[\pi'_* \cong \begin{cases}
   \pi_1'(\beta_1) \times ...\times \pi_{j_0-1}'(\beta_{j_0-1})\times \pi_{j_0+1}'(\beta_{j_0+1})\times... \times \pi_l'(\beta_l)  &\mbox{ if } v_1 \ne 1\\ 
    \pi' &\mbox{ if } v_1 = 1.
\end{cases}
\]
\end{proposition}
\begin{proof}
Since $(\pi, \pi')$ is RdLd-matching, by Corollary \ref{cor:match_hd}, there exists a multisegment $\mathfrak{p}$ (resp. $\mathfrak{q}$) that consists only of top segment $\mathfrak{hd}^{\mathrm{R}}\left(\pi_\rho(u_i,v_i)\nu^{\frac{1}{2} \pm \alpha_i}\right)$ (resp. bottom segment $\mathfrak{hd}^{\mathrm{L}}\left(\pi_\rho(u_j',v_j')\nu^{\pm\beta_j}\right)$) for some $1\le i \le r$ and some $1\le j \le l$ such that
 \begin{equation} \label{eq:red_A1}
     \mathrm{D}^\mathrm{R}_\mathfrak{p}(\nu^\frac{1}{2}\pi)  \cong \mathrm{D}^\mathrm{L}_\mathfrak{q}(\pi') \ne 0 \quad (\text{equivalently } \mathcal{D}^\mathrm{R}_\mathfrak{p}(\nu^\frac{1}{2}\mathfrak{m}) = \mathcal{D}^\mathrm{L}_\mathfrak{q}(\mathfrak{n}) \ne \infty).
 \end{equation} 
We consider two subcases based on whether $\alpha_1 =0$.

Subcase A1. $\alpha_1 \ne 0$. Because $u_{1} + v_{1} +\alpha_{1}$ is maximal, the ending point
$e(\nu^{\frac{1}{2}+\alpha_1}\pi_1) \notin \mathrm{csupp}(\pi')$. By the matching condition \eqref{eq:red_A1}, $e(\nu^{\frac{1}{2}+\alpha_1}\pi_1) \notin \mathrm{csupp}\left( \mathrm{D}^\mathrm{R}_\mathfrak{p}(\nu^\frac{1}{2}\pi)\right)$ too. Therefore, the segment 
\[\Delta_1=\mathfrak{hd}^\mathrm{R}\left(\nu^{\frac{1}{2}+\alpha_1}\pi_1 \right) \in \mathfrak{p}.\] 
Applying Proposition \ref{prop:der_unitary_mult} (which describes the derivative of a Speh factor with respect to its top segment), we obtain
\begin{equation}\label{eq:red_A2}
\mathcal{D}^\mathrm{R}_{\mathfrak{p}}(\nu^\frac{1}{2} \mathfrak{m})= \mathfrak{m}^-_\rho(u_1,v_1)\nu^{\frac{1}{2}+\alpha_1} +   \mathcal{D}^\mathrm{R}_{\mathfrak{p}-\Delta_1} \left( \mathfrak{m}_\rho(u_1,v_1)\nu^{\frac{1}{2}-\alpha_1} +\nu^\frac{1}{2}\mathfrak{m}_*   \right) ,    
\end{equation}
By the matching condition \eqref{eq:red_A1},
$\mathfrak{m}^-_\rho(u_1,v_1)\nu^{\frac{1}{2}+\alpha_1}=\mathfrak{m}_\rho(u_1,v_1-1)\nu^{\alpha_1}$  is a submultisegment of $\mathcal{D}^\mathrm{L}_\mathfrak{q}(\mathfrak{n})$. 
Lemma \ref{lem:inclution} then forces either $v_1=1$ (in which case $\mathfrak{m}_\rho(u_1,v_1-1)\nu^{\alpha_1}=\emptyset$), or $v_1\ne 1$ and there exists an index $1 \le j_0\le l$ such that \[\mathfrak{m}_\rho(u_{j_0}',v_{j_0}')\nu^{\beta_{j_0}}=\mathfrak{m}_\rho(u_1,v_1-1)\nu^{\alpha_1}.\] 
If $v_1 \ne 1$, by Proposition \ref{prop:der_unitary_mult_left} we can `peel off' this factor from the left derivative:
\begin{equation}\label{eq:red_A3}
\mathcal{D}^\mathrm{L}_\mathfrak{q}(\mathfrak{n})= \mathfrak{m}_\rho(u_{j_0}',v_{j_0}')\nu^{\beta_{j_0}}+ \mathcal{D}^\mathrm{L}_\mathfrak{q}(\mathfrak{n}-\mathfrak{m}_\rho(u_{j_0}',v_{j_0}')\nu^{\beta_{j_0}}).    
\end{equation}
Therefore, combining relations \eqref{eq:red_A1}, \eqref{eq:red_A2}, and \eqref{eq:red_A3}, we have
\begin{equation}\label{eq:red_A4}
\mathcal{D}^\mathrm{R}_{\mathfrak{p}-\Delta_1} \left( \mathfrak{m}_\rho(u_1,v_1)\nu^{\frac{1}{2}-\alpha_1} +\nu^\frac{1}{2}\mathfrak{m}_*   \right) = \begin{cases}
    \mathcal{D}^\mathrm{L}_\mathfrak{q}\left(\mathfrak{n}\right) &\mbox{ if } v_1 = 1\\
    \mathcal{D}^\mathrm{L}_\mathfrak{q}\left(\mathfrak{n}-\mathfrak{m}_\rho(u_{j_0}',v_{j_0}')\nu^{\beta_{j_0}}\right) &\mbox{ if } v_1 \ne 1.
\end{cases}
\end{equation}
A further analysis using the fact that $\pi'_j(\beta_j) \ncong \pi_1(\frac{1}{2}-\alpha_1)$ for all $1 \le j \le l$ shows that the segment $\Delta_2=\mathfrak{hd}^\mathrm{R}\left(\nu^{\frac{1}{2}-\alpha_1}\pi_1 \right) $ also belongs to $ \mathfrak{p}-\Delta_1$. Applying Proposition \ref{prop:der_unitary_mult} again, we have 
\begin{equation}\label{eq:red_A5}
\mathcal{D}^\mathrm{R}_{\mathfrak{p}-\Delta_1} \left( \mathfrak{m}_\rho(u_1,v_1)\nu^{\frac{1}{2}-\alpha_1} +\nu^\frac{1}{2}\mathfrak{m}_*   \right)= \mathfrak{m}_\rho(u_1,v_1-1)\nu^{-\alpha_1} +   \mathcal{D}^\mathrm{R}_{\mathfrak{p}-\Delta_1-\Delta_2} \left(\nu^\frac{1}{2}\mathfrak{m}_*   \right).   
\end{equation}
If $v_1\ne 1$, by the matching condition \eqref{eq:red_A4}, $\mathfrak{m}_\rho(u_1,v_1-1)\nu^{-\alpha_1} $ (which is $\mathfrak{m}_\rho(u_{j_0}',v_{j_0}')\nu^{-\beta_{j_0}}$) is a submultisegment of $ \mathcal{D}^\mathrm{L}_\mathfrak{q}\left(\mathfrak{n}-\mathfrak{m}_\rho(u_{j_0}',v_{j_0}')\nu^{\beta_{j_0}}\right)$ and applying Lemma \ref{lem:inclution} and Proposition \ref{prop:der_unitary_mult_left}, we conclude 
\begin{equation}\label{eq:red_A6}
\mathcal{D}^\mathrm{L}_\mathfrak{q}\left(\mathfrak{n}-\mathfrak{m}_\rho(u_{j_0}',v_{j_0}')\nu^{\beta_{j_0}}\right)= \mathfrak{m}_\rho(u_{j_0}',v_{j_0}')\nu^{-\beta_{j_0}}+ \mathcal{D}^\mathrm{L}_\mathfrak{q}\left(\mathfrak{n}-\mathfrak{m}_\rho(u_{j_0}',v_{j_0}')\nu^{\beta_{j_0}}-\mathfrak{m}_\rho(u_{j_0}',v_{j_0}')\nu^{-\beta_{j_0}}\right).    
\end{equation}
We now set $\pi'_*=L(\mathfrak{n}_*)$ where $\mathfrak{n}_*=\mathfrak{n}-\mathfrak{m}_\rho(u_{j_0}',v_{j_0}')({\beta_{j_0}})$ if $v_1 \ne 1$ and $\mathfrak{n}_*= \mathfrak{n} $ if $ v_1=1$. Therefore, combining relations \eqref{eq:red_A4}, \eqref{eq:red_A5}, and \eqref{eq:red_A6}, we have
\begin{equation}\label{eq:red_A7}
\mathcal{D}^\mathrm{R}_{\mathfrak{p}-\Delta_1-\Delta_2} \left( \nu^\frac{1}{2}\mathfrak{m}_*   \right) = \mathcal{D}^\mathrm{L}_\mathfrak{q}\left(\mathfrak{n}_*\right), \text{ and equivalently, }  \mathrm{D}^\mathrm{R}_{\mathfrak{p}-\Delta_1-\Delta_2} \left( \nu^\frac{1}{2}\pi_*   \right) \cong \mathrm{D}^\mathrm{L}_\mathfrak{q}\left(\pi'_*\right).
\end{equation}
This establishes the RdLd-matching for $(\pi_*, \pi_*')$.

Subcase A2. $\alpha_1 = 0$. Because $u_{1} + v_{1} +\alpha_{1}$ is maximal, similar to Subcase A1, the segment $\Delta_0=\mathfrak{hd}^\mathrm{R}\left(\nu^{\frac{1}{2}}\pi_1 \right) $ lies in $ \mathfrak{p}$ and applying Proposition \ref{prop:der_unitary_mult}, we have
\begin{equation}\label{eq:red_A8}
\mathcal{D}^\mathrm{R}_{\mathfrak{p}}(\nu^\frac{1}{2} \mathfrak{m})= \mathfrak{m}_\rho(u_1,v_1-1) +   \mathcal{D}^\mathrm{R}_{\mathfrak{p}-\Delta_0} \left( \nu^\frac{1}{2}\mathfrak{m}_*   \right) ,    
\end{equation}
By the matching condition \eqref{eq:red_A1}, $\mathfrak{m}_\rho(u_1,v_1-1) $ is a submultisegment of $ \mathcal{D}^\mathrm{L}_\mathfrak{q}(\mathfrak{n})$. If $v_1 \ne 1$, appling Lemma \ref{lem:inclution}, there exists an index $1 \le j_0\le l$ such that $\mathfrak{m}_\rho(u_{j_0}',v_{j_0}')=\mathfrak{m}_\rho(u_1,v_1-1)$ and by Proposition \ref{prop:der_unitary_mult_left},
\begin{equation}\label{eq:red_A9}
\mathcal{D}^\mathrm{L}_\mathfrak{q}(\mathfrak{n})= \mathfrak{m}_\rho(u_{j_0}',v_{j_0}')+ \mathcal{D}^\mathrm{L}_\mathfrak{q}(\mathfrak{n}-\mathfrak{m}_\rho(u_{j_0}',v_{j_0}')).  
\end{equation}
Therefore, combining matching conditions \eqref{eq:red_A1}, \eqref{eq:red_A8}, and \eqref{eq:red_A9}, we conclude that \[\mathrm{D}^\mathrm{R}_{\mathfrak{p}-\Delta_0} \left( \nu^\frac{1}{2}\pi_*   \right) \cong \mathrm{D}^\mathrm{L}_\mathfrak{q}\left(\pi'_*\right),\]
where $\pi_*'$  is as defined in the proposition statement.
\end{proof}

\subsection{Case B}\label{sec:Case B}{\bf A factor in $\pi'$ matches $\pi_1(\frac{1}{2}-\alpha_1)$.} In this case, we assume that there exists an index $j_1 \in \{1,...,l\}$ such that \[\pi'_{j_1}(\beta_{j_1}) \cong \pi_1(\frac{1}{2}-\alpha_1) .\] 
Note that this forces $\alpha_1 \ne 0$ and $\beta_{j_1} \ne 0$.
\begin{proposition}[Reduction in Case B]\label{prop:match_2}
Suppose $\pi'_{j_1}(\beta_{j_1}) \cong \pi_1(\frac{1}{2}-\alpha_1)$ for some $j_1$.  If $(\pi, \pi')$ is RdLd-matching, then the pair 
$(\pi_*, \pi'_*)$ is also RdLd-matching, where \[\pi_* \cong \prod\limits_{i \ne 1} \pi_i(\alpha_i)  \text{ and }\pi'_* \cong 
   \prod\limits_{j \ne j_1} \pi_j'(\beta_j) .
\]
\end{proposition}

\begin{proof}
As in Proposition \ref{prop:match_1}, there exists a multisegment $\mathfrak{p}$ (resp. $\mathfrak{q}$) such that
 \begin{equation} \label{eq:red_B1}
     \mathcal{D}^\mathrm{R}_\mathfrak{p}(\nu^\frac{1}{2}\mathfrak{m}) = \mathcal{D}^\mathrm{L}_\mathfrak{q}(\mathfrak{n}) \ne \infty.
 \end{equation} 
and the maximality of $u_{1} + v_{1} +\alpha_{1} $ forces the segment $\Delta_1=\mathfrak{hd}^\mathrm{R}\left(\nu^{\frac{1}{2}+\alpha_1}\pi_1 \right) $ lies in $ \mathfrak{p}$. Applying Proposition \ref{prop:der_unitary_mult} gives 
\begin{equation}\label{eq:red_B2}
\mathcal{D}^\mathrm{R}_{\mathfrak{p}}(\nu^\frac{1}{2} \mathfrak{m})= \mathfrak{m}_\rho(u_1,v_1-1)\nu^{\alpha_1} +   \mathcal{D}^\mathrm{R}_{\mathfrak{p}-\Delta_1} \left( \mathfrak{m}_\rho(u_1,v_1)\nu^{\frac{1}{2}-\alpha_1} +\nu^\frac{1}{2}\mathfrak{m}_*   \right),    
\end{equation}
Now, because $\pi'_{j_1}(\beta_{j_1}) \cong \pi_1(\frac{1}{2}-\alpha_1)$, the matching condition and the maximality argument also imply that the segment $\Delta_1'=\mathfrak{hd}^\mathrm{L}\left(\pi'_{j_1}\nu^{-\beta_{j_1}} \right) $ lies in $ \mathfrak{q}$. Applying Proposition \ref{prop:der_unitary_mult_left} yields
\begin{equation}\label{eq:red_B3}
\mathcal{D}^\mathrm{L}_{\mathfrak{q}}(\mathfrak{n})= \mathfrak{m}_\rho(u_1,v_1-1)\nu^{\alpha_1} +   \mathcal{D}^\mathrm{L}_{\mathfrak{q}-\Delta_1'} \left( \mathfrak{n}_*+\mathfrak{m}_\rho(u_1,v_1)\nu^{\frac{1}{2}-\alpha_1}   \right),    
\end{equation}
where $\mathfrak{n}_*$ corresponds to $\pi_*'$. Combining the relations \eqref{eq:red_B1}, \eqref{eq:red_B2}, and \eqref{eq:red_B3}, we have
\begin{equation}\label{eq:red_B4}
    \mathcal{D}^\mathrm{R}_{\mathfrak{p}-\Delta_1} \left( \mathfrak{m}_\rho(u_1,v_1)\nu^{\frac{1}{2}-\alpha_1} +\nu^\frac{1}{2}\mathfrak{m}_*   \right) = \mathcal{D}^\mathrm{L}_{\mathfrak{q}-\Delta_1'} \left( \mathfrak{n}_* +\mathfrak{m}_\rho(u_1,v_1)\nu^{\frac{1}{2}-\alpha_1}   \right).
\end{equation}

Subcase 1. $v_1 \ne 1$. By Proposition \ref{prop:der_unitary_mult_left}, the top segment of the ladder $\mathfrak{m}_\rho(u_1,v_1)\nu^{\frac{1}{2}-\alpha_1}$ lies in the multisegment $\mathcal{D}^\mathrm{L}_{\mathfrak{q}-\Delta_1'} \left( \mathfrak{n}_* +\mathfrak{m}_\rho(u_1,v_1)\nu^{\frac{1}{2}-\alpha_1}   \right)$ and hence, applying Lemma \ref{lem:inclution_2} and relation \eqref{eq:red_B4} gives
\begin{equation}
    \mathcal{D}^\mathrm{R}_{\mathfrak{p}-\Delta_1} \left( \mathfrak{m}_\rho(u_1,v_1)\nu^{\frac{1}{2}-\alpha_1} +\nu^\frac{1}{2}\mathfrak{m}_*   \right)= \mathfrak{m}_\rho(u_1,v_1)\nu^{\frac{1}{2}-\alpha_1} + \mathcal{D}^\mathrm{R}_{\mathfrak{p}-\Delta_1} \left( \nu^\frac{1}{2}\mathfrak{m}_*   \right).
\end{equation}
Similarly, by Proposition \ref{prop:der_unitary_mult} and the bottom segment of the ladder $\mathfrak{m}_\rho(u_1,v_1)\nu^{\frac{1}{2}-\alpha_1}$, we have
\begin{equation}
    \mathcal{D}^\mathrm{L}_{\mathfrak{q}-\Delta_1'} \left( \mathfrak{n}_* +\mathfrak{m}_\rho(u_1,v_1)\nu^{\frac{1}{2}-\alpha_1}   \right)=\mathfrak{m}_\rho(u_1,v_1)\nu^{\frac{1}{2}-\alpha_1}+\mathcal{D}^\mathrm{L}_{\mathfrak{q}-\Delta_1'} \left( \mathfrak{n}_*   \right).
\end{equation}
Therefore, 
\[\mathcal{D}^\mathrm{R}_{\mathfrak{p}-\Delta_1} \left( \nu^\frac{1}{2}\mathfrak{m}_*   \right)=\mathcal{D}^\mathrm{L}_{\mathfrak{q}-\Delta_1'} \left( \mathfrak{n}_*   \right).\]

Subcase 2. $v_1 = 1$ and the segment $\Delta=\mathfrak{m}_\rho(u_1,v_1)\nu^{\frac{1}{2}-\alpha_1}$ lies in $\mathcal{D}^\mathrm{R}_{\mathfrak{p}-\Delta_1} \left( \mathfrak{m}_\rho(u_1,v_1)\nu^{\frac{1}{2}-\alpha_1} +\nu^\frac{1}{2}\mathfrak{m}_*   \right)$. Then, applying Lemma \ref{lem:inclution_2} and relation \eqref{eq:red_B4} as above, we again have 
\[\mathcal{D}^\mathrm{R}_{\mathfrak{p}-\Delta_1} \left( \nu^\frac{1}{2}\mathfrak{m}_*   \right)=\mathcal{D}^\mathrm{L}_{\mathfrak{q}-\Delta_1'} \left( \mathfrak{n}_*   \right).\]

Subcase 3. $v_1 = 1$ and the segment $\Delta=\mathfrak{m}_\rho(u_1,v_1)\nu^{\frac{1}{2}-\alpha_1}$ does not lie in $\mathcal{D}^\mathrm{R}_{\mathfrak{p}-\Delta_1} \left( \mathfrak{m}_\rho(u_1,v_1)\nu^{\frac{1}{2}-\alpha_1} +\nu^\frac{1}{2}\mathfrak{m}_*   \right)$. Then, $\Delta \in \mathfrak{p}-\Delta_1$ and by relation \eqref{eq:red_B4}, $\Delta \in \mathfrak{q}-\Delta_1'$. Therefore, Proposition \ref{prop:der_unitary_mult} and \ref{prop:der_unitary_mult_left} yield 
\[\mathcal{D}^\mathrm{R}_{\mathfrak{p}-\Delta_1-\Delta} \left( \nu^\frac{1}{2}\mathfrak{m}_*   \right)=\mathcal{D}^\mathrm{L}_{\mathfrak{q}-\Delta_1'-\Delta} \left( \mathfrak{n}_*   \right).\]
\end{proof}

\section{Unitary Gan–Gross–Prasad relevance}
In this section, we prove Theorem \ref{thm:unitary}, which establishes the equivalence between the generalized GGP relevance criterion (Definition \ref{def:relevant}) and the extended Gan–Gross–Prasad relevance criterion (Definition \ref{def:extended_relevance}) for irreducible unitary representations. This equivalence is the central theoretical result of the paper, as it bridges Chan's general characterization of quotient branching \cite{Cha_qbl} with a more explicit and easily verifiable condition in terms of the Tadić parameters of unitary representations.

We recall the setting. Let $\pi$ and $\pi'$ be irreducible unitary representations of general linear groups over a non-archimedean local field $F$. By Tadić's classification, $\pi$ and $\pi'$ are of the form
\begin{equation}\tag{$\star \star$}\label{eq:forw_1}
 \pi=L(\mathfrak{m}) \cong \pi_1(\alpha_1) \times ... \times \pi_r(\alpha_r) \text{ and }   \pi'=L(\mathfrak{n}) \cong \pi_1'(\beta_1) \times ... \times \pi_l'(\beta_l),
\end{equation}
where for each $i$ and $j$,
\[\pi_i=\pi_{\rho_i}(u_i,v_i) \text{ with } u_i,v_i \in \mathbb{Z}_{>0};~\pi_j'=\pi_{\rho_j}(u_j',v_j') \text{ with } u_j',v_j' \in \mathbb{Z}_{>0},\] and $0 \le \alpha_i,\beta_j <\frac{1}{2}$. Moreover, since derivatives and integrals factor over distinct cuspidal lines, it suffices to consider the case where all factors lie in the same cuspidal line; the general case follows by taking disjoint unions. Thus, for the remainder of this section, we assume that $\rho_i \cong \rho_j'\cong \rho$ for all $i,j$, where $\rho$ is a fixed unitary cuspidal representation.

Here, a factor of the form  $\pi_\rho(u,v)(\alpha)$ is called a generalized Steinberg representation if $v=1$ and $\alpha=0$.

\begin{theorem}
 Let $\pi$ and $\pi'$ be irreducible unitary representations of general linear groups such that $(\pi, \pi')$ is a generalized GGP relevant pair (Definition \ref{def:relevant}). Then, $\pi$ and $\pi'$ are Gan–Gross–Prasad relevance (Definition \ref{def:extended_relevance}).  
\end{theorem}

\begin{proof}
Let $\pi$ and $\pi'$ be of the form \eqref{eq:forw_1}. Let $\mathcal{N}(\pi,\pi')$ be the total number of factors $\pi_i(\alpha_i)$ and $\pi_j'(\beta_j)$ in \eqref{eq:forw_1} which are not generalized Steinberg representation. By Definition \ref{def:relevant}, $(\pi, \pi')$ is RdLd-matching. Using induction method on the number $\mathcal{N}(\pi,\pi')$, we show that RdLd-matching implies Gan–Gross–Prasad relevance.

Base case: $\mathcal{N}(\pi,\pi')=0$. In this case, all the factors $\pi_i(\alpha_i)$ and $\pi_j'(\beta_j)$ are generalized Steinberg representations meaning $v_i=v_j'=1$ and $\alpha_i=\beta_j=0$ for all $i,j$. Hence $\pi$ and $\pi'$ are generic representations. By Proposition \ref{prop:generic}, they are generalized GGP relevant, and they satisfy Definition \ref{def:extended_relevance} with $I_1=I_2=I_3=\emptyset$, $I_4=\{1,...,r\}$ and $J_4=\{1,...,l\}$. Thus the base case holds.

Inductive step: Assume $\mathcal{N}(\pi,\pi')>0$. Without loss of generality, we may order the factors of $\pi$ and $\pi'$ as in Section \ref{sec:RdLi-match}, so that the largest factor in $\pi$ is $\pi_1(\alpha_1)$ (in the sense of the size parameter $u_{1} + v_{1} +\alpha_{1}$ is maximal). If $\pi_1(\alpha_1)$ is a generalized Steinberg representation (i.e. $v_1 = 1$ and $\alpha_1=0$), we apply Proposition \ref{prop:match_1} and if needed using Lemma \ref{lem:duality}, we apply Proposition \ref{prop:match_1} repeatedly to reach a pair of unitary representations $(\tilde \pi, \tilde \pi')$ which is RdLd-matching and the largest factor is not a generalized Steinberg representation. The generalized Steinberg representations, which are peeled off in this removal process, will be adjusted either in $I_4$ or in $J_4$ of the Gan–Gross–Prasad relevance criterion. Therefore, we may also assume that the RdLd-matching representations $\pi$ and $\pi'$ in \eqref{eq:forw_1} having a non-generalized Steinberg largest factor $\pi_1(\alpha_1)$.  

Now we consider two cases, corresponding to the two reduction propositions established in Section \ref{sec:RdLi}.

Case 1: No factor in $\pi'$ is isomorphic to $\pi_1(\frac{1}{2}-\alpha_1)$. In this case, by Proposition 6.2, there exists an index $j_0$ (or possibly none when $v_1=1$) such that the reduced pair $(\pi_*, \pi'_*)$ is RdLd-matching, where
\[\pi_* \cong \prod\limits_{i \neq 1} \pi_i(\alpha_i)  \text{ and }\pi'_* \cong \begin{cases}
    \prod\limits_{j \neq j_0} \pi_j'(\beta_j) &\mbox{ if } v_1 \ne 1\\
    \pi'    &\mbox{ if } v_1 = 1.
\end{cases}\]
By construction, $\mathcal{N}(\pi_*,\pi_*')<\mathcal{N}(\pi,\pi')$. Applying the induction hypothesis, $\pi_*$ and $\pi_*'$ satisfy Definition \ref{def:extended_relevance}. By examining the removed factors, we see that $\pi$ and $\pi'$ also satisfy Definition \ref{def:extended_relevance}: the removed factor $\pi_1(\alpha_1)$ is accounted for in either $I_4$ (if $v_1=1$) or in $I_2$ (if $v_1\ne 1$).

Case 2: There exists $j_1 \in \{1,...,l\}$ such that $\pi'_{j_1}(\beta_{j_1}) \cong \pi_1(\frac{1}{2}-\alpha_1)$. In this case, by Proposition \ref{prop:match_2}, the reduced pair $(\pi_*, \pi'_*)$ is RdLd-matching, where 
\begin{equation}\label{eq:con_1}
  \pi_* \cong \prod\limits_{i \neq 1} \pi_i(\alpha_i)  \text{ and }\pi'_* \cong 
   \prod\limits_{j \neq j_1} \pi_j'(\beta_j) .  
\end{equation}
As before, $\mathcal{N}(\pi_*,\pi_*')<\mathcal{N}(\pi,\pi')$ and by the induction hypothesis, $\pi_*$ and $\pi_*'$ satisfy Definition \ref{def:extended_relevance}.  Adding back the matched pair $\left(\pi_1(\alpha_1),\pi'_{j_1}(\beta_{j_1})\right)$ corresponds precisely to the relation (R3) in Definition \ref{def:extended_relevance} with $\beta_{j_1}=\frac{1}{2}-\alpha_1$.

Thus, in all cases, the induction proceeds, and we conclude that $\pi$ and $\pi'$ are Gan–Gross–Prasad relevant as in Definition \ref{def:extended_relevance}. 
\end{proof}

\begin{theorem}
 Let $\pi$ and $\pi'$ be the irreducible unitary representations of general linear groups such that $\pi$ and $\pi'$ are Gan–Gross–Prasad relevant. Then, $(\pi, \pi')$ is a generalized GGP relevant pair. 
\end{theorem}
\begin{proof}
We assume that $\pi$ and $\pi'$ are the irreducible unitary representations of the form \eqref{eq:forw_1} satisfying the relations (R1)-(R4) of Definition \ref{def:extended_relevance}. Our goal is to construct multisegments $\mathfrak{p}$ and $\mathfrak{q}$ satisfying the two conditions of Definition \ref{def:relevant}.

If there exist $i_* \in I_4$ and $j_* \in J_4$ such that $\pi'_{j_*}(\beta_{j_*}) \cong \pi_{i_*}(\frac{1}{2}-\alpha_{i_*})$, then we extend  $I_3$ to $I_3'=I_3 \cup \{i_*\}$, $J_3$ to $J_3'=J_3 \cup \{j_*\}$ and having natural extended bijection $\lambda'_3: I_3' \rightarrow J_3'$. Also, we reduce $I_4$ to $I'_4=I_4 - \{i_*\}$ and reduce $J_4$ to $J_4'=J_4 - \{j_*\}$. By repeatedly applying the (R3) relation, we may enlarge $I_3$ to a maximal subset $I_3^\mathrm{max}$ (and correspondingly $J_3^\mathrm{max}$) and have a unique decomposition $I_3 \sqcup I_4=I_3^\mathrm{max} \sqcup I_4^\mathrm{min}$ and $J_3 \sqcup J_4=J_3^\mathrm{max} \sqcup J_4^\mathrm{min}$ such that there exists a bijection $\lambda_3^\mathrm{max}:I_3^\mathrm{max} \rightarrow J_3^\mathrm{max}$ with \[\pi'_{\lambda_3^\mathrm{max}(i)}\left(\beta_{\lambda_3^\mathrm{max}(i)}\right) \cong \pi_{i}\left(\frac{1}{2}-\alpha_{i}\right) \text { for }i \in I_3^\mathrm{max}.\] 
After this maximal matching, there are no remaining pairs $(i,j)$ with $i \in I_4^\mathrm{min}$ and $j \in J_4^\mathrm{min}$ such that $\pi'_{j}(\beta_{j}) \cong \pi_{i}(\frac{1}{2}-\alpha_{i})$.

 For each factor in $\pi$ and $\pi'$, we define contributions to $\mathfrak{p}$ and $\mathfrak{q}$ as follows:
 \begin{align*}
     \mathfrak{p}_i &= \mathfrak{hd}^\mathrm{R}(\pi_i)\nu^{\frac{1}{2}-\alpha_i} + \mathfrak{hd}^\mathrm{R}(\pi_i)\nu^{\frac{1}{2}+\alpha_i} &&\text{ and } \mathfrak{q}_{\lambda_1(i)}=\emptyset && \text{ for } i \in I_1\\
   \mathfrak{p}_i&=\emptyset   &&\text{ and } \mathfrak{q}_{\lambda_2(i)} = \mathfrak{hd}^\mathrm{L}(\pi'_{\lambda_2(i)})\nu^{-\beta_{\lambda_2(i)}} + \mathfrak{hd}^\mathrm{L}(\pi_{\lambda_2(i)}')\nu^{\beta_{\lambda_2(i)}} && \text{ for } i \in I_2\\
   \mathfrak{p}_i&=\mathfrak{hd}^\mathrm{R}(\pi_i)\nu^{\frac{1}{2}+\alpha_i}   &&\text{ and } \mathfrak{q}_{\lambda_3^\mathrm{max}(i)} = \mathfrak{hd}^\mathrm{L}(\pi'_{\lambda_3^\mathrm{max}(i)})\nu^{-\beta_{\lambda_3^\mathrm{max}(i)}}  && \text{ for } i \in I_3^\mathrm{max}
 \end{align*}
 \[\mathfrak{p}_i=\pi_i(\alpha_i) \text{ for } i \in I_4^\mathrm{min}   \text{ and } \mathfrak{q}_j=\pi_j'(\beta_j) \text{ for } j \in J_4^\mathrm{min}.\]
Finally, we define \[\mathfrak{p}=\sum\limits_{i=1}^r \mathfrak{p}_i \text{ and }\mathfrak{q}=\sum\limits_{j=1}^l \mathfrak{q}_j.\] Then, using Proposition \ref{prop:der_unitary_mult} and \ref{prop:der_unitary_mult_left}, we conclude that
\begin{equation}\label{eq:RdLi-matching}
0 \ne \mathrm{D}^\mathrm{R}_\mathfrak{p}\left(\nu^\frac{1}{2} \pi\right)    \cong \mathrm{D}^\mathrm{R}_\mathfrak{p}(\pi').    
\end{equation}

We now verify the strong RdLi-commutativity of $(\mathfrak{p}, \mathfrak{q}, \nu^\frac{1}{2}\pi)$. By Definition \ref{def:RdLi}, it suffices to check that for any $\Delta \in \mathfrak{p}$ and $\Delta' \in \mathfrak{q}$, the tuple $(\Delta,\Delta', \sigma)$ is combinatorially RdLi-commutative for the appropriate intermediate representations $\sigma$. We use the sufficient condition from Lemma \ref{lem:example}. Let $\Delta \in \mathfrak{p}$ and $\Delta' \in \mathfrak{q}$. Write \[\Delta=\left[-\frac{u-1}{2}+ \frac{v-1}{2}, \frac{u-1}{2}+ \frac{v-1}{2} \right]_\rho \nu^{\frac{1}{2}\pm \alpha} \text{ and } \Delta'=\left[-\frac{u'-1}{2}- \frac{v'-1}{2}, \frac{u'-1}{2}- \frac{v'-1}{2} \right]_\rho \nu^{\pm \beta}\] for some positive integers $u,v,u',v'$ and $0\le\alpha,\beta <\frac{1}{2}$. We put $A=\frac{u-1}{2}, A'=\frac{u'-1}{2}, x= \frac{v-1}{2},$ $ x'=\frac{v'-1}{2}$ and $-\frac{1}{2}<\gamma <\frac{1}{2}$. To analyze the relative positions of $\Delta$ and $\Delta'$, we consider the following cases:

Case 1. Either $v \neq 1$ or $v'\ne 1$: Then, either $x' \ge \frac{1}{2}$ or $x'=0$ and $x \ge \frac{1}{2}$.  Fix $x \ge \frac{1}{2}$ and $x'=0$. Then, 
\begin{align}\label{eq:s}
s(\Delta) \le s(\Delta') &\Longleftrightarrow
    -A + x + \left(\frac{1}{2} + \gamma\right) \le -A' \pm \beta \nonumber\\
    &\Longleftrightarrow -A - x - \left(\frac{1}{2} + \gamma\right)+2\left(x + \frac{1}{2} + \gamma \mp \beta\right) \le -A' +(\mp \beta) \nonumber\\
    &\Longrightarrow -A - x -\left(\frac{1}{2} + \gamma\right) < -A' +(\mp \beta), \text{ as }x + \frac{1}{2} + \gamma \mp \beta >0 \nonumber\\
    &\Longrightarrow A + x +\left(\frac{1}{2} + \gamma\right) > A' \pm \beta   \Longleftrightarrow e(\Delta) > e(\Delta').
\end{align}
Now we assume $x' \ge \frac{1}{2}$. Then, by a similar argument as for \eqref{eq:s}, we have
\begin{align*}
s(\Delta) \le s(\Delta') &\Longleftrightarrow    -A + x + \left(\frac{1}{2} + \gamma\right) \le -A' - x' \pm \beta  
    &\Longrightarrow A + x +\left(\frac{1}{2} + \gamma\right) > A' -(x'\mp \beta).
\end{align*}
Therefore, in both situations, if $s(\Delta) \le s(\Delta')$, we have $e(\Delta) > e(\Delta')$.

Case 2. $v =v'= 1$: Suppose the cuspidal supports of $\Delta$ and $\Delta'$ are in same cuspidal line and $s(\Delta) \le s(\Delta')$. If $\Delta=\left[-\frac{u-1}{2}, \frac{u-1}{2}\right]_\rho \nu^{\frac{1}{2}+\alpha}$, by a similar argument as for \eqref{eq:s}, we have
\begin{align*}
    -A +  \left(\frac{1}{2} + \alpha\right) \le -A' \pm \beta  
    &\Longrightarrow A + \left(\frac{1}{2} + \alpha \right) > A' \pm \beta.
\end{align*}
Therefore, if $\Delta=\left[-\frac{u-1}{2}, \frac{u-1}{2}\right]_\rho \nu^{\frac{1}{2}+\alpha}$ and $s(\Delta) \le s(\Delta')$, we have $e(\Delta) > e(\Delta')$. We now assume that $\Delta=\left[-\frac{u-1}{2}, \frac{u-1}{2}\right]_\rho \nu^{\frac{1}{2}-\alpha}$.  If $\Delta'=\left[-\frac{u'-1}{2}, \frac{u'-1}{2}\right]_\rho \nu^{-\beta}$, by a similar argument as for \eqref{eq:s}, we have
\begin{align*}
 s(\Delta) \le s(\Delta') \Longleftrightarrow   -A +  \left(\frac{1}{2} - \alpha\right) \le -A' - \beta  
    &\Longrightarrow A + \left(\frac{1}{2} - \alpha \right) > A' - \beta \Longleftrightarrow e(\Delta) > e(\Delta').
\end{align*}
Finally, we fix $\Delta'=\left[-\frac{u'-1}{2}, \frac{u'-1}{2}\right]_\rho \nu^{\beta}$. As the cuspidal supports of $\Delta$ and $\Delta'$ are in the same cuspidal line, for some integer $z \in \mathbb{Z}_{\ge 0}$, we have
\[-A + z+ \left(\frac{1}{2} - \alpha \right) = -A' + \beta.\] Therefore, $-A+z=-A'$ and $(\frac{1}{2} - \alpha)=\beta$. If $z \ne 0$, then $A + (\frac{1}{2} - \alpha) > A' + \beta$ and we have $e(\Delta) > e(\Delta')$. If $z=0$, we have $A+ (\frac{1}{2} - \alpha) = A' + \beta$. This happen, only when there exists some $i \in I_4^\mathrm{min}$ and $j \in J_4^\mathrm{min}$ such that $\pi_j'(\beta) \cong \pi_i(\frac{1}{2}-\alpha)$, which contradicts the maximal property of $I_3^\mathrm{max}$ and $J_3^\mathrm{max}$. Hence, for any $\Delta \in \mathfrak{p}$ and $\Delta' \in \mathfrak{q}$, one of the following holds:
\begin{itemize}
    \item The cuspidal supports of $\Delta$ and $\Delta'$ are in different cuspidal lines;
    \item $s(\Delta) > s(\Delta')$;
    \item  $e(\Delta) > e(\Delta')$.
\end{itemize}
By Lemma \ref{lem:example}, any such pair $(\Delta, \Delta')$ satisfies the strongly RdLi-commutative condition for any $\sigma$ with $\mathrm{D}^\mathrm{R}_\Delta(\sigma) \ne 0$. Applying this to the intermediate representations that appear in Definition \ref{def:RdLi} (which are obtained by successively applying derivatives and integrals), we conclude that $(\mathfrak{p}, \mathfrak{q}, \nu^\frac{1}{2}\pi)$ is a strongly RdLi-commutative triple. Thus, together with \eqref{eq:RdLi-matching}, $(\pi, \pi')$ becomes a generalized GGP relevant pair. 
\end{proof}

\appendix
\section{(written by Maxim Gurevich)}\label{app:gur}

The purpose of this appendix is to clarify and resolve an inconsistency between the statements of Theorem~\ref{main_thm:unitary} of the present paper and \cite[Theorem~5.7]{Gur}.

We emphasize again that the modified notion of Gan–Gross–Prasad relevance adopted in the present work does not coincide with the condition imposed in \cite[Theorem~5.7]{Gur}. The latter formulation did not allow for the additional case (R3) listed in Definition~\ref{def:extended_relevance}.

As a consequence, \cite[Theorem~5.7]{Gur}, as originally stated, is not correct. This failure stems from a specific gap in its proof, which went unnoticed at the time.

While the incorrect statement has not, to the best of our knowledge, been used elsewhere in the literature, it is still worthwhile to address the issue explicitly.  We also stress that \cite[Theorem~5.7]{Gur} was logically independent of the other results and arguments in \cite{Gur}, so that their validity remains unaffected.

\subsection{Clarification of the missing argument in \cite[Theorem~5.7]{Gur}}

We now describe the gap in the proof of \cite[Theorem~5.7]{Gur} in more detail.

The proof relied on the following paragraph, which served as the key step in extending the Arthur-type result \cite[Theorem~5.6]{Gur} to general unitary representations:

\begin{quote}
\textit{
``It is easy to see that given Arthur-type representations $\tau,\tau',\tau''$ and numbers $\gamma',\gamma''$ with $0< \gamma' < \gamma''<\frac12$, the representations $\tau,\tau'(\gamma'),\tau''(\gamma'')$ have pairwise disjoint supercuspidal supports. The same clearly holds for their derivatives."
}
\end{quote}

The statement quoted above is correct as it stands. However, the subsequent arguments implicitly use a different condition, namely that the supercuspidal support of $\tau'(\gamma')$ be disjoint from that of the \emph{shifted} representation $\tau''(\gamma'')\nu^{\frac12}$. The last requirement does not follow from the quoted paragraph and is not equivalent to it.

Indeed, when $\alpha+\beta=\frac12$ with $0<\alpha<\beta<\frac{1}{2}$ and $\tau=\tau'=\tau''$, it is possible for the supercuspidal support of certain (left) derivatives of
\[
\tau(\alpha)=\tau\nu^{\alpha}\times\tau\nu^{-\alpha}
\]
to overlap with that of (right) derivatives of
\[
\tau(\beta)\nu^{\frac12}=(\tau\nu^{-\alpha})\nu^{1}\times\tau\nu^{\alpha}.
\]

For instance, taking $\tau=\mathbb{1}_2$, one obtains the identity
\[
{}^{-}(\mathbb{1}_2\nu^{-\alpha}) \cong \nu^{-\alpha+1/2} \cong (\mathbb{1}_2\nu^{-\alpha+1})^{-}
\]
of $\mathrm{GL}_1(F)$-representations. This is the identity that allows for the phenomenon of Example 1 to be reconcilable with the arguments in \cite{Gur}.

\subsection{Acknowledgments}

A minimal-rank counterexample to \cite[Theorem~5.7]{Gur}, essentially corresponding to Example~\ref{ex}, was explained to me in detail by Rui Chen in June~2025. I am grateful to him for first drawing my attention to this issue. I also thank the author of the present paper for resolving the problem by establishing Theorem~\ref{main_thm:unitary}, which should now be regarded as the correct and complete formulation replacing \cite[Theorem~5.7]{Gur}. Finally, I apologize to readers for any confusion the original oversight may have caused.
%

\end{document}